\documentclass{birkjour}
\usepackage{graphics,xcolor}
\newtheorem{thm}{Theorem}[section]
\newtheorem{cor}[thm]{Corollary}
\newtheorem{lem}[thm]{Lemma}
\newtheorem{prop}[thm]{Proposition}
\theoremstyle{definition}

\theoremstyle{remark}
\newtheorem{rem}[thm]{Remark}
\numberwithin{equation}{section}

\newcommand{\al}{\alpha}

\newcommand{\la}{\lambda}

\newcommand{\cA}{\mathcal{A}}
\newcommand{\cB}{\mathcal{B}}
\newcommand{\cC}{\mathcal{C}}
\newcommand{\cD}{\mathcal{D}}

\newcommand{\cG}{\mathcal{G}}
\newcommand{\cL}{\mathcal{L}}
\newcommand{\cH}{\mathcal H}

\newcommand{\cM}{\mathcal M}
\newcommand{\cP}{\mathcal P}
\newcommand{\cW}{\mathcal W}
\newcommand{\f}{{\boldsymbol{f}}}
\newcommand{\g}{{\boldsymbol{g}}}
\newcommand{\h}{{\boldsymbol{h}}}
\newcommand{\m}{{\boldsymbol{m}}}
\newcommand{\y}{{\boldsymbol{y}}}
\newcommand{\x}{{\boldsymbol{x}}}
\newcommand{\bu}{{\boldsymbol{u}}}
\newcommand{\bxi}{{\boldsymbol{\xi}}}

\renewcommand{\L}{{\boldsymbol{L}}}
\renewcommand{\P}{{\boldsymbol{P}}}
\newcommand{\Y}{{\boldsymbol{Y}}}

\newcommand{\wh}{\widehat}
\newcommand{\ol}{\overline}
\newcommand{\hf}{\widehat f}
\newcommand{\hg}{\widehat g}
\newcommand{\tr}{{\rm tr}}
\renewcommand{\i}{{\rm i}}
\newcommand{\e}{{\rm e}}

\newcommand{\mS}{\mathcal S}
\newcommand{\mT}{\mathcal T}

\DeclareMathOperator{\diag}{diag}

\DeclareMathOperator{\dom}{dom}
\DeclareMathOperator{\ran}{ran}

\renewcommand\Im{{\rm Im}\,}


\definecolor{ao}{rgb}{0,0.5,0}
\definecolor{armygreen}{rgb}{0.29,0.33,0.13}
\definecolor{auburn}{rgb}{0.43,0.21,0.1}

\newcommand{\ct}[1]{\textcolor{black}{#1}}

\newcommand\C{\mathbb C}
\newcommand\R{\mathbb R}
\newcommand\Z{\mathbb Z}
\newcommand\N{\mathbb N}
\renewcommand\d{{\rm d}}

\begin{document}

\title[Dirac-Krein systems on star graphs]{Dirac-Krein systems on star \vspace{-2mm} graphs}%
\author[V.\ Adamyan]{V.\ Adamyan}%
\address{
Odessa National I.I.\ Mechnikov University\\
Dvoryanska 2\\ 
65082 Odessa\\ 
Ukraine}%
\email{vadamyan@onu.edu.ua}%
\author[H.\ Langer]{H.\ Langer}%
\address{
Institut f\"ur Analysis und Scientific Computing\\
Technische Universit\"at Wien\\ 
Wiedner Hauptstrasse 8-10\\
1040 Vienna\\
Austria}%
\email{heinz.langer@tuwien.ac.at}%
\author[C.\ Tretter]{C.\ Tretter}%
\address{
\parbox[t]{4.7cm}{Mathematisches Institut \quad  {\rm\&}\\ 
Universit\"at Bern\\
Sidlerstr.\ 5 \\ 
3012 Bern\\
Switzerland\vspace{2mm}} 
\parbox[t]{6cm}{
Matematiska institutionen\\
Stockholms universitet\\
106 91 Stockholm\\ 
Sweden}}
\email{tretter@math.unibe.ch} %
\author[M.\ Winklmeier]{M.\ Winklmeier}%
\address{
Department of Mathematics\\
Universidad de los Andes\\
111711 Bogot\'a\\
Colombia}%
\email{mwinklme@uniandes.edu.co}%
\date{\today}

\subjclass{81Q10, 81Q35, 47A10, 47A75.}%
\keywords{Dirac operator, Dirac-Krein system, star graph, Krein's resolvent formula, trace formula, dislocation index.}%
\dedicatory{Dedicated to Prof.\ Reinhard Mennicken on the occasion of his 80th \vspace{-2mm} birthday}%
\begin{abstract}
We study the spectrum of a self-adjoint Dirac-Krein operator with potential on a compact star graph $\mathcal G$ with a finite number $n$ of edges. 
This operator is defined by a Dirac-Krein differential expression with summable matrix potentials on each edge,
by self-adjoint boundary conditions at the outer vertices, and by a self-adjoint matching condition at the common central vertex of $\cG$.
Special attention is paid to Robin matching conditions with para\-meter $\tau \in\R\cup\{\infty\}$.
Choosing the decoupled operator with Dirichlet condition at the central vertex as a reference operator, 
we derive Krein's resolvent formula, introduce corresponding Weyl--Titchmarsh functions, study the multiplicities, dependence on $\tau$, and interlacing properties of the eigenvalues, and prove a trace formula.
Moreover, we show that, asymptotically for $R\to \infty$, the difference of the number of eigenvalues in the intervals $[0,R)$ and $[-R,0)$ deviates from some integer $\kappa_0$, which we call dislocation index, at most by $n+2$.
\vspace{-1mm}
\end{abstract}

\maketitle
\section{Introduction}

While there exists a vast literature on Schr\"odinger operators on metric graphs and their spectral properties (see e.g.\ \cite{MR2459860},  \cite{MR3013208}, and the numerous lists of references therein), 
there are only a few papers on Dirac operators on metric graphs so far (see e.g.\ \cite{MR1050469}, \cite{MR1965289}, \cite{MR2459874}, \cite{MR2533873}, \cite{MR2798800}, \cite{MR2968647}, and \cite{MR2989218}).  
This may be due to the fact that classical techniques for semi-bounded operators do not apply to Dirac operators
and so mostly only special cases such as free Dirac operators or particular boundary and/or matching conditions are treated. 

On the other hand, interest in and need for mathematical results on Dirac operators has rapidly grown in the last years
due to novel applications and experiments. They include models for electronic properties of graphene (see e.g.\ \cite{wolf14}), propagation of electromagnetic waves in graphene-like photonic crystals 
(see e.g.\ \cite{PhysRevB.91.035411}), ultracold matter in optical lattices and  
``proof-of-principle quantum simulation of the one-dimensional Dirac equation using a single trapped ion-set" 
to confirm amazing features of relativistic quantum motion like Zitterbewegung and Klein paradox (see \cite{PhysRevA.84.033601}, \cite{nature}).

In this paper, we establish some fundamental spectral properties of Dirac operators on star graphs.
Since physically relevant vertex couplings are of local nature, star graphs play a role in quantum computing \ct{and vibrations of networks} 
(see e.g.\ \cite{MR3304876}, \ct{\cite{MR3334518}}) and may be viewed as building blocks for more complicated graph geometries \ct{(see e.g.\ 
\cite{MR3400894})}; \ct{the latter} also appear as shrinking limits of thin branched manifolds (see e.g.\ \cite{MR3229166},~\cite{MR3073163}).

Here we study Dirac-Krein operators with potential on a compact star graph $\cG$ with~$n$ edges $e_j$  defined by differential \vspace{-1.75mm}  expressions
\[
-\begin{pmatrix}
  0 & -1 \\
  1 & 0
 \end{pmatrix}\frac{\d}{\d x}+
 \begin{pmatrix}
 p_j(x) & q_j(x) \\
 q_j(x) & -p_j(x) \\
 \end{pmatrix},\quad x\in e_j, \ j=1,2,\dots,n,
\vspace{-1.75mm}
\]
on the edges with real-valued $L^1$-functions $p_j$,~$q_j$, arbitrary symmetric boundary conditions at the outer vertices and a self-adjoint matching condition at the common central vertex. The main results may be summarized as follows.
First we establish Krein's resolvent formula describing the resolvents of all self-adjoint ex\-tensions $\mT$ of the corresponding minimal Dirac-Krein operator on $\cG$ with respect to the fixed decoupled extension $\mT_0$ given by Dirichlet conditions at the central vertex.  
The other two main results concern extensions $\mT_\tau$ given by Robin matching conditions at the central vertex with parameter $\tau\in\R\cup\{\infty\}$.
We establish a trace formula and we prove that the set of eigenvalues splits into two parts, simple eigenvalues that depend on the Robin parameter $\tau$ monotonically and possibly non-simple eigenvalues that are independent of $\tau$. Moreover, 
we show that the eigenvalues for different Robin parameters $\tau_1$ and $\tau_2$ interlace. Thirdly we prove that, asymptotically for $R\to \infty$, the difference $d_R(\mT_\tau)$ of the number of eigenvalues of $\mT_\tau$ in intervals $[0,R)$ 
and $[-R,0)$ differs from some fixed integer $\kappa_0$, which we call dislocation index, at most by~$n+2$; an even more precise estimate is given in terms of the boundary coefficients at the outer vertices.

A short synopsis of the paper is as follows. In Section \ref{sec:symmvertex} we introduce the $L^2$-spaces for the  graph $\cG$  and its edges $e_j=[v,v_j]$, the boundary conditions at the outer vertices $v_j$, 
the general form of the matching condition at the central vertex $v$, and the corresponding symmetric and self-adjoint operators. The boundary conditions $\cos\alpha_j\,f_j(v_j)+\sin \alpha_j \,\hf_j(v_j)=0$
at the outer vertices are considered to be fixed in this paper, whereas the matching condition at $v$ is varied;
here $f_j$, $\wh f_j$ are the two components of a vector function $\f_j = (f_j\, \wh f_j)^{\rm t}$ on an edge $e_j$.
Special attention is paid to matching condition of Robin type with parameter \vspace{-1mm}$\tau \in\R\cup \{\infty\}$,~i.e.\
\[
f_1(v)= f_2(v) = \cdots = f_n(v),\quad   f_1(v)= \tau \big( \wh f_1(v)+\wh f_2(v)+\cdots+\wh f_n(v) \big). 
\vspace{-1mm}
\]

In Section \ref{sec:CanonicalSystems} we summarize results for a Dirac-Krein operator  $\mT_j$ on a single edge $e_j$ (i.e.\ on an interval). Here the Weyl--Titchmarsh function $m_j$ is introduced and 
it is shown that it is a Nevanlinna function, i.e.\ it is holomorphic at least in 
$\C\setminus\R$, 
$m_j(\ol z)=\ol{m_j(z)}$, and $\Im m_j(z)/\Im z \ge 0$ for $z\in\C\setminus\R$.
In this section we also derive a trace formula, i.e.\ an analytic expression for the trace of the resolvent difference $(\mT_j-z_1)^{-1}-(\mT_j-z_2)^{-1}$ in two points $z_1$, $z_2 \in \rho(\mT_j)$.

The first main result of the paper is  Theorem \ref{thm:kres} in Section \ref{s4} where we describe the resolvents of the self-adjoint extensions 
$\mT_{\cA,\cB}$ of the minimal operator $\mS$ on $\cG$ through M.G.~Krein's resolvent formula in terms of the boundary matrices in
$\cA (f_j(v))_1^n + \cB (\wh f_j(v))_1^n =0$. It is the starting point for the study of the spectrum in the following sections. 
We also indicate how the Fourier expansion and the spectral representation for the operator $\mT_{\cA,\cB}$ follow (see Remark \ref{fourier}), 
but these questions are not considered in detail~here.

Starting from Section \ref{sec:tau}, we concentrate on Robin matching conditions at the central vertex. We show that the zeros of the Weyl-Titchmarsh \vspace{-1mm} function 
\[
   \m_\tau(z):= \dfrac 1\tau + \sum_{j=1}^n m_j(z), \quad z\in \C\setminus \R,
\vspace{-1mm}
\]
give rise to the simple eigenvalues of $\mT_\tau$, while possibly multiple eigenvalues occur if at least two $m_j$ have a common pole.
A corresponding result for symmetric relations on star graphs, but with $\tau=\infty$, was proved in \cite{MR3180877}.
We also study the monotonicity in $\tau$ of both types of eigenvalues, 
we prove an interlacing principle for the eigenvalues of the self-adjoint operators $\mT_\tau$ for two different values of $\tau$, and 
we derive a formula for the trace of the difference of the resolvents of the self-adjoint operators $\mT_\tau$ and $\mT_0$, i.e.\ for the perturbation determinant of $\mT_\tau$ with respect to $\mT_0$.

While 
most of the results of Sections \ref{sec:symmvertex}-\ref{sec:tau} remain valid -~with slight modifications~-  for more general canonical systems, 
this is not the case for the main theorem of Section \ref{sec:disloc} where we study the difference $d_R(\mT_\tau)$ of the number of eigenvalues of $\mT_\tau$ in intervals $[0,R)$ and $[-R,0)$. 
We prove that there exists a number $\kappa_0\in\N$, the dislocation index, such that, for $R>0$ sufficiently large,  $d_R(\mT_\tau)$ differs from $\kappa_0$ at most by $n+2$.
More precisely,  
\[
 -(n_\ge + 2 ) \le d_R(\mT_\tau) - \kappa_0 \le n_\le + 2
\]
where $n_\ge$ and $n_{\le}$ are the numbers of edges for which the parameters $\al_j$ in the boundary conditions at the outer vertices satisfy $\alpha_j \ge \frac \pi2$ and $\alpha_j \le \frac \pi2$, respectively.  
The proof is based on the asymptotic behavior of the eigenvalues of Dirac-Krein operators on an interval established e.g.~in \cite{LM14} (see also \cite{MR3340175}) and on the interlacing principle proved 
in Section \ref{sec:tau}. Finally, we derive an analytic formula for the dislocation index $\kappa_0$ of $\mT_\tau$ in Theorem \ref{finally!} which is based on  the trace formula for $T_j$  in Section~\ref{sec:CanonicalSystems}.

\section{Symmetric vertex conditions}%
\label{sec:symmvertex}

Consider a star graph $\mathcal G$ with central vertex $v$ and edges $ e_1,\dots,e_n$ 
whose endpoints are denoted by $v_1,\dots,v_n$. On each edge $e_j$ we fix the direction to 
be outgoing from the central vertex $v$, i.e.\ we identify $e_j$ with the interval $[v,v_j]$. We denote by $\L^2(e_j)=L^2(e_j,\C)\oplus L^2(e_j,\C)$ 
the Hilbert space of square-integrable $2$-vector functions $\f_j\!=\!\big( f_j\ \hf_j \big)^{\rm t}\!$ 
on $e_j$ with inner product
\[
(\f_j,\g_j)_{\L^2(e_j) }\!:=\!(f_j,g_j)\!+\!(\hf_j,\hg_j),
\quad \f_j,\g_j\in \L^2(e_j),
\]
and corresponding norm $\|\cdot\|_{\L^{2}(e_j)}$.
Then the vector space $\cH=\L^{2}(\cG):= \L^2(e_1)\oplus \L^2(e_2)\oplus\cdots\oplus \L^2(e_n)$
of functions $\f=\begin{pmatrix}\f_1 \ \f_2 \ \cdots \ \f_n \end{pmatrix}^{\rm t}=:\big(\f_j\big)_1^n$
becomes a Hilbert space if we equip it with inner product given by
\[
(\f,\g)_{\L^2(\cG)}\!:=\!\sum_{j=1}^n\left((f_j,g_j)\!+\!(\hf_j,\hg_j)\right),\quad
\f,\g\in \L^2(\cG),
\]
and corresponding norm $\|\cdot\|_{\L^{2}(\mathcal G)}$.
The variables on $e_j$ are denoted by $x_j$ and those on $\cG$ by $\x\!=\!(x_j)_1^n$, i.e.\ we identify $\f(\x) \!=\! ( \f_j(x_j ) )_1^n$ for $\x\!=\!(x_j)_1^n$ with $x_j\!\in\! e_j$; 
in the particular case $x_j=v$, $j=1,2$, $\dots,n$, we simply write $\f(v)$.

For fixed $j\in \{1,2,\dots,n\}$,  we consider a so-called Dirac-Krein differential expression
\begin{equation}\label{DK}
   H_j=-J\dfrac{\d}{\d x_j }+V_j \quad \text{on } \ e_j=[v,v_j]
\end{equation}
where
\begin{equation}
\label{JVj}
  J:=
   \begin{pmatrix}
      0 & -1 \\
      1 & 0
   \end{pmatrix},\quad V_j(x_j ):=
\begin{pmatrix}
 p_j(x_j ) & q_j(x_j ) \\
 q_j(x_j ) & -p_j(x_j )
 \end{pmatrix}, \ \ x_j \in e_j,
\end{equation}
with $p_j$,  $q_j \in L^1(e_j,\R)$, $j=1,2,\dots,n$. 

With the  differential expression  $H_j$ in \eqref{DK}, which is regular at both end-points, we associate various operators 
in $\L^2(e_j)$. At the outer endpoint $v_j$ of $e_j$ we always fix the self-adjoint boundary condition 
\begin{equation}
\label{bc_j}
  \cos\alpha_j\,f_j(v_j)+\sin \alpha_j \,\hf_j(v_j)=0 \quad \text{with } \ \alpha_j\in [0,\pi)
\end{equation}
and introduce the corresponding maximal domain
\[
 \cD_j := \{ \f_j\in \L^2(e_j) : \f_j\ \text{abs.\ cont.},\ H_j\f_j\in \L^2(e_j),\ \f_j \ \text{satisfies } \eqref{bc_j} \}.
\]
The operators $S_j$ and $T_j$ in $\L^2(e_j)$ are defined by different boundary conditions at the central vertex $v$,
\begin{equation}
\label{Tj}
\begin{array}{ll}
   \dom S_j := \{ \f_j\in \mathcal \cD_j : f_j(v) = \widehat f_j(v) = 0\}, \quad & S_j \f_j := H_j \f_j,
   \\[1mm]
   \dom T_j := \{ \f_j\in \mathcal \cD_j : f_j(v) = 0\}, \quad & T_j \f_j := H_j \f_j.
\end{array}
\end{equation}

Then $S_j$ is symmetric with defect index $(1,1)$ and adjoint 
\[
\dom S_j^* = \cD_j,  \quad S_j^* \f_j = H_j \f_j,
\]
and $T_j$ is a self-adjoint extension of $S_j$, 
\[
S_j\subset T_j = T_j^* \subset S_j^*.
\]

For $\f_j$, $\g_j\in\dom S^*_j$, using $J^*=-J$ we have
\begin{align*}
&(S^*_j\f_j,\g_j)-(\f_j,S^*_j\g_j)\\
&=\int_{e_j}\biggl(\biggl(-J\binom{f_j'(x_j )}{{\wh f_j}^{\,\prime}(x_j )},\binom{g_j(x_j )}{{\widehat{g_j}(x_j )}}\biggr)_{\!\!\C^2}
  -\left(J\binom{f_j(x_j )}{\hf_j(x_j )},\binom{g_j'(x_j )}{{\wh{g_j}'(x_j )}}\biggr)_{\!\!\C^2}\right)\,\d x_j \\
&=- \big( \hf_j(v)\ol{g_j(v)}-f_j(v)\ol{\widehat{g_j}(v)} \big) \\[1mm]
&=\left( J \f_j(v),\g_j(v)\right)_{\C^2}.
\end{align*}

On the  graph $\cG$ we study the self-adjoint extensions of the symmetric \vspace{-1mm}operator 
\begin{align}
\label{symmS}
\mS:=S_1\oplus S_2\oplus\cdots\oplus S_n
\end{align}
in $\L^2(\cG)$, i.e.\ the self-adjoint restrictions of the adjoint $\mS^*\!=\!S^*_1\oplus S^*_2\oplus\cdots\oplus S^*_n$.
To this end, it is more advantageous to reorder the $2n$ components of $\L^2(\cG)$. 
Thus,  for $\bu \!=\!\big(u_1\ \wh u_1\ u_2\ \wh u_2\, \cdots\,  u_n\, \wh u_n\big)^{\rm t}\!\in\!\mathbb C^{2n}$ 
we define $\bu ^\sharp \!\!\in\! \C^{2n}$ by
\begin{equation}
\label{perm}
\bu ^\sharp 
\!:=\!\big(u_1\ u_2\ \cdots\ u_n\ \wh u_1\ \wh u_2\ \cdots\ \wh u_n\big)^{\rm t} 
= \P \bu , 
\end{equation}
where $\P = \big( p_{ij} \big)_{i,j=1}^{2n} \in M_{2n}(\C)$ is the permutation matrix given by
\[
    p_{ij} \!:=\! 
    \begin{cases}  
    \begin{array}{ll} \!\!1, & j=2i-1, \\ \!\!0, & j \in \{1,2,\dots,2n\}\!\setminus\!\{2i-1\}, \end{array} & \hspace{-3mm} i=1,2,\dots,n,  \\
    \begin{array}{ll} \!\!1, & j=2(i-n), \\ \!\!0, & j \in \{1,2,\dots,2n\}\!\setminus\!\{2(i\!-\!n)\}, \end{array} & \hspace{-3mm} i=n\!+\!1,n\!+\!2,\dots,2n;
    \end{cases}
\]
note that $\P^{-1}=\P^{\rm t}$. Correspondingly, for $\f \!\in\! \L^2(\cG)$ we set
\begin{equation}
\label{fsharp}
   \f^\sharp (\x) \!:=\! \f(\x)^\sharp, \quad \x \!\in\! \cG.
\end{equation}
Further, we denote the components of $\bu^\sharp$ and $\f^\sharp$ with respect to the decompositions $\C^{2n}\!=\!\C^n\oplus \C^n$ and $\L^2(\cG) \!=\! \L^2(e_j)^n \oplus \L^2(e_j)^n$, 
respectively,~by
\begin{equation}
\label{perm1}
\bu ^\sharp \!=\!\begin{pmatrix}\bu ^\sharp_1\\ \bu _2^\sharp\end{pmatrix} \!:=\! \begin{pmatrix} (u_i)_1^n \\ (\wh u_i)_1^n \end{pmatrix}, \quad 
\f^\sharp \!=\! \begin{pmatrix}{\f}_1^\sharp\\ {\f}_2^\sharp \end{pmatrix} \!:=\!  \begin{pmatrix} (f_i)_1^n \\ (\wh f_i)_1^n \end{pmatrix}.
\end{equation}

With this notation, for $\f=(\f_j)_1^n$, $\g=(\g_j)_1^n \in \dom \mS^* = \bigoplus_1^n \dom S_j^*$, we~obtain
\begin{equation*}
\hspace{-4mm}
\begin{array}{rl}
(\mS^*\f,\g)\!-\!(\f,\mS^*\g)\!\!\!\!
&=\!\displaystyle \sum_{j=1}^n (S^*_j\f_j,\g_j)-\!\sum_{j=1}^n(\f_j,S^*_j\g_j)\!
=\!\displaystyle\sum_{j=1}^n\left( J \f_j(v),\g_j(v)\right)_{\C^2} \hspace{-5mm} \\
& = - \big( ( \f_2^\sharp(v) , \g_1^\sharp(v) )_{\C^n} - (\f_1^\sharp(v), \g_2^\sharp(v))_{\C^n} \big) \\[2mm]
&=\left({\bf J}_{2n}\f^\sharp(v),\g^\sharp(v)\right)_{\C^{2n}}
\end{array}
\end{equation*}
with 
\[
{\bf J}_{2n}:=\begin{pmatrix}0_n&-I_n\\I_n&0_n\end{pmatrix} = \P \,\diag ( \underbrace{J, \dots, J}_{n \ \text{times}}) \, \P^{\rm t},
\]
where $I_n$, $0_n \in M_n(\C)$ are the unit and zero matrix, respectively. 

It is well-known that the self-adjoint extensions $\mT$ of $\mS$ in $\L^2(\cG)$ can be described as follows (comp.\ \cite[Appendix~II.3]{MR1255973})). 
If we denote $\psi: \dom \mS^* \to \C^{2n}$, $\psi(\f):= \f^\sharp(v)$, then $\mS \subset \mT = \mT^* \subset \mS^*$ if and only if
$\psi(\dom \mT)$ is a maximal ${\bf J}_{2n}$-neutral subspace of~$\C^{2n}$. 
Every such subspace is of the form 
$\cL_{\cA,\cB} = \{ \bu ^\sharp = \big( \bu _1^\sharp \ \bu _2^\sharp \big)^{\rm t} \in \C^n\oplus\C^n : \cA \bu _1^\sharp + \cB \bu _2^\sharp=0\}$ 
with a pair of matrices $\cA,\,\cB \in M_n(\C)$ such that
\begin{equation}
\label{AB}
 {\rm rank}\,\big(\cA\ \ \cB\big)=n,\quad \cA \cB^*=\cB\cA^*,
\end{equation}
(see e.g.\ \cite[Lemma 2.2]{MR1671833}). This proves the following.

\begin{prop}
\label{sa}
The self-adjoint extensions of $\,\mS$ in $\L^2(\cG)$ are the restrictions $\mT_{\cA,\cB}$ of $\,\mS^*$ by 
a vertex condition 
\begin{equation}
\label{Bc}
   \cA\,\f_1^\sharp(v) + \cB \f_2^\sharp(v)=0
\end{equation}
with $\cA,\,\cB \in M_n(\C)$ satisfying \eqref{AB}. 
\end{prop}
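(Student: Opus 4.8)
The plan is to verify the two directions of the characterization of self-adjoint extensions via the abstract boundary triple $(\C^{2n},\psi_1,\psi_2)$ where $\psi_1(\f):=\f_1^\sharp(v)$, $\psi_2(\f):=\f_2^\sharp(v)$, so that the Green-type identity already derived in the excerpt reads
\[
(\mS^*\f,\g)-(\f,\mS^*\g)=({\bf J}_{2n}\psi(\f),\psi(\g))_{\C^{2n}}
=(\psi_2(\f),\psi_1(\g))_{\C^n}-(\psi_1(\f),\psi_2(\g))_{\C^n},
\]
wait, up to the sign recorded above; in any case $\psi=(\psi_1\ \psi_2)^{\rm t}$ is the relevant boundary map. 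First I would check that $\psi:\dom\mS^*\to\C^{2n}$ is surjective: on each edge $S_j^*$ has defect index $(1,1)$ and $\f_j\mapsto\f_j(v)$ is onto $\C^2$ (one can exhibit explicit $\f_j\in\cD_j$ with prescribed value at $v$, e.g.\ by solving $H_j\f_j=0$ with the outer boundary condition \eqref{bc_j} relaxed on a small collar, or by a partition-of-unity cutoff near $v$), hence $\psi$ is onto $\C^{2n}$; together with the Green identity this shows $(\C^{2n},\psi_1,\psi_2)$ is a boundary triple for $\mS^*$, and its standard theory gives that $\mS\subset\mT=\mT^*\subset\mS^*$ iff $\mT=\mS^*\!\restriction\ker(\psi$ restricted to a maximal ${\bf J}_{2n}$-neutral subspace$)$, i.e.\ iff $\psi(\dom\mT)$ is a maximal ${\bf J}_{2n}$-neutral (equivalently Lagrangian) subspace of $\C^{2n}$.

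Next I would recall the linear-algebra fact, quoted in the excerpt from \cite[Lemma 2.2]{MR1671833}, that every maximal ${\bf J}_{2n}$-neutral subspace of $\C^{2n}=\C^n\oplus\C^n$ has the form
\[
\cL_{\cA,\cB}=\{(\bu_1^\sharp\ \bu_2^\sharp)^{\rm t}\in\C^n\oplus\C^n:\cA\bu_1^\sharp+\cB\bu_2^\sharp=0\}
\]
for some $\cA,\cB\in M_n(\C)$ with $\rank(\cA\ \cB)=n$ and $\cA\cB^*=\cB\cA^*$, and conversely every such pair yields a maximal neutral subspace; moreover $\cL_{\cA,\cB}=\cL_{\cA',\cB'}$ iff $(\cA'\ \cB')=C(\cA\ \cB)$ for some invertible $C\in M_n(\C)$. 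Combining this with the boundary-triple description, $\mT_{\cA,\cB}:=\mS^*\!\restriction\{\f\in\dom\mS^*:\cA\f_1^\sharp(v)+\cB\f_2^\sharp(v)=0\}$ is self-adjoint for every admissible pair $(\cA,\cB)$, and every self-adjoint extension of $\mS$ arises this way; the rank condition guarantees the $n$ scalar equations in \eqref{Bc} are independent, so the extension is proper (neither $\mS$ nor $\mS^*$ unless $n$ degenerates), while $\cA\cB^*=\cB\cA^*$ is exactly neutrality of $\cL_{\cA,\cB}$ under ${\bf J}_{2n}$. This is precisely the assertion of Proposition~\ref{sa}.

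The only point requiring genuine care—the main obstacle—is the surjectivity of the boundary map $\psi$, since everything else is either the already-displayed Green identity or the cited linear-algebra lemma \cite[Lemma 2.2]{MR1671833}. Surjectivity is equivalent to the statement $\dim(\dom\mS^*/\dom\mS)=2n$, which follows from $\mS=\bigoplus_1^n S_j$ and $\dim(\dom S_j^*/\dom S_j)=2$ (defect index $(1,1)$ of $S_j$, already recorded in the excerpt); to see that $\psi$ separates these $2n$ dimensions one notes that, on the edge $e_j$, the two-dimensional quotient $\cD_j/\dom S_j$ is mapped \emph{isomorphically} onto $\C^2$ by $\f_j\mapsto\f_j(v)$, because $\f_j(v)=0$ forces $\f_j\in\dom S_j$ by the definition \eqref{Tj} of $S_j$, and surjectivity onto $\C^2$ is obtained by picking any $\f_j\in\cD_j$ with $\f_j$ equal near $v$ to a constant-coefficient combination of the columns of a fundamental solution of $H_j\f_j=0$ and smoothly cut off before reaching $v_j$. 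Assembling these edgewise isomorphisms and applying the permutation $\P$ gives that $\psi:\dom\mS^*\to\C^{2n}$ is onto, completing the verification that $(\C^{2n},\psi_1,\psi_2)$ is a boundary triple and hence the proof of Proposition~\ref{sa}.
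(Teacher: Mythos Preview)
Your proposal is correct and follows essentially the same route as the paper: the Green identity yields the ${\bf J}_{2n}$-form on boundary values, self-adjoint extensions correspond to maximal ${\bf J}_{2n}$-neutral subspaces (the paper cites \cite[Appendix~II.3]{MR1255973} for this, you phrase it via boundary triples), and these subspaces are parametrized by pairs $(\cA,\cB)$ satisfying \eqref{AB} via the same cited lemma \cite[Lemma~2.2]{MR1671833}. The only difference is that you make explicit the surjectivity of $\psi$---which the paper absorbs into its citation---and your argument for it (defect index $(1,1)$ of each $S_j$ plus the observation that $\f_j\mapsto\f_j(v)$ vanishes precisely on $\dom S_j$, hence induces an isomorphism $\cD_j/\dom S_j\to\C^2$) is clean and correct.
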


Note that the case $\cA=I_n$, $\cB = 0_n$ corresponds to Dirichlet conditions $\f^\sharp_1(v)=0$ 
at the central vertex and hence the corresponding self-adjoint extension decouples, 
\begin{equation}
\label{T0}
\mT_0:= \mT_{I_n,0_n} = T_1 \oplus T_2 \oplus \cdots \oplus T_n. 
\end{equation}

Particular attention will be paid to the self-adjoint extensions $\mT_\tau$ given by Robin matching conditions at the central vertex~$v$,
\begin{align}
\label{k1}
f_1(v)&= f_2(v) = \cdots = f_n(v),\\
\label{k2}
f_1(v)&=
\tau \big( \wh f_1(v)+\wh f_2(v)+\cdots+\wh f_n(v) \big) \quad \text{with } \ \tau \in \R \cup \{\pm\infty\}.
\end{align} 
Here, for $\tau \ne \pm \infty$, the matrices $\cA$, $\cB$ may be chosen as
\begin{equation}
\label{Kirch}
\cA_\tau\!:=\!\begin{pmatrix}1&\!\!-1\!\!&0&\cdots&0&0\\
0&1&\!\!-1\!\!&\cdots&0&0\\
\vdots&\vdots&\vdots&&\vdots&\vdots\\
0&0&0&\cdots&1&\!\!-1\!\\
\!-1\!\!&0&0&\cdots&0&0\end{pmatrix}\!,\quad
\!\cB_\tau\!:=\tau\begin{pmatrix}0&0&\cdots&0\\
0&0&\cdots&0\\
\vdots&\vdots&&\vdots\\
0&0&\cdots&0\\
1&1&\cdots&1\end{pmatrix}\!.
\end{equation}
Note that, in accordance with the notation in \eqref{T0}, $\tau=0$ amounts to Dirichlet conditions $\f^\sharp_1(v)=0$ at $v$,
\begin{align}
\nonumber 
&f_1(v)=f_2(v)=\cdots=f_n(v)=0, \\[-1.75mm]
\intertext{while $\tau=\infty$ and $\tau=-\infty$ both amount to \vspace{-0.75mm}} 
\label{suneclipse}
&f_1(v)=f_2(v)=\cdots=f_n(v),\qquad \wh f_1(v)+\wh f_2(v)+\cdots+\wh f_n(v)=0
\end{align}
with corresponding matrices $\cA_{\pm\infty}$, $\cB_{\pm\infty}$.
We mention that, for Schr\"odinger operators, conditions of the form \eqref{suneclipse} with $\wh f_j$ 
replaced by $f_j'$ are called standard matching or Kirchhoff conditions; 
here these names may be less appropriate since the two components have equal roles and may be exchanged.

\smallskip

Later we need the following well-known lemma.

\begin{lem}\label{lemma1}
Let $\cA,\,\cB \in M_n(\C)$ be such that
\begin{equation}\label{lem1}
{\rm rank}\,(\cA\ \cB)=n,\quad \cA \cB^*-\cB\cA^*=0_n.
\end{equation}
Then there exist $\cC,\,\cD \in M_n(\C)$ such that
\begin{equation}\label{lem12}
{\rm rank}\,(\cC\ \cD)=n,\quad \cC\cD^*-\cD\cC^*=0_n,
\end{equation}
and
\begin{equation}\label{orth1}
 \cB \cC^*-\cA \cD^*=I_n.
\end{equation}
\end{lem}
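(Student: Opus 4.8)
The plan is to interpret the hypothesis \eqref{lem1} as saying that the $n\times 2n$ block row $(\cA\ \cB)$ spans a maximal ${\bf J}_{2n}$-neutral (Lagrangian) subspace of $\C^{2n}$, and the conclusion \eqref{lem12}–\eqref{orth1} as saying that we can complete this to a ``symplectic basis''. Concretely, the symmetry condition $\cA\cB^* = \cB\cA^*$ together with ${\rm rank}(\cA\ \cB)=n$ means precisely that the $2n\times 2n$ matrix
\[
 M := \begin{pmatrix} \cA & \cB \\ \cC & \cD \end{pmatrix}
\]
should satisfy $M\, {\bf J}_{2n}\, M^* = {\bf J}_{2n}$ (up to the sign convention baked into \eqref{orth1}); the three displayed identities \eqref{lem12}, \eqref{orth1} are exactly the three block equations of $M\,{\bf J}_{2n}\,M^* = {\bf J}_{2n}$, since the $(1,1)$-block gives $\cA\cB^*-\cB\cA^*=0$, the $(2,2)$-block gives $\cC\cD^*-\cD\cC^*=0$, and the $(1,2)$-block gives $\cB\cC^* - \cA\cD^* = I_n$ (with the $(2,1)$-block its adjoint). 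So the lemma is the statement that any ``isotropic half'' of a symplectic-type form can be completed to a full basis realizing the form.

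The key steps I would carry out: First, since ${\rm rank}(\cA\ \cB)=n$, the $n$ rows of $(\cA\ \cB)$ are linearly independent, so there exist $\cC,\cD\in M_n(\C)$ such that the $2n\times 2n$ matrix $M$ above is invertible; this is just completing a linearly independent set to a basis. Second, I would use the invertibility of $M$ and the nondegeneracy of ${\bf J}_{2n}$ to adjust the choice of $\cC,\cD$. Set $\cG := M\,{\bf J}_{2n}\,M^*$; since ${\bf J}_{2n}$ is invertible skew-selfadjoint ($\,{\bf J}_{2n}^* = -{\bf J}_{2n}$, ${\bf J}_{2n}^2 = -I_{2n}$) and $M$ is invertible, $\cG$ is invertible and skew-selfadjoint, $\cG = \begin{pmatrix} 0_n & \cG_{12} \\ -\cG_{12}^* & \cG_{22}\end{pmatrix}$ where the $(1,1)$-block vanishes by hypothesis \eqref{lem1} and $\cG_{22}^* = -\cG_{22}$; moreover $\cG_{12}$ is invertible (otherwise $\cG$ would be singular). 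Now replace the lower block of $M$ by $\big(\cC'\ \cD'\big) := \cG_{12}^{-1}\big(\cC\ \cD\big) - \tfrac12\,\cG_{12}^{-1}\cG_{22}\,\cG_{12}^{-*}\big(\cA\ \cB\big)$ (a standard Gram–Schmidt-type symplectic correction); a direct block computation shows the new $M' = \begin{pmatrix}\cA & \cB\\ \cC' & \cD'\end{pmatrix}$ satisfies $M'\,{\bf J}_{2n}\,M'^* = \begin{pmatrix} 0_n & I_n \\ -I_n & 0_n\end{pmatrix}$, which unpacks into exactly \eqref{lem12} and \eqref{orth1}. Third, I would read off ${\rm rank}(\cC'\ \cD')=n$ from the fact that $M'$ is still invertible (being a product/combination of invertible operations applied to the invertible $M$).

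I expect the only mildly delicate point to be keeping the sign and conjugation conventions straight so that the block equations of $M'\,{\bf J}_{2n}\,M'^* = \begin{pmatrix}0_n & I_n\\ -I_n & 0_n\end{pmatrix}$ line up with \eqref{lem12}–\eqref{orth1} with the correct signs; everything else is routine linear algebra over $\C$, and there is no analytic content. An alternative, perhaps cleaner route avoiding the explicit correction formula: observe that $(\cA\ \cB)$ spanning a Lagrangian subspace $\cL$ of $(\C^{2n},{\bf J}_{2n})$ means $\cL$ is isotropic of maximal dimension $n$; pick any Lagrangian complement $\cL'$ (these always exist, e.g.\ by Witt's theorem / the standard fact that $\mathrm{Sp}$ acts transitively on Lagrangians, or simply because ${\bf J}_{2n}$ restricted to $\C^{2n}/\cL \cong \cL^*$ identifies a complement); choosing a basis of $\cL'$ and normalizing it against the fixed basis of $\cL$ via the (nondegenerate) pairing induced by ${\bf J}_{2n}$ produces rows $(\cC\ \cD)$ spanning $\cL'$ with $\cB\cC^*-\cA\cD^* = I_n$ and, after a further isotropic adjustment within $\cL'$, also $\cC\cD^* = \cD\cC^*$. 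I would present the first, computational version since it is self-contained and short, relegating the conceptual symplectic picture to a one-line remark.
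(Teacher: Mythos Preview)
Your approach is correct in outline, but it is considerably more elaborate than the paper's. The paper simply sets $Q := \cA\cA^* + \cB\cB^*$, notes that $Q$ is Hermitian positive definite (hence invertible) because ${\rm rank}\,(\cA\ \cB)=n$, and checks directly that $\cC := Q^{-1}\cB$, $\cD := -Q^{-1}\cA$ work: one has $\cB\cC^* - \cA\cD^* = (\cB\cB^* + \cA\cA^*)Q^{-1} = I_n$, $\cC\cD^* - \cD\cC^* = -Q^{-1}(\cB\cA^* - \cA\cB^*)Q^{-1} = 0$, and ${\rm rank}\,(\cC\ \cD) = {\rm rank}\,(Q^{-1}(\cB\ -\!\cA)) = n$. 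No basis completion or Gram--Schmidt is needed.

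Your symplectic Gram--Schmidt route has the merit of explaining \emph{why} such $\cC,\cD$ exist (any Lagrangian admits a Lagrangian complement) and of generalising to arbitrary symplectic spaces; the paper's trick instead exploits the ambient Euclidean structure of $\C^{2n}$ to write the answer down in closed form, which is shorter and is the version one actually uses later. One small slip to watch: in your correction formula the roles of $\cG_{12}^{-1}$ and $\cG_{12}^{-*}$ are interchanged. Since the $(1,2)$-block of $M'\,{\bf J}_{2n}\,M'^{*}$ after a left multiplication $(\cC'\ \cD') = X(\cC\ \cD)$ becomes $\cG_{12}X^*$, the normalisation requires $X = \cG_{12}^{-*}$, not $\cG_{12}^{-1}$; the second term must be adjusted accordingly. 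You anticipated exactly this kind of bookkeeping issue, and once fixed the argument goes through.
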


\begin{proof}
If we set $\mathcal Q:=\cA\cA^*+\cB\cB^*$, it is easy to check that all conditions are satisfied e.g.\ for the matrices 
$$
\cC:=Q^{-1}\cB, \quad  \cD:=- Q^{-1}\cA.
\eqno\qedhere
$$
\end{proof}

\smallskip

Clearly, the conditions $\cA \cB^*\!-\cB\cA^*\!=\cC\cD^*-\cD\cC^*\!=0_n$ and $\cB \cC^*\!-\cA \cD^*\!=I_n$ in Lemma \ref{lemma1} mean that
\[
\begin{pmatrix}\cC&\cD \\ \cA&\cB\end{pmatrix}{\bf J}_{2n}\begin{pmatrix}\cC^*&\cA^*\\\cD^*&\cB^*\end{pmatrix}={\bf J}_{2n},
\vspace{-1mm}
\]
which \vspace{-1mm} implies
\begin{equation} 
\label{J2nunitary}
\begin{pmatrix}\cC^*&\cA^*\\\cD^*&\cB^*\end{pmatrix}{\bf J}_{2n}\begin{pmatrix}\cC&\cD \\ \cA&\cB\end{pmatrix}={\bf J}_{2n}.
\end{equation}

\vspace{2mm}

\smallskip

We remark that neither the choice of the matrices $\cA$, $\cB$ in \eqref{Bc} is unique nor is 
the choice of $\cC$, $\cD$ in Lemma \ref{lemma1} for given $\cA$, $\cB$.  
For example, it is easy to check that for the matrices $\cA_\tau$, $\cB_\tau$ in \eqref{Kirch} for $\tau \ne \pm\infty$, 
   instead of the matrices $\cC$, $\cD$ in the proof of Lemma \ref{lemma1}
   one may also choose
   \[
   \cC_\tau\!:=\!\begin{pmatrix}1&-1&0&\cdots&0&0\\0&1&-1&\cdots&0&0\\
   \vdots&\vdots&\vdots&&\vdots&\vdots\\
   0&0&0&\cdots&1&-1\\
   0&0&0&\cdots&0&0\end{pmatrix}\!, \quad
   \cD_\tau\!:=\!\begin{pmatrix}0&1&1&\cdots&1&1\\
   0&0&1&\cdots&1&1\\
   \vdots&\vdots&\vdots&&\vdots&\vdots\\0&0&0&\cdots&0&1\\
   1&1&1&\cdots&1&1\end{pmatrix}\!.
   \]


\section{Regular Dirac-Krein systems on an interval}
\label{sec:CanonicalSystems} 

In this section we collect some basic results for Dirac--Krein systems on an edge $e_j=[v,v_j]$ with $j\in\{1,2,\dots,n\}$ fixed; for simplicity, we write $x$ for the variable on $e_j$ here. 

Assume $p_j$, $q_j \in L^1(e_j,\R)$ and let $J$, $V_j$ be as in \eqref{JVj}. We consider the regular problem
\begin{equation}
\label{eq}
  -J\f_j'(x)+V_j(x)\f_j(x)-z\f_j(x)=\g_j(x),\quad \g_j\!\in\! {\bf L}^2(e_j),\quad x\!\in\! e_j,\, z\!\in\!\mathbb C,
\end{equation}
for $\f_j=\big(f_j\ \wh f_j\big)^{\rm t} \in \L^2(e_j) \oplus \L^2(e_j)$ 
subject to the boundary conditions
\begin{equation}
\label{bc}
 f_j(v)=0,\qquad 
 \cos\alpha_j\,f_j(v_j)+\sin \alpha_j \,\hf_j (v_j)=0 \ \ \text{with } \ \al_j\in[0, \pi).
\end{equation}
Using the self-adjoint operator $T_j$ introduced in \eqref{Tj}, the inhomogeneous problem \eqref{eq}, \eqref{bc} can be written as
\[
 (T_j-z)\f_j = \g_j, \quad \f_j \in \dom T_j.
\]
In the sequel we derive formulas for the resolvent of the self-adjoint operator $T_j$ in $\L^2(e_j)$ defined in \eqref{Tj},
as well as a trace formula for $T_j$.
\medskip

\subsection{}
Denote by $Y_j(\cdot,z)$, $z\in\mathbb C$, the fundamental matrix of \eqref{eq} with initial value $I_2$ at $v$, i.e.\ the unique
$2\times 2$--matrix function solving the initial value problem
\begin{equation}
\label{Y}
  -JY_j'(x,z)+V_j(x)Y_j(x,z)=zY_j(x,z),\ \ x \in e_j, \quad Y_j(v,z)=I_2.
\end{equation}
The function $Y_j(x,\cdot)$ is entire for each $x\in e_j$, satisfies $Y_j(\cdot,\ol z)=\ol{Y_j(\cdot,z)}$, and has the property
\begin{equation}
\label{basic}
  J-Y_j(x,\zeta)^*JY_j(x,z)=({z-\ol\zeta })\displaystyle\int_v^x Y_j(\xi,\zeta)^*Y_j(\xi,z)\,\d\xi,
  \quad 
  z,\zeta\in\mathbb C;
\end{equation}
this follows easily from differentiating both sides and using \eqref{Y}. 
In particular, \eqref{basic} for $\zeta=\ol z$ shows that
\begin{equation}
\label{lag}
  J=Y_j(x,\ol z)^*JY_j(x,z)=Y_j(x,z)JY_j(x,\ol z)^*,\quad x\in e_j,\ z\in\mathbb C.
\end{equation}

\medskip

We introduce the \emph{Weyl--Titchmarsh function} $m_j$ of the $j$-th edge, by the 
equation
\begin{equation}
\label{m}
  \big(\cos\al_j\ \sin\al_j\big)Y_j(v_j,z)\begin{pmatrix}-1\\m_j(z)\end{pmatrix}=0, 
  \quad z\in \C\setminus \R.
\end{equation}
Then $m_j$ is meromorphic since, with $Y_j(\cdot,z)=:\left(y_{j,kl}(\cdot,z)\right)_{k,l=1}^2$, 
\begin{equation}
\label{c}
  m_j(z)=\dfrac{y_{j,11}(v_j,z)\cos\al_j+y_{j,21}(v_j,z)\sin\al_j}{y_{j,12}(v_j,z)\cos\al_j+y_{j,22}(v_j,z)\sin\al_j} 
        =: \frac{b_j(z)}{c_j(z)},
  \quad z\in \C\setminus \R.
\end{equation}
The relation \eqref{m} can equivalently be written as
\begin{equation}
\label{m1}
  Y_j(v_j,z)\begin{pmatrix}-1\\m_j(z)\end{pmatrix}=\begin{pmatrix} -\sin\al_j\\ \hspace{3mm} \cos\al_j\end{pmatrix} \dfrac{1}{c_j(z)}, 
  \quad z\in \C\setminus \R;
\end{equation}
in fact, by \eqref{c} and \eqref{lag},
\begin{align*}
&c_j(z) Y_j(v_j,z)\begin{pmatrix}-1\\m_j(z)\end{pmatrix} \\[-3mm]
&\hspace{3mm} = Y_j(v_j,z)\begin{pmatrix}\!-c_j(z)\\ \ \,b_j(z)\end{pmatrix}
= \overbrace{Y_j(v_j,z) J Y_j(v_j,\ol z)^*}^{=J} \begin{pmatrix} \cos\al_j\\ \sin\al_j\end{pmatrix} 
= \begin{pmatrix} \!-\sin\al_j\\ \hspace{3mm} \!\cos\al_j\end{pmatrix}.
\end{align*}

\begin{prop}
\label{kreinresolventj}
For $z\in\C\setminus\R$ and arbitrary $\g_j\in {\bf L}^2(e_j)$, the solution of 
the inhomogeneous problem \eqref{eq}, \eqref{bc}, i.e.\ the resolvent $(T_j-z)^{-1}$ is given by
\begin{align}
\label{sol}
&\!\!\!\big( (T_j\!-\!z)^{-1} \g_j \big) (x)  \\[3mm]
&\!\!\!= Y_j(x,z) \!\left( \!\!\begin{pmatrix}0\!\!&\!\!-1\\0\!\!&\!\!m_{j}(z)\end{pmatrix}\!\!\displaystyle\int_v^x \!\!\!\!\!Y_{\!j\!}(\xi,\ol z)^*\!\g_{\!j\!}(\xi)\d\xi
\!+\!\begin{pmatrix}0\!\!&\!\!0\\\!-1\!\!&\!\!m_{j}(z)\end{pmatrix}\!\!\displaystyle\int_x^{v_j}\!\!\!\!\!\!Y_{\!j\!}(\xi,\ol z)^*\!\g_{\!j\!}(\xi)\d\xi\! \right)\!\!  
\nonumber
\end{align}
for $x\in e_j=[v,v_j]$. 
\end{prop}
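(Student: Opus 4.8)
The plan is to verify directly that the right-hand side of \eqref{sol} defines the unique solution $\f_j$ of the inhomogeneous boundary value problem \eqref{eq}, \eqref{bc}. First I would recall the variation-of-parameters ansatz for the first-order system: any solution of $-J\f_j' + V_j\f_j - z\f_j = \g_j$ has the form
\[
 \f_j(x) = Y_j(x,z)\,\f_j(v) + Y_j(x,z)\int_v^x Y_j(\xi,z)^{-1}(-J)\g_j(\xi)\,\d\xi,
\]
and then use \eqref{lag}, which gives $Y_j(\xi,z)^{-1} = -J\,Y_j(\xi,\ol z)^* J$, together with $J(-J)=I_2$ and $J^2=-I_2$, to rewrite the kernel $Y_j(\xi,z)^{-1}(-J)$ as $-J\,Y_j(\xi,\ol z)^*$. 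So the general solution is
\[
 \f_j(x) = Y_j(x,z)\Big(\f_j(v) - J\int_v^x Y_j(\xi,\ol z)^*\g_j(\xi)\,\d\xi\Big),
\]
and the task reduces to choosing the constant vector $\f_j(v) = (0\ \ c)^{\rm t}$ (forced by the Dirichlet condition $f_j(v)=0$ in \eqref{bc}) so that the remaining boundary condition at $v_j$ is met.

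Next I would impose $\cos\al_j f_j(v_j) + \sin\al_j \hf_j(v_j) = 0$, i.e.\ $(\cos\al_j\ \ \sin\al_j)\f_j(v_j)=0$. Writing $\f_j(v_j) = Y_j(v_j,z)\big((0\ \ c)^{\rm t} - J\int_v^{v_j} Y_j(\xi,\ol z)^*\g_j(\xi)\,\d\xi\big)$ and substituting the defining relation \eqref{m} of $m_j$, namely $(\cos\al_j\ \ \sin\al_j)Y_j(v_j,z)(-1\ \ m_j(z))^{\rm t}=0$, lets me solve for $c$. A short computation — using once more $(\cos\al_j\ \ \sin\al_j)Y_j(v_j,z)J = \big((\cos\al_j\ \ \sin\al_j)Y_j(v_j,z)\big)J$ and the explicit coordinates $b_j(z),c_j(z)$ from \eqref{c}, or more slickly the identity \eqref{m1} — should yield $c = m_j(z)$ together with a contribution from the integral term. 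Splitting the full integral $\int_v^{v_j}$ as $\int_v^x + \int_x^{v_j}$ and collecting the two pieces is exactly what produces the two-matrix form displayed in \eqref{sol}: the $\int_v^x$ part picks up the matrix $\begin{pmatrix}0&-1\\0&m_j(z)\end{pmatrix}$ and the $\int_x^{v_j}$ part the matrix $\begin{pmatrix}0&0\\-1&m_j(z)\end{pmatrix}$, the difference between the two being precisely $-J = \begin{pmatrix}0&1\\-1&0\end{pmatrix}$ acting on the overlapping range, which is consistent with the variation-of-parameters kernel jump.

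Finally I would check the two sanity points that make this a genuine resolvent formula rather than a formal identity: (i) that the constructed $\f_j$ lies in $\L^2(e_j)$ and in $\dom T_j$ — absolute continuity and $H_j\f_j\in\L^2$ are automatic since $Y_j(\cdot,z)$ is absolutely continuous and $\g_j\in\L^2$, the Dirichlet condition at $v$ holds by the first column of the bracket vanishing there (both integrals are empty or the $\int_x^{v_j}$ term has first row $(0\ 0)$ at $x=v$ paired against $Y_j(v,z)=I_2$), and the condition at $v_j$ holds by the choice of $c$; and (ii) that $z\in\C\setminus\R$ guarantees $c_j(z)\ne 0$ so that $m_j(z)$ is finite, which follows because $z\in\rho(T_j)$ and a zero of $c_j$ would force a nontrivial eigenfunction. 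Uniqueness is then immediate from $z\in\rho(T_j)$. The only mildly delicate step is bookkeeping the $J$-conjugations from \eqref{lag} and the coordinate identifications in \eqref{c}–\eqref{m1} so that the two matrices in \eqref{sol} come out exactly as stated; everything else is a routine variation-of-parameters argument.
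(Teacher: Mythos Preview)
Your constructive approach via variation of parameters is sound and would work, but it differs from the paper's proof, which is a direct \emph{verification}: the paper simply differentiates the right-hand side of \eqref{sol}, uses that the difference of the two coefficient matrices is $J$ together with the identity $Y_j(x,z)JY_j(x,\ol z)^*=J$ from \eqref{lag} to recover $\g_j$, and then checks the two boundary conditions by inspection (the first integral vanishes at $x=v$ and the first row of the second matrix is zero; the second integral vanishes at $x=v_j$ and \eqref{m} handles the remaining term). That route avoids solving for the constant $c$ altogether and is shorter; your route has the advantage of explaining where the formula comes from.

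Two sign slips to fix in your bookkeeping. First, from $-J\f_j'+(V_j-z)\f_j=\g_j$ and $-JY_j'+(V_j-z)Y_j=0$ one gets $-JY_j\mathbf c'=\g_j$, hence $\mathbf c'=Y_j^{-1}J\g_j$ (not $Y_j^{-1}(-J)\g_j$), so the particular solution carries $+J\int_v^x Y_j(\xi,\ol z)^*\g_j\,\d\xi$, not $-J$. Second, the difference of the two matrices in \eqref{sol} is
\[
\begin{pmatrix}0&-1\\0&m_j(z)\end{pmatrix}-\begin{pmatrix}0&0\\-1&m_j(z)\end{pmatrix}=\begin{pmatrix}0&-1\\1&0\end{pmatrix}=J,
\]
not $-J$; this is exactly what matches the corrected kernel above. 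With these signs straightened out your argument goes through, and the constant you recover is $\f_j(v)=\begin{pmatrix}0&0\\-1&m_j(z)\end{pmatrix}\int_v^{v_j}Y_j(\xi,\ol z)^*\g_j\,\d\xi$ (so $c$ is not literally $m_j(z)$ but the scalar $(-1\ \ m_j(z))\int_v^{v_j}Y_j(\xi,\ol z)^*\g_j\,\d\xi$, which is what your phrase ``together with a contribution from the integral term'' is gesturing at).
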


\begin{proof}
We sketch the proof for the convenience of the reader. 
A straightforward calculation using \eqref{Y} and \eqref{lag} shows that, for $\f_j = (T_j-z)^{-1}\g_j$,
\begin{align*}
-J\f_j'(x)+&V_j(x)\f_j(x)-z\f_j(x)\\
&=-JY_j(x,z)\left( \begin{pmatrix}0\!\!&\!\!-1\\0\!\!&\!\!m_j(z)\end{pmatrix} - \begin{pmatrix}0\!\!&\!\!0\\-1\!\!&\!\!m_j(z)\end{pmatrix} \right)
Y_j(x,\ol z)^*\g_j(x)\\
&=-JY_j(x,z)JY_j(x,\ol z)^*\g_j(x)=\g_j(x).
\end{align*}
The boundary condition $f_j(v) \!=\!0$ is satisfied since the first integral in \eqref{sol} vanishes at $x\!=\!v$ 
and the first component of the second term is $0$.~The bound\-ary condition
$\cos\alpha_jf_j(v_j)+\sin \alpha_j \hf_j (v_j)\!=\!0$ is satisfied since, by \eqref{m}, it is sa\-tisfied by the first term  and the second integral in \eqref{sol} vanishes at~$x\!=\!v_j$.
\end{proof}

In the following scalar- and matrix-valued Nevanlinna functions will play an important role, so we recall the notion for convenience. 
An $M_n(\mathbb C)$-valued function $\mathcal M$ is a \emph{matrix Nevanlinna function}, or Nevanlinna function if $n=1$, if it is defined and 
locally holomorphic at least on $\C\setminus\R$ with 
\begin{equation}
\label{matrixNevanlinna}
\mathcal M(\overline z)=\mathcal M(z)^*, \quad \dfrac{\Im \mathcal M(z)}{\Im z}\ge 0,\quad z\in\C\setminus\R;
\end{equation}
the maximal domain of $\mathcal M$ consists of $\C\setminus\R$ together with all real points into which $\mathcal M$ can be continued analytically.
A matrix Nevanlinna function admits an integral representation
\[
\mathcal M(z)=\mathcal M_0 + \mathcal M_1 z+\int_{\mathbb R}\left(\frac 1{\la -z}-\frac \la{1+\la^2}\right) \d \Sigma(\la),\quad z\in\mathbb C\setminus {\rm supp}\, \Sigma,
\]
where $\Sigma$ is a non-decreasing function on $\mathbb R$ whose values are symmetric matrices in $M_n(\mathbb C)$ such that $\displaystyle\int_\R \dfrac{\d\Sigma(\la)}{1+\la^2}<\infty$,  
${\rm supp}\, \Sigma$ denotes the set of points of increase of $\Sigma$, and $\mathcal M_0, \mathcal M_1 \in M_n(\C)$ are symmetric matrices with $\mathcal M_1\ge 0$. 
The matrix Nevanlinna functions appearing in this paper are all meromorphic, i.e.\ the integral above is in fact a sum, and all their singularities are poles of first order with negative residue.

Further, we recall that a kernel $N:\cD \times \cD\to \C$ with $\cD\subset \C$ is called \emph{positive definite} 
if for all $n\in\N$, $z_i\in\cD$, $w_i\in\C$, \vspace{-1mm} $i=1,2,\dots,n$,
\[
  \sum_{i,k=1}^n \!N(z_i,z_k) \, w_i \overline{w_k} \,\ge\, 0.
\] 

In order to show that $m_j$ in \eqref{c} is a Nevanlinna function, 
for points $z\in\C$ where $m_j$ is holomorphic we define $\Gamma_{j,z}: \C \to \L^2(e_j)$ by
\[
  \Gamma_{j,z}c := Y_j(\cdot,z) \binom {-1}{m_j(z)}c, \quad \ c\in \C.      
\]

\begin{lem}
\label{nevanlinna}
The function $m_j$ has the property that 
\begin{equation}
\label{nationaldag}
  N_{m_j}(z,\zeta):=\dfrac{m_j(z)-\ol{m_j(\zeta)}}{z-\ol \zeta}  = \Gamma_{j,\zeta}^* \Gamma_{j,z}, \quad z,\zeta \in \C\setminus \R, \ z \ne \overline \zeta.
\end{equation}
Hence $N_{m_j}$ is a positive definite kernel and $m_{j}$ is a Nevanlinna~function with $\Im m_j(z)>0$ for $\Im z >0$.
 \end{lem}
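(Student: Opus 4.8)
The plan is to verify the identity \eqref{nationaldag} by a direct computation using the fundamental relation \eqref{basic} and the characterization \eqref{m1} of $m_j$, and then to read off the Nevanlinna property as a formal consequence.

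First I would compute $\Gamma_{j,\zeta}^*\Gamma_{j,z}$ from the definition: by the definition of the inner product on $\L^2(e_j)$,
\[
  \Gamma_{j,\zeta}^*\Gamma_{j,z}
  = \int_v^{v_j} \Bigl( Y_j(\xi,z)\binom{-1}{m_j(z)}, Y_j(\xi,\zeta)\binom{-1}{m_j(\zeta)} \Bigr)_{\C^2} \d\xi
  = \binom{-1}{m_j(\zeta)}^{\!*}\!\Bigl( \int_v^{v_j} Y_j(\xi,\zeta)^*Y_j(\xi,z)\,\d\xi \Bigr)\!\binom{-1}{m_j(z)}.
\]
Now I would substitute \eqref{basic} with $x=v_j$, which gives
\[
  (z-\ol\zeta)\int_v^{v_j} Y_j(\xi,\zeta)^*Y_j(\xi,z)\,\d\xi = J - Y_j(v_j,\zeta)^*JY_j(v_j,z),
\]
so that $(z-\ol\zeta)\,\Gamma_{j,\zeta}^*\Gamma_{j,z}$ equals $\binom{-1}{m_j(\zeta)}^{\!*}\bigl(J-Y_j(v_j,\zeta)^*JY_j(v_j,z)\bigr)\binom{-1}{m_j(z)}$. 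The $J$-term contributes $\binom{-1}{\ol{m_j(\zeta)}}^{\!*}J\binom{-1}{m_j(z)} = m_j(z)-\ol{m_j(\zeta)}$ (using $m_j(\ol\zeta)=\ol{m_j(\zeta)}$, which follows from $Y_j(\cdot,\ol z)=\ol{Y_j(\cdot,z)}$ and \eqref{m}). For the remaining term, I would apply \eqref{m1} on both sides: $Y_j(v_j,z)\binom{-1}{m_j(z)} = \binom{-\sin\al_j}{\cos\al_j}\tfrac1{c_j(z)}$ and likewise with $\zeta$, hence $Y_j(v_j,\zeta)^*J\,Y_j(v_j,z)\binom{-1}{m_j(z)}$ evaluated against $\binom{-1}{m_j(\zeta)}^*$ produces $\overline{\bigl(1/c_j(\zeta)\bigr)}\,\bigl(1/c_j(z)\bigr)\binom{-\sin\al_j}{\cos\al_j}^{\!*}J\binom{-\sin\al_j}{\cos\al_j}$, and the scalar $\binom{-\sin\al_j}{\cos\al_j}^{\!*}J\binom{-\sin\al_j}{\cos\al_j}=0$ because $J$ is skew-symmetric and $\binom{-\sin\al_j}{\cos\al_j}$ is real. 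Therefore the subtracted term vanishes and $(z-\ol\zeta)\,\Gamma_{j,\zeta}^*\Gamma_{j,z} = m_j(z)-\ol{m_j(\zeta)}$, which is \eqref{nationaldag}.

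Once the kernel identity is established, the rest is immediate. Positive definiteness of $N_{m_j}$ follows because for any choice of $z_i\in\C\setminus\R$ and $w_i\in\C$ the sum $\sum_{i,k} N_{m_j}(z_i,z_k)w_i\ol{w_k} = \bigl\| \sum_i w_i\,\Gamma_{j,z_i}\bigr\|_{\L^2(e_j)}^2 \ge 0$ (with the diagonal terms $N_{m_j}(z_i,z_i)=\Gamma_{j,z_i}^*\Gamma_{j,z_i}=\|\Gamma_{j,z_i}\|^2$ understood as the limit $\zeta\to z_i$, which exists since $m_j$ is meromorphic). Taking $z=\zeta$ in \eqref{nationaldag} gives $\Im m_j(z)/\Im z = \|\Gamma_{j,z}\|^2 \ge 0$; since $\Gamma_{j,z}c = Y_j(\cdot,z)\binom{-1}{m_j(z)}c$ and $Y_j(\cdot,z)$ is invertible pointwise (its determinant is constant and nonzero by \eqref{lag}), the vector $\binom{-1}{m_j(z)}$ is nonzero, so $\Gamma_{j,z}\ne 0$ and in fact $\Im m_j(z)>0$ for $\Im z>0$. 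Together with $m_j(\ol z)=\ol{m_j(z)}$ and meromorphy, this says $m_j$ is a Nevanlinna function.

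The only genuinely delicate point is the bookkeeping in the matrix algebra — keeping track of which argument is conjugated, correctly using $m_j(\ol\zeta)=\ol{m_j(\zeta)}$, and verifying that the extra term really cancels via the skew-symmetry of $J$ against a real vector. Everything else is formal once \eqref{basic} and \eqref{m1} are invoked; no analytic subtleties arise because the problem is regular on a compact interval, so $\Gamma_{j,z}$ genuinely lands in $\L^2(e_j)$ and all integrals converge.
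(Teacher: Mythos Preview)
Your proof is correct and follows essentially the same route as the paper's: both rely on the fundamental identity \eqref{basic} at $x=v_j$, the relation \eqref{m1}, and the observation that $\binom{-\sin\al_j}{\cos\al_j}^{\!*}J\binom{-\sin\al_j}{\cos\al_j}=0$. The only cosmetic difference is that you compute from $\Gamma_{j,\zeta}^*\Gamma_{j,z}$ towards the difference quotient, while the paper starts from the difference quotient and works towards the integral; the ingredients and logic are identical. (One minor slip: in the $J$-term you wrote $\binom{-1}{\ol{m_j(\zeta)}}^{\!*}$ where you meant $\binom{-1}{m_j(\zeta)}^{\!*}=(-1\ \ \ol{m_j(\zeta)})$, but your stated result $m_j(z)-\ol{m_j(\zeta)}$ is correct.)
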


\begin{proof}
Using \eqref{m1} and \eqref{basic}, we find
\begin{align*}
&\dfrac{m_j(z)-\ol{m_j(\zeta)}}{z-\ol \zeta}=\dfrac{\big(\!-\!\!1\ \ \ol{m_j(\zeta)}\big)\,J\begin{pmatrix}-1\\ m_j(z)\end{pmatrix}}{z-\ol \zeta}\\
&\!=\!\frac1{\ol{c_j(\zeta)}}\big(\!\!-\!\sin\!\al_j\, \cos\al_j\big)\dfrac{Y_j(v_j,\zeta)^{-*}JY_j(v_j,z)^{-1}}{z-\ol \zeta}\begin{pmatrix}-\sin\al_j\\ \hspace{3mm}\cos\al_j\end{pmatrix}\frac1{c_j(z)}\\
&\!=\!\frac1{\ol{c_j(\zeta)}}\big(\!\!-\!\sin\!\al_j\, \cos\al_j\big)\dfrac{Y_j(v_j,\zeta)^{-*}JY_j(v_j,z)^{-1}-J}{z-\ol \zeta}\begin{pmatrix}-\sin\al_j\\\hspace{3mm} \cos\al_j\end{pmatrix}\frac1{c_j(z)}\\
&\!=\! \big(\!-\!\!1\ \ \ol{m_j(\zeta)}\big) \dfrac{J\!-\!Y_j(v_j,\zeta)^*JY_j(v_j,z)}{z-\ol \zeta}  \begin{pmatrix}-1\\ m_j(z)\end{pmatrix}\\
&\!=\! \big(\!-\!\!1\ \ \ol{m_j(\zeta)}\big) \displaystyle\int_v^{v_j} Y_j(\xi,\zeta)^*Y_j(\xi,z)\,\d\xi \begin{pmatrix}-1\\ m_j(z)\end{pmatrix}\\
&\!=\!  \Gamma_{j,\zeta}^{*} \Gamma_{j,z}.
\end{align*}
Further, for all $n\in\N$, $z_i\in\C\setminus \R$, $w_i\in\C$, $i=1,2,\dots,n$,
\begin{align*}
\sum\limits_{i,k=1}^n \! \Gamma_{j,z_k}^* \Gamma_{j,z_i} w_i \overline{w_k} 
= \bigg( \sum\limits_{i=1}^n \Gamma_{j,z_i}  w_i , \sum\limits_{i=1}^n\Gamma_{j,z_i}  w_i \bigg)_{\L^2(e_j) } \ge 0. &
\end{align*}
Since $\Gamma_{j,z} \ne 0$, the last claim follows from \eqref{nationaldag}.
\end{proof}

\subsection{}
The resolvent $(T_j-z)^{-1}$, $z\in \rho(T_j)$, is a Hilbert--Schmidt operator since it is an integral operator with square integrable kernel on $e_j\times e_j$,
\[
\big( (T_j\!-\!z)^{-1} \g_j \big) (x)  =
\int_v^{v_j} \!\!\!\!G_j(x,\xi;z)\g_j(\xi)\,\d\xi, \quad \g_j \in \L^2(e_j), \ x\in e_j,  
\]
with Green's function
\begin{equation}
\label{green}
G_j(x,\xi;z):= 
\begin{cases} 
   Y_j(x,z) \begin{pmatrix}\ 0&\!-1\\ \ 0&\!m_j(z)\end{pmatrix} 
   Y_j(\xi,\ol z)^*,&\ \ v\le \xi<x \le v_j,\\[4mm]
   Y_j(x,z) \begin{pmatrix}0\!\!&\!\!0\\-1\!\!&\!\!m_j(z)\end{pmatrix} 
   Y_j(\xi,\ol z)^*,&\ \ v \le x < \xi \le v_j.
\end{cases}   
\end{equation}
Consequently, for $z_1, z_2\in \rho(T_j)$, the difference
\begin{equation}
\label{diff-trace}
   (T_j-z_1)^{-1}-(T_j-z_2)^{-1}=(z_1-z_2)(T_j-z_1)^{-1}(T_j-z_2)^{-1}
\end{equation}
is a trace class operator. Since, by \eqref{green} and \eqref{lag}, 
\begin{equation}
\label{resdif}
G_j(x,x+0;z)-G_j(x,x-0;z)=Y_j(x,z) (-J)
Y_j(x,\ol z)^*
=-J, \ \ x \in e_j,
\end{equation}
the resolvent difference in \eqref{diff-trace} is an integral operator with continuous kernel. 
Hence its trace can be calculated as (see e.g.\ \cite{MR0246142})
\begin{equation}
\label{resdiff}
   \tr\left((T_j\!-\!z_1)^{-\!1}\!\!-\!(T_j\!-\!z_2)^{-\!1}\right)\!=\tr\!\!\int_v^{v_{\!j}}\!\!\!\!\! \big(G_j(x,x+0;z_1)-G_j(x,x+0;z_2)\big)\d x.
\hspace{-1mm}
\end{equation}

\begin{prop}
\label{traceformula1}
For $z\in \rho(T_j)$, let $Y_j(\cdot,z)=\left(y_{j,kl}(\cdot,z)\right)_{k,l=1}^2$ and let 
\[
  c_j(z)= y_{j,12}(v_j,z)\cos\al_j+y_{j,22}(v_j,z)\sin\al_j
\]   
be the denominator in the Weyl--Titchmarsh function $m_j$ in \eqref{c}. Then
\begin{equation}
\label{trace0}
\tr \big((T_j-z_1)^{-1}-(T_j-z_2)^{-1}\big)
= - \frac{\dot c_j(z_1)}{c_j(z_1)} + \frac{\dot c_j(z_2)}{c_j(z_2)}, \quad z_1, z_2\in \rho(T_j).
\end{equation}
\end{prop}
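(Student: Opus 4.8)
The plan is to compute the right-hand side of \eqref{resdiff} in closed form by evaluating the single-point quantity $z\mapsto\tr\int_v^{v_j}G_j(x,x+0;z)\,\d x$ and then subtracting its values at $z_1$ and $z_2$. By \eqref{green}, $G_j(x,x+0;z)=Y_j(x,z)\begin{pmatrix}0&0\\-1&m_j(z)\end{pmatrix}Y_j(x,\ol z)^*$; since $Y_j(\cdot,z)$ is continuous on the compact edge $e_j$ and $m_j(z)$ is finite for $z\in\rho(T_j)$, this integral converges absolutely, so the difference in \eqref{resdiff} really is a difference of two such integrals. It therefore suffices to prove
\[
  \tr\int_v^{v_j}G_j(x,x+0;z)\,\d x=-\frac{\dot c_j(z)}{c_j(z)},\qquad z\in\C\setminus\R,
\]
since both sides are holomorphic in $z$ on $\rho(T_j)$, so the identity extends to all of $\rho(T_j)$ by analytic continuation and \eqref{trace0} follows.

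The key ingredient is a closed expression for $\int_v^x Y_j(\xi,\ol z)^*Y_j(\xi,z)\,\d\xi$, which I would obtain by differentiating the basic identity \eqref{basic} with respect to $z$ (at fixed $\zeta$, hence at fixed $\ol \zeta$) and then specializing $\zeta=\ol z$, so that the prefactor $z-\ol \zeta$ disappears:
\[
  \int_v^x Y_j(\xi,\ol z)^*Y_j(\xi,z)\,\d\xi=-\,Y_j(x,\ol z)^*\,J\,\dot Y_j(x,z),\qquad x\in e_j.
\]
Feeding this back into \eqref{green} and using cyclicity of the matrix trace gives
\[
  \tr\int_v^{v_j}G_j(x,x+0;z)\,\d x=-\tr\!\left(\begin{pmatrix}0&0\\-1&m_j(z)\end{pmatrix}Y_j(v_j,\ol z)^*\,J\,\dot Y_j(v_j,z)\right).
\]
Writing $\begin{pmatrix}0&0\\-1&m_j(z)\end{pmatrix}=\binom{0}{1}\big(-1\ \ m_j(z)\big)$ and applying cyclicity once more reduces the right-hand side to the scalar $-\big(-1\ \ m_j(z)\big)Y_j(v_j,\ol z)^*\,J\,\dot Y_j(v_j,z)\binom{0}{1}$. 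By the adjoint of \eqref{m1} evaluated at $\ol z$, together with $m_j(\ol z)=\ol{m_j(z)}$ and $c_j(\ol z)=\ol{c_j(z)}$, one has $\big(-1\ \ m_j(z)\big)Y_j(v_j,\ol z)^*=\frac{1}{c_j(z)}\big(-\sin\al_j\ \ \cos\al_j\big)$; since $\big(-\sin\al_j\ \ \cos\al_j\big)J=\big(\cos\al_j\ \ \sin\al_j\big)$ and $\dot Y_j(v_j,z)\binom{0}{1}$ is the second column of $\dot Y_j(v_j,z)$, this scalar equals $-\frac{1}{c_j(z)}\big(\cos\al_j\,\dot y_{j,12}(v_j,z)+\sin\al_j\,\dot y_{j,22}(v_j,z)\big)=-\dot c_j(z)/c_j(z)$ by the definition of $c_j(z)$. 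Substituting into \eqref{resdiff} and splitting the difference yields \eqref{trace0}.

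The one point I expect to need genuine care — the rest being the cyclic-trace bookkeeping together with the already-proved identities \eqref{basic} and \eqref{m1} — is the differentiation of \eqref{basic}: one differentiates in the holomorphic variable $z$ with $\zeta$ and $\ol \zeta$ held fixed, specializes $\zeta=\ol z$ only afterwards, and must track correctly the signs, the placement of the matrix $J$, and the adjoints. (Alternatively the formula can be read off from the Hadamard factorization of the entire function $c_j$, whose zeros are precisely the eigenvalues of $T_j$, but the direct computation above is more economical and self-contained.)
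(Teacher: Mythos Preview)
Your proposal is correct and follows essentially the same route as the paper: compute $\tr\int_v^{v_j}G_j(x,x+0;z)\,\d x$ via the closed form $\int_v^{v_j}Y_j(\xi,\ol z)^*Y_j(\xi,z)\,\d\xi=-Y_j(v_j,\ol z)^*J\dot Y_j(v_j,z)$, then use cyclicity of the trace together with \eqref{m1} to collapse everything to $-\dot c_j(z)/c_j(z)$. The only cosmetic difference is how you obtain the integral identity: the paper isolates it as a separate lemma (Lemma~\ref{intyy}), proved by differentiating the differential equation \eqref{Y} in $z$ and integrating the resulting total derivative, whereas you differentiate the already-integrated identity \eqref{basic} in $z$ and then specialize $\zeta=\ol z$; these are two phrasings of the same computation.
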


In the proof of Proposition \ref{traceformula1} we use the following lemma.

\begin{lem} 
\label{intyy}
If we set $\dot Y_j(x,z):=\dfrac{\partial Y_j(x,z)}{\partial z}$, then
\[
   \int_v^{v_j} Y_j(x,\ol z)^*Y_j(x,z)\,\d x = -Y_j(v_j,\ol z)^* J \dot Y_j(v_j,z), \quad z\in\C.
\]
\end{lem}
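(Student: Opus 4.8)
The plan is to prove the identity by differentiating the fundamental relation \eqref{basic} with respect to the spectral parameter and then specializing. First I would take \eqref{basic} in the form
\[
  J - Y_j(x,\ol z)^* J Y_j(x,z) = (z-\ol{\ol z})\int_v^x Y_j(\xi,\ol z)^* Y_j(\xi,z)\,\d\xi,
\]
but since this vanishes identically when $\zeta = \ol z$ (that is precisely \eqref{lag}), the correct starting point is the two-variable version with independent $z$ and $\zeta$,
\[
  J - Y_j(x,\zeta)^* J Y_j(x,z) = (z-\ol\zeta)\int_v^x Y_j(\xi,\zeta)^* Y_j(\xi,z)\,\d\xi.
\]
I would differentiate both sides with respect to $z$ at fixed $\zeta$, obtaining
\[
  -Y_j(x,\zeta)^* J \dot Y_j(x,z) = \int_v^x Y_j(\xi,\zeta)^* Y_j(\xi,z)\,\d\xi + (z-\ol\zeta)\int_v^x Y_j(\xi,\zeta)^* \dot Y_j(\xi,z)\,\d\xi.
\]
Then I would set $\zeta = \ol z$ and $x = v_j$; the second integral on the right drops out because of the factor $z - \ol{\ol z} = 0$, and what remains is exactly
\[
  \int_v^{v_j} Y_j(\xi,\ol z)^* Y_j(\xi,z)\,\d\xi = -Y_j(v_j,\ol z)^* J \dot Y_j(v_j,z),
\]
which is the claim. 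Here I use that $Y_j(x,\cdot)$ is entire (stated after \eqref{Y}), so differentiation under the integral sign and interchange of $\partial_z$ with $\int_v^x$ are legitimate, and that $\dot Y_j$ depends continuously on $x$ jointly with $z$.

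An alternative, essentially equivalent route is to verify the identity directly by differentiating the proposed right-hand side $-Y_j(v_j,\ol z)^* J \dot Y_j(v_j,z)$ in $x$ is not available since the right side has no free $x$; instead one shows that both sides of the two-variable relation, after dividing by $z - \ol\zeta$ and letting $\zeta \to \ol z$, have the same limit — the left side gives the integral $\int_v^{v_j} Y_j(\xi,\ol z)^* Y_j(\xi,z)\,\d\xi$ by continuity of the integrand, and the right side, writing $J - Y_j(v_j,\zeta)^* J Y_j(v_j,z)$ and using $J = Y_j(v_j,\ol z)^* J Y_j(v_j,z)$ from \eqref{lag}, becomes the difference quotient of $\zeta \mapsto -Y_j(v_j,\zeta)^* J Y_j(v_j,z)$ whose derivative at $\zeta = \ol z$ is $-\dot Y_j(v_j,\ol z)^* J Y_j(v_j,z)$; one then conjugates using $Y_j(\cdot,\ol z) = \ol{Y_j(\cdot,z)}$ and $J^* = -J$ to rewrite this in the stated form with $\dot Y_j(v_j,z)$ on the right.

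I do not expect a genuine obstacle here: the only point requiring a little care is the bookkeeping of which variable is differentiated and which is conjugated, so that the final expression comes out as $-Y_j(v_j,\ol z)^* J \dot Y_j(v_j,z)$ rather than its adjoint or a sign-flipped variant. One should double-check, using $\ol{Y_j(\cdot,z)} = Y_j(\cdot,\ol z)$ and $\partial_z Y_j(\cdot,\ol z) = \ol{\dot Y_j(\cdot,z)}\,|_{\text{at }\ol z}$, that differentiating in $z$ (as opposed to $\zeta$) and then setting $\zeta = \ol z$ indeed lands on $\dot Y_j(v_j,z)$ and not $\dot Y_j(v_j,\ol z)$; the choice in the statement is consistent because the left-hand integrand is analytic in $z$ while the factor $Y_j(\xi,\ol z)^*$ is anti-analytic, so it is the $z$-derivative that survives when the vanishing prefactor is removed.
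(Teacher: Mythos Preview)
Your main argument is correct and clean: differentiate \eqref{basic} in $z$ with $\zeta$ held fixed, then specialize to $\zeta=\ol z$ and $x=v_j$. The factor $z-\ol{\ol z}=0$ kills the second integral and you are left with the claimed identity. The justification (entire dependence on $z$, differentiation under the integral) is fine.

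This is not the route the paper takes. The paper goes back to the differential equation: it differentiates \eqref{Y} in $z$ to obtain the initial value problem
\[
  -J\dot Y_j'(x,z)+V_j(x)\dot Y_j(x,z)-z\dot Y_j(x,z)=Y_j(x,z),\quad \dot Y_j(v,z)=0,
\]
and then verifies directly that
\[
  Y_j(x,\ol z)^* Y_j(x,z) = -\frac{\d}{\d x}\bigl(Y_j(x,\ol z)^* J \dot Y_j(x,z)\bigr),
\]
so that integration over $[v,v_j]$ gives the result. Your approach is shorter precisely because \eqref{basic} has already absorbed the $x$-integration; the paper's approach is more self-contained from the ODE and does not appeal to \eqref{basic}. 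Both are straightforward.

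Your ``alternative route'' paragraph is muddled and not needed; in particular the sentence beginning ``An alternative \ldots\ is to verify the identity directly by differentiating the proposed right-hand side \ldots\ in $x$ is not available'' is garbled, and the limiting-difference-quotient argument with a subsequent conjugation is more work than your first argument for no gain. I would drop it.
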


\begin{proof}
Taking partial derivatives with respect to $z$ in 
\eqref{Y} for $Y_j(x,z)$, we find that, for $z\in\mathbb C$, the function $\dot Y_j(x,z)$ satisfies the initial problem
\[
  -J\dot Y_j'(x,z)+V_j(x)\dot Y_j(x,z)-z\dot Y_j(x,z)=Y_j(x,z),\ \ x\in e_j,\quad\dot Y_j(v,z)=0.
\]
A straightforward computation shows that
\[
  Y_j(x,\ol z)^*Y_j(x,z)=-\dfrac \d{\d x}\left(Y_j(x,\ol z)^*J\dot Y_j(x,z)\right),
\]
and integration from $v$ to $v_j$ yields the claim.
\end{proof}

\smallskip

\noindent
\emph{Proof of Proposition {\rm \ref{traceformula1}}.}
Let $z\in \rho(T_j)$. Because of \eqref{resdiff}, 
we first consider
\begin{align}
\nonumber
\tr\int_v^{v_j} \!\! G_j(x,x+0;z)\,\d x
&=\tr\int_v^{v_j} \!\!Y_j(x,z) \begin{pmatrix}0\!\!&\!\!0\\-1\!\!&\!\!m_j(z)\end{pmatrix} Y_j(x,\ol z)^*\,\d x\\\nonumber
&=\int_v^{v_j} \!\! \tr\,\Big(Y_j(x,z) \begin{pmatrix}0\!\!&\!\!0\\-1\!\!&\!\!m_j(z)\end{pmatrix} Y_j(x,\ol z)^*\Big) \,\d x\\\nonumber
&=\int_v^{v_j} \!\! \tr\,\bigg(Y_j(x,\ol z)^*Y_j(x,z) \begin{pmatrix}0\!\!&\!\!0\\-1\!\!&\!\!m_j(z)\end{pmatrix} \bigg) \,\d x\\\nonumber
&=\tr\int_v^{v_j} \!\! Y_j(x,\ol z)^*Y_j(x,z) \begin{pmatrix}0\!\!&\!\!0\\-1\!\!&\!\!m_j(z)\end{pmatrix} \,\d x\\\nonumber
&=\tr\left( \bigg(\int_v^{v_j} \!Y_j(x,\ol z)^*Y_j(x,z) \,\d x \bigg) \begin{pmatrix}0\!\!&\!\!0\\-1\!\!&\!\!m_j(z)\end{pmatrix} \right).
\end{align}
By Lemma \ref{intyy}, \eqref{m1}, and the definition of $c_j$ in \eqref{c}, we conclude that
\begin{align*}
\tr\!\int_v^{v_j} \!G_j(x,x+0;z)\,\d x&=-\tr\left(Y_j(v_j,\ol z)^*J\dot Y_j(v_j,z)\begin{pmatrix}0\!&\!0\\-1\!&\!m_j(z)\end{pmatrix}\right)\\
&=-\tr\left(\begin{pmatrix}0\!&\!0\\-1\!&\!m_j(z)\end{pmatrix}Y_j(v_j,\ol z)^*J\dot Y_j(v_j,z)\right)\\
&=- \dfrac 1{c_j(z)}
\tr\left(\!\begin{pmatrix}0\!&\!0\\-\sin\al_j\!\!&\!\cos\al_j\end{pmatrix}J\dot Y_j(v_j,z)\!\right)\\
&=- \frac{\dot c_j(z)}{c_j(z)}.
\end{align*}
Now the claim follows from \eqref{resdiff}. \hfill \qed

\vspace{2mm}

\begin{rem}
The trace formula \eqref{trace0} can be used to find a formula for a ``regularized trace" of $(T_j-z)^{-1}$. In fact, 
since $T_j(T_j^2+I)^{-1} = \frac{1}{2}(T_j+\i)^{-1} + \frac{1}{2}(T_j-\i)^{-1}$ and $Y_j(\cdot,-\i) = \ol{Y_j(\cdot,\i)}$,
we have
\[
\tr\left((T_j-z)^{-1}- T_j(T_j^2+I)^{-1}\right)
=  - \frac{\dot c_j(z)}{c_j(z)} + {\rm Re} \, \frac{\dot c_j(\i)}{c_j(\i)}.
\]
If we denote the eigenvalues of $T_j$, which are all simple,  by $\mu_{j,k}$, $k\in \Z$, this relation becomes
\begin{align*}
\sum_{k=-\infty}^\infty\left(\dfrac 1{\mu_{j,k}-z}-\dfrac{\mu_{j,k}}{1+\mu_{j,k}^2}\right)
= - \frac{\dot c_j(z)}{c_j(z)} + {\rm Re} \frac{\dot c_j(\i)}{c_j(\i)};
\end{align*}%
note that the right hand side is a Nevanlinna function since so is the left hand side.
\end{rem}

\section{Dirac-Krein systems on a star graph}
\label{s4}
In this section we study the Dirac--Krein systems on a star graph $\mathcal G$ introduced in Section \ref{sec:symmvertex}.
We prove a resolvent formula describing all canonical self-adjoint extensions of the symmetric operator $\mS=S_1\oplus S_2\oplus\cdots\oplus S_n$ 
with respect to the self-adjoint extension $\mT_0= \mT_{I_n,0_n} =T_1\oplus T_2\oplus\cdots\oplus T_n$ given by Dirichlet conditions $\f_1^\sharp(v)=0$ at the central vertex. 

Since the operator $\mT_0$ decouples {into independent operators on the edges, so does its resolvent $(\mT_0\!-\!z)^{-1}\!=\!(T_1\!-\!z)^{-1}\!\oplus (T_2\!-\!z)^{-1}\!\oplus\cdots\oplus (T_n\!-\!z)^{-1}$. The corresponding fundamental matrix $\Y(\cdot,z)$ with $\Y(v,z)= I_{2n}$ is block-diagonal, 
\begin{equation}
\label{bfY}
 \Y(\cdot,z) := \diag \big( Y_1(\cdot,z),  Y_2(\cdot,z), \dots, Y_n(\cdot,z) \big), \quad z \in \C. 
\end{equation}
For the extensions $\mT_{\cA,\cB}$ with boundary condition $\cA\f_1^\sharp(v) \!+\! \cB \f_2^\sharp(v)=0$ at~$v$,
it is more convenient to reorder the components of $\Y$, using the notation \eqref{perm},~\eqref{fsharp}.

In the following the matrix function $\cM$ given by
\[
 \cM(z) := \diag \big( m_1(z), m_2(z), \dots, m_n(z) \big), \quad z\in \C\setminus \R,
\]
plays a crucial role. 
Clearly, $\cM$ is a matrix Nevanlinna function (see \eqref{matrixNevanlinna}) since all $m_j$ are Nevanlinna functions by Lemma \ref{nevanlinna}.
The properties \eqref{AB} of $\cA$, $\cB$ and the fact that $\Im \cM(z)$ is strictly positive for $\Im z >0$ imply that
$\cB \cM(z) - \cA$ is invertible for all $z\in \C\setminus \R$.

\begin{prop}
\label{resolvent-Dir}
Define $ \Y(\cdot,z)^\sharp \!:=\!  \P \Y(\cdot,z)\P^{\rm t}$. Then, for $z\in\C\setminus\R$ and arbitrary $\g = (\g_j)_1^n \in {\bf L}^2(\cG)$, 
the resolvent $(\mT_0-z)^{-1}$ satisfies 
\begin{equation}
\label{sol-dir-graph}
\hspace{-0.5mm}
\begin{array}{rl}
\!\!\!
\big( (\mT_0\!-\!z)^{-1} \g \big)^{\!\sharp} (\x) 
\!\!\!\!\!\!&=\! \Y(\x,z)^\sharp 
\!\left( \! \begin{pmatrix}\ \,0_n&-I_n\\\ \,0_n&\!\!\cM(z)\end{pmatrix}\!\! {\left( \displaystyle\int_v^{x_j} \!\!\!\!\!Y_{\!j}(\xi_j,\ol z)^*\!\g_{\!j}(\xi_j)\d\xi_j \right)_{\!\!1}^{\!\!n}}^{\, \sharp} \right. \\
&\hspace{17mm} \left. + \!\begin{pmatrix}0_n\!&\!0_n\\\!-I_n\!&\!\!\!\cM(z)\end{pmatrix}\!\! {\left( \displaystyle\int_{x_j}^{v_j}\!\!\!\!\!Y_{\!j}(\xi_j,\ol z)^*\!\g_{\!j}(\xi_j)\d\xi_j\! \right)_{\!\!1}^{\!\!\!n}}^{\, \sharp} \right)\!\!  \hspace{-8mm}
\end{array}
\hspace{-1mm}
\end{equation}
for $\x\!=\!(x_j)_1^n$, $x_j\!\in\!e_j\!=\![v,v_j]$, while the resolvent $(\mT_{\cA,\cB}-z)^{-1}$ satisfies
\begin{equation}
\label{sol-genbc-graph}
\begin{array}{l}
\!\big( (\mT_{\cA,\cB}\!-\!z)^{-1} \g \big)^\sharp (\x) = \big( (\mT_0\!-\!z)^{-1} \g \big)^\sharp (\x) 
\!- \Y(\x,z)^\sharp \cdot
\\[1mm]
\cdot\!\begin{pmatrix}-I_n\\ \cM(z)\end{pmatrix} \!\big(\cB\cM(z)\!-\!\cA\big)^{-1}\cB \Big(\!\!-\!I_n \ \,\cM(z) \Big) \!
{\left( \displaystyle \int_v^{v_j}\!\!\!\!\!Y_j(\xi_j,\ol z)^* \g_j(\xi_j)\d\xi_j \right)_{\!\!1}^{\!\!n}}^{\sharp}\!\!\!. 
\end{array}
\end{equation}
\end{prop}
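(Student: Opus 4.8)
The plan is to prove the two formulas separately. Formula~\eqref{sol-dir-graph} is a bookkeeping consequence of the single-edge result: since $\mT_0=T_1\oplus\cdots\oplus T_n$ decouples, so does its resolvent, $(\mT_0-z)^{-1}=(T_1-z)^{-1}\oplus\cdots\oplus(T_n-z)^{-1}$, and each summand is given by \eqref{sol} in Proposition~\ref{kreinresolventj}. I would then rewrite this block-diagonal identity in the reordered ($\sharp$) coordinates by conjugating with the permutation matrix $\P$ of \eqref{perm}: the block-diagonal matrices $\diag_j\begin{pmatrix}0&-1\\0&m_j(z)\end{pmatrix}$ and $\diag_j\begin{pmatrix}0&0\\-1&m_j(z)\end{pmatrix}$ turn into $\begin{pmatrix}0_n&-I_n\\0_n&\cM(z)\end{pmatrix}$ and $\begin{pmatrix}0_n&0_n\\-I_n&\cM(z)\end{pmatrix}$, the fundamental matrix $\diag_j Y_j(\cdot,z)$ becomes $\Y(\cdot,z)^\sharp=\P\Y(\cdot,z)\P^{\rm t}$, and the $n$-tuples of edge integrals become their $\sharp$-rearrangements; this is precisely \eqref{sol-dir-graph}, and nothing beyond permutation bookkeeping is involved.

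For \eqref{sol-genbc-graph} I would verify Krein's formula directly. First, set $\f:=(\mT_0-z)^{-1}\g$ and let $\h$ be the function whose $\sharp$-image is the correction term in \eqref{sol-genbc-graph}, that is, $\h^\sharp(\x)=\Y(\x,z)^\sharp\begin{pmatrix}-I_n\\\cM(z)\end{pmatrix}c$ with the \emph{constant} vector $c:=\big(\cB\cM(z)-\cA\big)^{-1}\cB\,\beta\in\C^n$, where $\beta:=\big(-I_n\ \ \cM(z)\big){\big(\int_v^{v_j}Y_j(\xi_j,\ol z)^*\g_j(\xi_j)\,\d\xi_j\big)_1^n}^\sharp$; here $\cB\cM(z)-\cA$ is invertible for $z\in\C\setminus\R$ by the properties~\eqref{AB} of $\cA,\cB$ together with $\Im\cM(z)>0$ for $\Im z>0$ (Lemma~\ref{nevanlinna}). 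On each edge $e_j$ the $j$-th block of $\h$ equals $Y_j(\cdot,z)\begin{pmatrix}-1\\m_j(z)\end{pmatrix}c_j$, which solves $H_j\h_j=z\h_j$ by \eqref{Y} and satisfies the outer boundary condition~\eqref{bc_j} by \eqref{m1}; hence $\h\in\dom\mS^*$ with $(\mS^*-z)\h=0$. Therefore $\bu:=\f-\h$ lies in $\dom\mS^*$, satisfies all outer boundary conditions~\eqref{bc_j}, and $(\mS^*-z)\bu=(\mT_0-z)\f=\g$, so the only property left to check for $\bu\in\dom\mT_{\cA,\cB}$ is the central vertex condition~\eqref{Bc}.

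To verify \eqref{Bc}, I would evaluate \eqref{sol} at $x=v$: there the first integral vanishes, so $\f_1^\sharp(v)=0$ and $\f_2^\sharp(v)=\beta$, while $\Y(v,z)^\sharp=I_{2n}$ gives $\h_1^\sharp(v)=-c$ and $\h_2^\sharp(v)=\cM(z)c$. Hence $\bu_1^\sharp(v)=c$ and $\bu_2^\sharp(v)=\beta-\cM(z)c$, and therefore
\[
\cA\,\bu_1^\sharp(v)+\cB\,\bu_2^\sharp(v)=\cA c+\cB\beta-\cB\cM(z)c=-\big(\cB\cM(z)-\cA\big)c+\cB\beta=0
\]
by the definition of $c$. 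Thus $\bu\in\dom\mT_{\cA,\cB}$ with $(\mT_{\cA,\cB}-z)\bu=\g$; since $\mT_{\cA,\cB}$ is self-adjoint and $z\in\C\setminus\R\subset\rho(\mT_{\cA,\cB})$, this forces $\bu=(\mT_{\cA,\cB}-z)^{-1}\g$, and since the correction term in \eqref{sol-genbc-graph} equals $-\h^\sharp(\x)$, we obtain \eqref{sol-genbc-graph}.

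The only genuine computation is this last vertex-condition check, which collapses to one line once $\f^\sharp(v)$ has been computed and the defect element $\Y(\cdot,z)^\sharp\begin{pmatrix}-I_n\\\cM(z)\end{pmatrix}$, whose columns span $\ker(\mS^*-z)$ while meeting the outer boundary conditions, has been identified. I expect the main obstacle to be organizational rather than conceptual: keeping the $\sharp$-reordering consistent throughout so that the matrix blocks and the stacked edge-integrals line up, and reading the constant vector $c$ off \eqref{sol-genbc-graph} with the correct placement of $\P$, of $\big(\cB\cM(z)-\cA\big)^{-1}$, and of the two blocks $\begin{pmatrix}-I_n\\\cM(z)\end{pmatrix}$ and $\big(-I_n\ \ \cM(z)\big)$.
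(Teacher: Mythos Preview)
Your proposal is correct and follows essentially the same approach as the paper: for \eqref{sol-dir-graph} you conjugate the direct sum of the single-edge resolvents \eqref{sol} by $\P$, and for \eqref{sol-genbc-graph} you verify directly that the right-hand side solves the inhomogeneous equation, meets the outer boundary conditions via \eqref{m}/\eqref{m1}, and satisfies the central vertex condition \eqref{Bc} by the one-line check $\cA c+\cB(\beta-\cM(z)c)=-(\cB\cM(z)-\cA)c+\cB\beta=0$. The only cosmetic difference is that you split the candidate as $\f-\h$ and name $c,\beta$ explicitly, whereas the paper treats the whole right-hand side at once; the computations coincide.
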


\begin{proof}
The formula for $(\mT_0-z)^{-1}$ follows if we take the direct sum of the formulas for $(T_j-z)^{-1}$ in \eqref{sol}, apply the permutation matrix $\P$ from the left, insert 
$\P^{\rm t} \P = \P^{-1} \P = I_n$ as a factor everywhere in between, and use that e.g.
\begin{equation}
\label{Msharp}
  \P \diag \left(  \begin{pmatrix}\!0\!&\!\!-1\\\!0\!&\!\!m_1(z)\end{pmatrix}, \dots, 
  \begin{pmatrix} \!0\!&\!\!-1\\\!0\!&\!\!m_n(z)\end{pmatrix} \right) \P^{\rm t}=   \begin{pmatrix}0_n&-I_n\\0_n&\cM(z)\end{pmatrix}.
\end{equation}

To prove the claim for $(\mT_{\cA,\cB}-z)^{-1}$, we denote the right hand side of \eqref{sol-genbc-graph} by $\f^\sharp = (\f_1^\sharp \ \f_2^\sharp)^{\rm t}$,
$\f_1^\sharp = (f_j)_1^n$, $\f_2^\sharp = (\hf_j)_1^n$ (see \eqref{perm1}).
Since $\Y(\cdot,z)$ solves the homo\-geneous differential equation on $\cG$ and $(\mT_0-z)^{-1} \g$ satisfies the inhomogeneous differential equation,  
so does $\f$.

At each vertex $v_j$ the function $\f^\sharp$ satisfies the 
boundary condition $\cos\alpha_j\,f_j(v_j)+\sin \alpha_j \,\hf_j (v_j)=0$ 
since, by \eqref{m}, it is satisfied by the first term of $\big( (\mT_0\!-\!z)^{-1} \g \big)^\sharp (\x)$ in \eqref{sol-dir-graph} as well as by the last term in \eqref{sol-genbc-graph}, and
the last integral in \eqref{sol-dir-graph} vanishes at $x_j=v_j$.

At the central vertex $v$ the boundary condition is satisfied since the first integral in \eqref{sol-dir-graph} vanishes at $x_j=v$,  
$\Y(v,z)=I_{2n}$, and hence
\begin{align*}
 &\cA \,\f_1^\sharp(v) + \cB \f_2^\sharp(v) \\[-0.05mm]
 \!&=\! \bigg( \! \Big( \!\!-\!\!\cB \ \,\cB \cM(z) \Big) \!-\! ( \cB \cM(z) \!-\! \cA ) ( \cB \cM(z) \!-\! \cA )^{-1} \cB \,\Big( \!\!-\!I_n \ \,\cM(z) \Big) \!\bigg) I(\g,z)^\sharp \!=\! 0,
\vspace{-1mm} 
\end{align*}
where we have \vspace{-1mm} set 
\begin{equation}
\label{integral}
I(\g,z):= \Big( \int_v^{v_j} \!\! Y_j(\xi_j,\ol z)^*\g_j(\xi_j)\d\xi_j \Big)_1^ n, \quad \g\in \L^2(\cG),
\end{equation}
and hence $\f\in \dom \mT_{\cA,\cB}$.
\end{proof}

The following alternative formula for the resolvent $(T_{\cA,\cB}\!-\!z)^{-1}$ gathers all singularities in one term; it is an immediate consequence of 
\eqref{sol-genbc-graph} if we multiply by $\P^{-1}=\P^{\rm t}$ from the left and use $\P^{\rm t} \Y(\cdot,\la)^\sharp = \Y(\cdot,\la) \P^{\rm t}$.

\begin{cor}
\label{PW}
For $z\in\C\setminus\R$, $\g\in {\bf L}^2(\cG)$, $\x\!=\!(x_j)_1^n$, $x_j\!\in\!e_j\!=\![v,v_j]$, we have 
\[
\begin{array}{rl}
\!\!\big( (\mT_{\cA,\cB}\!-\!z)^{-1} \g \big) (\x) 
= \!\!\!\! & \Y(\x,z) \P^{\rm t} \!\Bigg( \!\begin{pmatrix}0_n\!\!&\!\!-I_n\\0_n\!\!&\!\!0_n\end{pmatrix}\!\P\!\left(\displaystyle\int_v^{x_j} \!\!\!Y_j(\xi_j,\ol z)^*\!\g_j(\xi_j)\d\xi_j \right)_{\!\!1}^{\!\!n} \\[3mm]
& \hspace{14.8mm} +\begin{pmatrix}0_n\!\!&\!\!0_n\\\!-I_n\!\!&\!\!0_n\end{pmatrix}\!\P \bigg( \displaystyle\int_{x_j}^{v_j}\!\!\!Y_j(\xi_j,\ol z)^*\!\g_j(\xi_j)\d\xi_j \bigg)_{\!\!1}^{\!\!n} \, \Bigg) \\[3mm]
& \,+ \,\Y(\x,z) \P^{\rm t} \cP(z) \P {\left( \displaystyle\int_v^{v_j} \!\!\!\!\!Y_j(\xi_j,\ol z)^*\!\g_j(\xi_j)\d\xi_j \right)_{\!\!1}^{\!\!n}}
\end{array}
\]
where $\cP$ is the meromorphic $2n\times 2n$-matrix function given by
\begin{equation}
\label{P}
\cP(z) := \!\begin{pmatrix}0_n\!\!&\!\!0_n\\0_n\!\!&\!\!\cM(z)\end{pmatrix}
- \begin{pmatrix}-I_n\\\!\cM(z)\!\end{pmatrix}\big(\cB\cM(z)\!-\!\cA\big)^{-1} \cB \,\big(\!-\!I_n \ \cM(z)\, \big).  
\hspace{-3mm}
\end{equation}
\end{cor}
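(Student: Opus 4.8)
The plan is to derive the alternative formula for $(\mT_{\cA,\cB}-z)^{-1}$ directly from \eqref{sol-genbc-graph} in Proposition~\ref{resolvent-Dir} by undoing the reordering. Recall that $\f^\sharp(\x) = \f(\x)^\sharp = \P\,\f(\x)$ by \eqref{perm}--\eqref{fsharp}, so $\f(\x) = \P^{\rm t}\f^\sharp(\x)$, and likewise $\Y(\x,z)^\sharp = \P\,\Y(\x,z)\,\P^{\rm t}$, whence $\P^{\rm t}\,\Y(\x,z)^\sharp = \Y(\x,z)\,\P^{\rm t}$ (using $\P^{\rm t}\P = I_{2n}$). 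First I would multiply both sides of \eqref{sol-genbc-graph} by $\P^{\rm t}$ from the left. On the left-hand side this turns $\big((\mT_{\cA,\cB}-z)^{-1}\g\big)^\sharp(\x)$ into $\big((\mT_{\cA,\cB}-z)^{-1}\g\big)(\x)$. On the right-hand side I would similarly rewrite $\big((\mT_0-z)^{-1}\g\big)^\sharp(\x)$ via \eqref{sol-dir-graph}, again pulling $\P^{\rm t}$ through $\Y(\x,z)^\sharp$ to produce $\Y(\x,z)\P^{\rm t}$, and inserting $\P^{\rm t}\P = I_{2n}$ between the block matrices and the integral terms; the block matrices $\left(\begin{smallmatrix}0_n&-I_n\\0_n&\cM(z)\end{smallmatrix}\right)$, $\left(\begin{smallmatrix}0_n&0_n\\-I_n&\cM(z)\end{smallmatrix}\right)$ and the reordered integrals $\left(\int_v^{x_j}\cdots\right)_1^n{}^\sharp = \P\left(\int_v^{x_j}\cdots\right)_1^n$ then become exactly the terms displayed in the statement of the corollary, after splitting the diagonal $\cM(z)$ contributions off into the single term containing $\cP(z)$.

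Next I would handle the correction term in \eqref{sol-genbc-graph}, namely $-\Y(\x,z)^\sharp\left(\begin{smallmatrix}-I_n\\\cM(z)\end{smallmatrix}\right)(\cB\cM(z)-\cA)^{-1}\cB\big(-I_n\ \cM(z)\big)I(\g,z)^\sharp$, where $I(\g,z)$ is as in \eqref{integral}. Pulling $\P^{\rm t}$ through on the left gives $\Y(\x,z)\P^{\rm t}$ times the same middle factor, and $I(\g,z)^\sharp = \P\,I(\g,z)$, so this contributes $-\Y(\x,z)\P^{\rm t}\left(\begin{smallmatrix}-I_n\\\cM(z)\end{smallmatrix}\right)(\cB\cM(z)-\cA)^{-1}\cB\big(-I_n\ \cM(z)\big)\P\left(\int_v^{v_j}\cdots\right)_1^n$. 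Combining this with the diagonal $\cM(z)$ pieces coming from $(\mT_0-z)^{-1}$ — whose last integrals also run from $v$ to $v_j$ after the identity $\int_v^{x_j} + \int_{x_j}^{v_j} = \int_v^{v_j}$ is used only where the same matrix $\left(\begin{smallmatrix}0_n&0_n\\0_n&\cM(z)\end{smallmatrix}\right)$ multiplies both — assembles precisely into $\Y(\x,z)\P^{\rm t}\cP(z)\P\left(\int_v^{v_j}\cdots\right)_1^n$ with $\cP(z)$ as in \eqref{P}. The remaining parts of $(\mT_0-z)^{-1}$, carrying the matrices $\left(\begin{smallmatrix}0_n&-I_n\\0_n&0_n\end{smallmatrix}\right)$ and $\left(\begin{smallmatrix}0_n&0_n\\-I_n&0_n\end{smallmatrix}\right)$ and the split integrals $\int_v^{x_j}$ and $\int_{x_j}^{v_j}$, are exactly the first two lines in the displayed formula of the corollary.

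This is essentially a bookkeeping argument, so there is no serious obstacle; the only point requiring a little care is the regrouping of the $\cM(z)$-terms. In \eqref{sol-dir-graph} the matrix $\cM(z)$ appears in the $(2,2)$-block of \emph{both} integral terms (over $[v,x_j]$ and over $[x_j,v_j]$), so these two $\cM(z)$-contributions combine to a single term with integral over all of $[v,v_j]$, which is then the right place to absorb the correction term; meanwhile the off-diagonal blocks $-I_n$ keep the split integrals and cannot be merged. I would verify that the $(2,2)$-block of $\cP(z)$ in \eqref{P}, namely $\cM(z) - \cM(z)(\cB\cM(z)-\cA)^{-1}\cB\,\cM(z)$ together with the $(2,1)$-block $\cM(z)(\cB\cM(z)-\cA)^{-1}\cB$ and the top row, matches exactly the coefficient of $\P\left(\int_v^{v_j}\cdots\right)_1^n$ obtained from summing the two $\cM(z)$-pieces of $(\mT_0-z)^{-1}$ and the correction term; invertibility of $\cB\cM(z)-\cA$ for $z\in\C\setminus\R$, already noted before Proposition~\ref{resolvent-Dir}, guarantees $\cP(z)$ is meromorphic with poles only at real points. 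Thus the corollary follows immediately from \eqref{sol-genbc-graph}, as claimed.
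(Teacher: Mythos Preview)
Your proposal is correct and follows exactly the approach indicated in the paper: multiply \eqref{sol-genbc-graph} on the left by $\P^{\rm t}$, use $\P^{\rm t}\Y(\cdot,z)^\sharp=\Y(\cdot,z)\P^{\rm t}$, and regroup the $\cM(z)$-blocks from the two split integrals into a single integral over $[v,v_j]$ that absorbs the correction term to form $\cP(z)$. You have simply spelled out the bookkeeping that the paper leaves implicit when it calls the corollary ``an immediate consequence'' of \eqref{sol-genbc-graph}.
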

\begin{lem}
\label{lem:P}
For $z$,~$\zeta \in \C\setminus \R$, $z \ne \overline \zeta$, 
\begin{align*}
&\frac{\cP(z)-\cP(\zeta)^*}{z-\ol{\zeta}} \\
& =\! \binom{\cB^*}{-\cA^*} \big(\cM(\zeta)^*\!\cB^*\!\!\!-\!\cA^*\big)^{\!-\!1}  \dfrac{\cM(z)-\cM(\zeta)^*}{z-\ol \zeta} \big(\cB\,\cM(z)\!-\!\cA\big)^{\!-\!1} \big( \cB \  -\!\cA \big).
\end{align*}
\end{lem}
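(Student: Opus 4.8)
The plan is to compute the difference $\cP(z)-\cP(\zeta)^*$ directly from the explicit formula \eqref{P}, splitting it into the contribution of the diagonal block $\begin{pmatrix}0_n&0_n\\0_n&\cM(z)\end{pmatrix}$ and the contribution of the rank-type term $\begin{pmatrix}-I_n\\\cM(z)\end{pmatrix}(\cB\cM(z)-\cA)^{-1}\cB\big(-I_n\ \cM(z)\big)$, and then recombining everything so that the factor $\cM(z)-\cM(\zeta)^*$ appears in the middle. First I would record from \eqref{P} that
\[
\cP(\zeta)^* = \begin{pmatrix}0_n&0_n\\0_n&\cM(\zeta)^*\end{pmatrix}
- \binom{-I_n}{\cM(\zeta)^*}\big(\cB^*\big)\big(\cM(\zeta)^*\cB^*-\cA^*\big)^{-1}\big(-I_n\ \ \cM(\zeta)^*\big),
\]
using $(\cB\cM(\zeta)-\cA)^{-*}=(\cM(\zeta)^*\cB^*-\cA^*)^{-1}$ and $\cM(\zeta)^*=\cM(\bar\zeta)$ since $\cM$ is a matrix Nevanlinna function.

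The key algebraic trick is the standard resolvent-type identity: for the invertible matrices $X(z):=\cB\cM(z)-\cA$ and $X(\zeta)^*=\cM(\zeta)^*\cB^*-\cA^*$ one has
\[
X(\zeta)^{-*} - X(z)^{-1} = X(\zeta)^{-*}\big(X(z)-X(\zeta)^*\big)X(z)^{-1}
= X(\zeta)^{-*}\,\cB\big(\cM(z)-\cM(\zeta)^*\big)X(z)^{-1}.
\]
I would insert $I_n = X(\zeta)^* X(\zeta)^{-*}$ and $I_n = X(z)^{-1}X(z)$ into the two rank terms to create the common sandwiched factor $X(\zeta)^{-*}\cB(\cdots)\cB X(z)^{-1}$, and also rewrite the outer column/row vectors $\binom{-I_n}{\cM(z)}$ and $\binom{-I_n}{\cM(\zeta)^*}$ in terms of $\binom{\cB^*}{-\cA^*}$ and $X(z)$: indeed $X(z)\binom{-I_n}{\cM(z)}$-type manipulations are not directly available, so instead I would note the identity $\big(-I_n\ \ \cM(z)\big) = $ (something) only after pulling the resolvent factors through. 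The cleanest route is: multiply the target right-hand side out, using the block form of ${\bf J}_{2n}$ and the relation \eqref{J2nunitary} which encodes $\cB\cC^*-\cA\cD^*=I_n$ and the symmetry relations; these give precisely the cross-terms needed to cancel the diagonal block $\begin{pmatrix}0&0\\0&\cM(z)-\cM(\zeta)^*\end{pmatrix}$ against the off-diagonal pieces of the rank terms.

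Concretely, the steps in order: (i) write $\cP(z)-\cP(\zeta)^*$ as the sum $\Delta_{\mathrm{diag}} + \Delta_{\mathrm{rank}}$; (ii) expand $\Delta_{\mathrm{rank}}$ by adding and subtracting the mixed term $\binom{-I_n}{\cM(z)}X(z)^{-1}\cB\big(-I_n\ \ \cM(\zeta)^*\big)$, so that $\Delta_{\mathrm{rank}}$ becomes $\binom{-I_n}{\cM(z)}X(z)^{-1}\cB\big(\cM(\zeta)^*-\cM(z)\big)\binom{0_n}{I_n}$-type terms plus $\binom{0_n}{-I_n}\big(\cM(z)-\cM(\zeta)^*\big)(\cdots)$; (iii) use the resolvent identity above on the remaining genuine difference of inverses; (iv) collect, divide by $z-\bar\zeta$, and verify that the surviving terms assemble into $\binom{\cB^*}{-\cA^*}X(\zeta)^{-*}\frac{\cM(z)-\cM(\zeta)^*}{z-\bar\zeta}X(z)^{-1}\big(\cB\ \ -\cA\big)$, with the diagonal block $\begin{pmatrix}0&0\\0&\cM(z)-\cM(\zeta)^*\end{pmatrix}/(z-\bar\zeta)$ exactly matching the $(2,2)$-entry of that product (since the $(2,2)$-entry of $\binom{\cB^*}{-\cA^*}(\cdots)\big(\cB\ \,-\cA\big)$ contains the term $\cA^*X(\zeta)^{-*}(\cdots)X(z)^{-1}\cA$ together with $\cB^*,\cA$ cross terms that telescope via \eqref{J2nunitary}).

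The main obstacle will be bookkeeping: making sure that all the off-diagonal (block $(1,2)$ and $(2,1)$) contributions from $\Delta_{\mathrm{rank}}$ and from the stray mixed terms cancel, and that the $(1,1)$ and $(2,2)$ blocks reassemble correctly. This is where the identity $\cB\cC^*-\cA\cD^*=I_n$ from \eqref{J2nunitary} together with $\cA\cB^*=\cB\cA^*$ is essential — it is what forces $X(\zeta)^*\binom{-I_n}{\cM(\zeta)^*}$ and the column $\binom{\cB^*}{-\cA^*}$ to be compatible. There is no conceptual difficulty once the resolvent identity is applied; the only real risk is a sign or transpose slip, which I would guard against by checking the formula in the scalar case $n=1$ with $\cA=1,\cB=0$ (where $\cP(z)=\binom{-1}{m(z)}\!\cdot 0$ reduces appropriately) and in the decoupled case, and by verifying Hermiticity: the right-hand side is manifestly of the form $W^*\,\frac{\cM(z)-\cM(\zeta)^*}{z-\bar\zeta}\,V$ with $W=\big(\cB\,\cM(z)-\cA\big)^{-1}\big(\cB\ \,-\cA\big)$ evaluated at $\zeta$ and $V$ at $z$, which is consistent with $\cP$ being a matrix Nevanlinna function — indeed this lemma is precisely the computation showing the kernel $(\cP(z)-\cP(\zeta)^*)/(z-\bar\zeta)$ is positive definite, inherited from the positive definiteness of $(\cM(z)-\cM(\zeta)^*)/(z-\bar\zeta)$ in Lemma \ref{nevanlinna}.
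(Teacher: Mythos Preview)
Your overall strategy --- compute $\cP(z)-\cP(\zeta)^*$ directly from \eqref{P} and massage it into the factored form --- is the same as the paper's, but the execution has two real problems.

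First, the ``key algebraic trick'' is stated incorrectly. With $X(z)=\cB\cM(z)-\cA$ and $X(\zeta)^*=\cM(\zeta)^*\cB^*-\cA^*$ one has
\[
X(z)-X(\zeta)^* \;=\; \cB\cM(z)-\cM(\zeta)^*\cB^* -\cA+\cA^*,
\]
which is \emph{not} $\cB(\cM(z)-\cM(\zeta)^*)$ unless $\cA$, $\cB$ are Hermitian and $\cB$ commutes with $\cM(\zeta)^*$. So the resolvent identity you wrote does not produce the middle factor you need, and any argument built on that step fails.

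Second, the matrices $\cC,\cD$ and the relation \eqref{J2nunitary} are a red herring here: neither the statement nor the proof of the lemma involves $\cC,\cD$. What is actually needed --- and what you are missing --- is the identity
\[
(\cB\,\mathcal X-\cA)^{-1}\cB \;=\; \cB^*(\mathcal X\,\cB^*-\cA^*)^{-1}
\]
for any $\mathcal X$ making $\cB\mathcal X-\cA$ invertible; this follows immediately from $\cA\cB^*=\cB\cA^*$ alone (see \eqref{X} in the paper). With this in hand, the paper simply checks the claimed matrix equality block by block. For instance, the $(1,1)$-block of $\cP(z)-\cP(\zeta)^*$ is
\[
-(\cB\cM(z)-\cA)^{-1}\cB+\cB^*(\cM(\zeta)^*\cB^*-\cA^*)^{-1};
\]
applying the displayed identity to the first term converts it to $-\cB^*(\cM(z)\cB^*-\cA^*)^{-1}$, after which both inverses have arguments of the form $\mathcal X\cB^*-\cA^*$ and the ordinary resolvent identity legitimately yields the factor $\cM(z)-\cM(\zeta)^*$. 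The $(2,2)$-block and the off-diagonal blocks are handled the same way. Your fallback plan of ``multiply the target right-hand side out'' would also work, but it requires exactly the same identity \eqref{X} to close, not \eqref{J2nunitary}.
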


\begin{proof}
First we note that for any ${\mathcal X}\!\in\!M_n(\C)$ such that $\cB {\mathcal X} \!-\! \cA$ is invertible, by~\eqref{AB},
\begin{equation}
\label{X}
 (\cB {\mathcal X} -\cA)^{-1} \cB = \cB^* ( {\mathcal X} \cB^* - \cA^*)^{-1}.
\end{equation}
Let $z$,~$\zeta \!\in\! \C\!\setminus\!\R$, $z \!\ne\! \overline \zeta$.
The equality of the left upper entries in the claimed matrix identity follows since, by~\eqref{X},
\begin{align*}
&- (\cB\cM(z)-\cA)^{-1} \cB + \cB^* (\cM(\zeta)^*\cB^*-\cA^*)^{-1} \\
&= - \cB^* ( \cM(z) \cB^* - \cA^*)^{-1}  + \cB^* (\cM(\zeta)^*\cB^*-\cA^*)^{-1} \\
&= \cB^* (\cM(\zeta)^*\cB^*-\cA^*)^{-1} \big( \cM(z) - \cM(\zeta)^* \big) \cB^* ( \cM(z) \cB^* - \cA^*)^{-1}\\
&= \cB^* (\cM(\zeta)^*\cB^*-\cA^*)^{-1} \big( \cM(z) - \cM(\zeta)^* \big) (\cB \cM(z) - \cA)^{-1} \cB.
\end{align*}
The equality of the right lower entries follows since, again by~\eqref{X},
\begin{align*}
&\cM(z) \!-\! \cM(z) (\cB\cM(z)\!-\!\cA)^{-\!1} \cB \cM(z)  \!\!\\
& \hspace{3.5mm}- \cM(\zeta)^* +\!  \cM(\zeta)^* \cB^* (\cM(\zeta)^*\cB^*\!\!-\!\cA^*)^{-\!1} \cM(\zeta)^*\\
&=- \cM(z) (\cB\cM(z)-\cA)^{-1} \cA + \cM(\zeta)^* (\cB \cM(\zeta)^* - \cA)^{-1} \cA\\
&= \big( - \cM(z) \!+\! \cM(\zeta)^* (\cB \cM(\zeta)^* \!-\! \cA)^{-1} (\cB\cM(z)\!-\!\cA) \big) (\cB\cM(z)\!-\!\cA)^{-1} \cA \\
&= \big( - I + \cM(\zeta)^* (\cB \cM(\zeta)^*- \cA)^{-1} \cB \big) (\cM(z) \!-\! \cM(\zeta)^* )  (\cB\cM(z)\!-\!\cA)^{-1} \cA\\
&= \big( - I + \cM(\zeta)^* \cB^* (\cM(\zeta)^*\cB^*\!-\!\cA^*)^{-1} \big) (\cM(z) \!-\! \cM(\zeta)^* )  (\cB\cM(z)\!-\!\cA)^{-1} \cA\\
& = \cA^*  (\cM(\zeta)^*\cB^*\!-\!\cA^*)^{-1} (\cM(z) \!-\! \cM(\zeta)^* )  (\cB\cM(z)\!-\!\cA)^{-1} \cA.
\end{align*}
The proof of the equality of the off-diagonal entries is similar.
\end{proof}

The following is our first main result.

\begin{thm}
\label{thm:kres}
{\rm (i)}\! The self-adjoint extensions of the symmetric operator $\mS$~in $\L^2(\mathcal G)$
are the restrictions $\mT_{\cA,\cB}$ of the operator $\mS^*$ given by vertex conditions
\[
\cA\,\f_1^\sharp(v) + \cB \f_2^\sharp(v)=0,
\]
with $\cA$, $\cB \in M_n(\C)$  such that 
\[
{\rm rank}\,\big(\cA\ \ \cB\big)=n,\quad \cA \cB^*=\cB\cA^*.
\]
{\rm (ii)} 
The resolvent of $\,\mT_{\cA,\cB}$ satisfies
\begin{equation}
\label{kres}
  \ \ 
  (\mT_{\cA,\cB}-z)^{-1}=(\mT_0-z)^{-1}-\Gamma_z(\cB\cM(z)-\cA)^{-1}\cB\,\Gamma_{\ol z}^*
\end{equation}
for $z\in\rho(\mT_0)\cap\rho(\mT_{\cA,\cB})$ with $\mT_0\!=\!\mT_{I_n,0_n}$ and $\Gamma_z\!:\!\C^n \!\to\! \L^2(\cG)$ 
\vspace{-1mm} given~by
\begin{equation}
\label{stockholm2} 
  \Gamma_z c:= \Y(\cdot,z) \,
  \P^{\rm t} \!\begin{pmatrix}-I_n\\ \cM(z)\end{pmatrix} c, \quad c \in \C^n,
\end{equation}%
where $\P$ is the permutation matrix in \eqref{perm}.
\\[1mm]
{\rm(iii)}
The fractional linear transformation $\cW_{\cA,\cB} \in M_n(\C)$ of $\cM$,
\[
   \cW_{\cA,\cB}(z)=\big(\cD\cM(z)-\cC\big)\big(\cB\cM(z)-\cA\big)^{-1}, \quad z\in\C\setminus\R,
\]
with $\cC$, $\cD \in M_n(\C)$ related to $\cA$, $\cB$ as in Lemma $\ref{lemma1}$, is a meromorphic matrix Nevanlinna function and
\begin{equation}
\label{Qfunction}
 (\mT_{\cA,\cB}-z)^{-1}= \Y(\cdot,z) \,\P^{\rm t} \binom{\cB^*}{-\cA^{*}}  \cW_{\cA,\cB}(z) \Big( \cB \  -\!\!\cA \Big) \P I(\cdot,z) + {\mathcal F}(z)
\end{equation}
where $I(\cdot,z)$ is given by \eqref{integral} and  ${\mathcal F}$ is an entire function whose values are bounded linear operators in $\L^2(\cG)$.
\end{thm}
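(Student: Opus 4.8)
Part (i) is merely a restatement of Proposition~\ref{sa}, so only (ii) and (iii) require work. For (ii) the plan is to read off \eqref{kres} directly from the resolvent formula \eqref{sol-genbc-graph} in Proposition~\ref{resolvent-Dir}. I would first rewrite the correction term of \eqref{sol-genbc-graph} in terms of the operators $\Gamma_z$ of \eqref{stockholm2}: from $\Y(\cdot,z)^\sharp=\P\Y(\cdot,z)\P^{\rm t}$ one gets $(\Gamma_z c)^\sharp=\Y(\cdot,z)^\sharp\binom{-I_n}{\cM(z)}c$, and a short computation with the inner product of $\L^2(\cG)$ --- using $\cM(\ol z)^{*}=\cM(z)$ and $Y_j(\cdot,\ol z)=\ol{Y_j(\cdot,z)}$ --- shows $\Gamma_{\ol z}^{*}\g=\big(-I_n\ \ \cM(z)\big)\,I(\g,z)^\sharp$ with $I(\g,z)$ as in \eqref{integral}. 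Substituting both identities into the correction term of \eqref{sol-genbc-graph} and applying $\P^{\rm t}$ gives \eqref{kres} for $z\in\C\setminus\R$; since both sides are analytic on $\rho(\mT_0)\cap\rho(\mT_{\cA,\cB})$, the identity persists there by analytic continuation.

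For (iii) I would first record what is needed from the ${\bf J}_{2n}$-relation \eqref{J2nunitary}: with $U:=\begin{pmatrix}\cC&\cD\\\cA&\cB\end{pmatrix}$ one has $U^{*}{\bf J}_{2n}U={\bf J}_{2n}$ and $U^{-1}=\begin{pmatrix}\cB^{*}&-\cD^{*}\\-\cA^{*}&\cC^{*}\end{pmatrix}$, which together encode the scalar identities $\cA^{*}\cC=\cC^{*}\cA$, $\cB^{*}\cD=\cD^{*}\cB$, $\cC^{*}\cB-\cA^{*}\cD=I_n$, $\cB^{*}\cC-\cD^{*}\cA=I_n$. From the definition of $\cW_{\cA,\cB}$ one reads off $\binom{\cW_{\cA,\cB}(z)}{I_n}=U\binom{-I_n}{\cM(z)}\big(\cB\cM(z)-\cA\big)^{-1}$; then computing $\binom{\cW_{\cA,\cB}(z)}{I_n}^{*}{\bf J}_{2n}\binom{\cW_{\cA,\cB}(\zeta)}{I_n}$ in two ways (directly, and via $U^{*}{\bf J}_{2n}U={\bf J}_{2n}$) yields, in the spirit of Lemma~\ref{lem:P}, the kernel identity
\[
 \frac{\cW_{\cA,\cB}(z)-\cW_{\cA,\cB}(\zeta)^{*}}{z-\ol\zeta}
 =\big(\cB\cM(\zeta)-\cA\big)^{-*}\,\frac{\cM(z)-\cM(\zeta)^{*}}{z-\ol\zeta}\,\big(\cB\cM(z)-\cA\big)^{-1},\qquad z,\zeta\in\C\setminus\R,\ z\neq\ol\zeta.
\]
Taking $\zeta=\ol z$ and using $\cM(\ol z)=\cM(z)^{*}$ gives $\cW_{\cA,\cB}(\ol z)=\cW_{\cA,\cB}(z)^{*}$; letting $\zeta\to z$ gives $\Im\cW_{\cA,\cB}(z)/\Im z\ge0$, and together with the positive definiteness of the kernel of the matrix Nevanlinna function $\cM$ (Lemma~\ref{nevanlinna}) it gives positive definiteness of the kernel of $\cW_{\cA,\cB}$; meromorphy is immediate since $\cM$ is meromorphic and $\det(\cB\cM(z)-\cA)\not\equiv0$. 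Hence $\cW_{\cA,\cB}$ is a meromorphic matrix Nevanlinna function.

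To obtain the representation \eqref{Qfunction} I would start from Corollary~\ref{PW} and rewrite the matrix $\cP(z)$ in \eqref{P} in terms of $\cW_{\cA,\cB}$. Using $U^{-1}$ and the relation $\binom{\cW_{\cA,\cB}(z)}{I_n}=U\binom{-I_n}{\cM(z)}(\cB\cM(z)-\cA)^{-1}$ one gets $\binom{-I_n}{\cM(z)}(\cB\cM(z)-\cA)^{-1}=\binom{\cB^{*}}{-\cA^{*}}\cW_{\cA,\cB}(z)+\binom{-\cD^{*}}{\cC^{*}}$, and inserting this into \eqref{P} and collapsing the products with the four scalar identities above shows that $\cP(z)-\binom{\cB^{*}}{-\cA^{*}}\cW_{\cA,\cB}(z)\big(\cB\ \ -\cA\big)$ is a constant (in particular entire) $2n\times2n$-matrix. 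Plugging this splitting into the formula of Corollary~\ref{PW}, the term carrying $\cW_{\cA,\cB}$ becomes the singular part in \eqref{Qfunction}, while the remainder --- assembled from $\Y(\cdot,z)$, $\P^{\rm t}$, that constant matrix and the two integral operators with bounded kernels appearing in Corollary~\ref{PW}, all entire in $z$ with values bounded operators in $\L^2(\cG)$ --- is the required entire operator function ${\mathcal F}$. The only genuinely delicate point throughout is the bookkeeping: keeping track of the permutation $\P$ and of the fact that $\cM(z)$ does not commute with $\cA,\cB,\cC,\cD$ while collapsing the products; conceptually nothing is at stake, the two ${\bf J}_{2n}$-identities for $U$ and $U^{-1}$ together with Lemma~\ref{lemma1} carry the whole computation.
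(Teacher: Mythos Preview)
Your proposal is correct and follows essentially the same route as the paper: part (i) via Proposition~\ref{sa}, part (ii) by unpacking the correction term of \eqref{sol-genbc-graph} as $\Gamma_z(\cB\cM(z)-\cA)^{-1}\cB\,\Gamma_{\ol z}^{*}$ (the paper records the same adjoint formula \eqref{stockholm5}), and the Nevanlinna property of $\cW_{\cA,\cB}$ via the ${\bf J}_{2n}$-identity \eqref{J2nunitary} exactly as the paper does.

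The one place where you deviate is the derivation of \eqref{Qfunction}: you propose to show directly, via $U^{-1}$ and the four scalar identities, that $\cP(z)-\binom{\cB^{*}}{-\cA^{*}}\cW_{\cA,\cB}(z)\,(\cB\ -\cA)$ is a constant matrix. The paper instead combines the kernel identity you already have for $\cW_{\cA,\cB}$ with Lemma~\ref{lem:P} to conclude that $\cP$ and $\binom{\cB^{*}}{-\cA^{*}}\cW_{\cA,\cB}\,(\cB\ -\cA)$ have the \emph{same} Nevanlinna kernel, whence their difference is a constant Hermitian matrix. Your direct computation works but is the more laborious of the two, since after inserting $\binom{-I_n}{\cM(z)}(\cB\cM(z)-\cA)^{-1}=\binom{\cB^{*}}{-\cA^{*}}\cW_{\cA,\cB}(z)+\binom{-\cD^{*}}{\cC^{*}}$ you still have to eliminate the remaining $\cM(z)$-dependence coming from the right factor $\cB(-I_n\ \cM(z))$ and from the block $\begin{pmatrix}0_n&0_n\\0_n&\cM(z)\end{pmatrix}$; this requires the companion identity $(\cM(z)\cB^{*}-\cA^{*})^{-1}(-I_n\ \cM(z))=\cW_{\cA,\cB}(z)(\cB\ -\cA)+(-\cD\ \cC)$ (the adjoint of yours, via $\cM(\ol z)=\cM(z)^{*}$) together with \eqref{X}. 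The paper's kernel-comparison argument sidesteps this bookkeeping entirely, which is what Lemma~\ref{lem:P} was set up for.
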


\begin{proof}
Claim (i) was proved in Proposition \ref{sa}.
To prove (ii) we multiply \eqref{sol-genbc-graph} with $\P^{-1}=\P^{\rm t}$ from the left, note $\P^{\rm t} \Y(\cdot,\la)^\sharp = \Y(\cdot,\la) \P^{\rm t}$, and
that, for $z\in \C\setminus \R$, the adjoint $\Gamma_z^*:\L^2(\cG)\to \mathbb \C^n$ of  $\Gamma_z$  in \eqref{stockholm2} is given by
\begin{equation}
 \label{stockholm5}
 \Gamma_z^* \g =  \Big(\!\!-\!I_n \ \,\cM(\ol z) \Big) \P I(\g,\ol z), \quad \g \in \L^2(\cG).
\end{equation}
\noindent
(iii) The choice of $\cC$, $\cD$ in Lemma \ref{lemma1} ensures that  \eqref{J2nunitary} holds. 
This implies that, for $z$,~$\zeta \in \C\setminus \R$, $z \ne \overline \zeta$,
\begin{align*}
&\frac{\cW_{\cA,\cB}(z)-\cW_{\cA,\cB}(\zeta)^*}{z-\ol{\zeta}}  = \big(\cM(\zeta)^*\!\cB^*\!\!\!-\!\cA^*\big)^{\!-\!1}  \dfrac{\cM(z)-\cM(\zeta)^*}{z-\ol \zeta} \big(\cB\cM(z)\!-\!\cA\big)^{\!-\!1} 
\end{align*}
and hence $\cW_{\cA,\cB}$ is a matrix Nevanlinna function. Moreover, by Lemma \ref{lem:P},
\[
 \frac{\cP(z)-\cP(\zeta)^*}{z-\ol{\zeta}}  = \binom{\cB^*}{- \cA^*} \frac{\cW_{\cA,\cB}(z)-\cW_{\cA,\cB}(\zeta)^*}{z-\ol{\zeta}} \big( \cB \  -\!\cA \big)
\]
and thus  $\cP (z) \!=\! \displaystyle \begin{pmatrix} \! \cB^*\! \\[-1mm] \!-\cA^*\! \end{pmatrix} \! \cW_{\cA,\cB} (z)  \big( \cB \  -\!\cA \big) + X$ with  $X\!=\!X^*\!\in\! M_n(C)$ for $z\!\in\!\C\setminus\R$.  Now \eqref{Qfunction} follows 
from the represent\-ation of $(T_{\cA,\cB}\!-\!z)^{-1}$ in Corollary \ref{PW} if we note that $z\mapsto \Y(\cdot,z)$ is entire.
\end{proof}

\begin{rem}
By \eqref{stockholm5} and \eqref{bfY} we have, for $z\!\in\!\C\setminus\R$ and  $c\!=\!(c_j)_1^n \!\in\! \C^n$, \vspace{-1mm}$\g =(\g_j)_1^n \in \L^ 2(\cG)$,
\begin{align}
\label{gammafield}
  &\Gamma_z (c_j)_1^n = \Big( c_j \y_j(\cdot,z) \Big)_1^n, \ \  \y_j(\cdot,z) := Y_j(\cdot,z) \binom{-1}{m_j(z)} \in \ker (S_j^*-z), \\[-2mm]
\label{gammafield*}  
  &\Gamma_z^* \g = \Big( \int_v^{v_j} \!\!\!\y_j(\xi_j,\ol z)^{\rm t} \g_j(\xi_j) \d \xi_j \Big)_1^n. \\[-6mm] \nonumber
\end{align}
Hence $\ran \Gamma_z = \ker (\mS^*-z) = \ran (\mS - \ol z)^\perp$ is the defect space of $\mS$ at $\ol z$. 
\end{rem}

Note that, due to the non-uniqueness of the boundary matrices $\cA$, $\cB$, two parameter pairs $\cA$, $\cB$ and $\cA'$, $\cB'$ generate the same self-adjoint extension if the linear relations $\cB^{-1}\cA$ and $(\cB')^{-1}\cA'$ coincide; in this case the subspaces defined by the corresponding boundary conditions coincide.

\smallskip

The relation \eqref{kres} is a version of M.G.~Krein's resolvent formula (comp.\ \cite{MR0282239}). It describes  the resolvents of all the self-adjoint extensions of $\mS$ within $\L^2(\cG)$,  
with the decoupled extension $\mT_0$ fixed, by means of pairs of $n\times n$ matrices $\cA$, $\cB$ satisfying \eqref{AB}.
The same formula with $z$-dependent matrices $\cA$, $\cB$ describes all self-adjoint extensions of $\mS$ with exit, i.e.\ in Hilbert spaces of which $\L^2(\cG)$ is a subspace.

\begin{rem}
\label{fourier}
The resolvent formula in \eqref{Qfunction} may be used to derive eigenfunction expansions for the Dirac-Krein operator $\mT_{\cA,\cB}$. 
The poles of the meromor\-phic matrix Ne\-vanlinna function $\cW_{\cA,\cB}$ are the eigenvalues of $\mT_{\cA,\cB}$. Since the resolvent difference of $\mT_0$ and $\mT_{\cA,\cB}$ has finite rank by \eqref{kres}
and the eigenvalues of each $T_j$, whence those of $\mT_0 \!=\! \bigoplus_{j=1}^n \!T_j$, accumulate only at $-\infty$ and $+\infty$, so do the eigenvalues of~$\mT_{\cA,\cB}$;
we denote them by $(\la_k)_{k\in\Z}$ with $\la_k \!< \!\la_{k+1}$, $k\!\in\!\Z$.  The principal part of $\cW_{\cA,\cB}(z)$ at $\la_k$ is of the form $\dfrac{-W_{\cA,\cB,k}}{z\!-\!\la_k}$ with non-negative 
$W_{\cA,\cB,k} \!\in\! M_n(\C)\backslash \{0_n\}$~of rank equal to the multiplicity of $\la_k$. Then the functional calculus and the residue theorem applied to \eqref{Qfunction} imply that, for all bounded intervals $\Delta \!\subset\!\R$,
$\f\!=\!(\f_j)_1^n\!\in\! \L^2(\cG)$, and $\x\in\cG$, 
\begin{align*}
&\big(E(\Delta)\f\big)(\x) \\
&\!=\!\!\!\sum_{\la_k\in \Delta} \!\!\!\Y(\x,\la_k) \P^{\rm t} \!\!\begin{pmatrix} \cB^*\\-\cA^*\end{pmatrix}  W_{\cA,\cB,k} 
\big(\cB\ \, -\!\cA\big) \P \!\Big( \!\int_v^{v_j}\!\!\!\!Y_j(\xi_j,\la_k)^*\f_{\!j}(\xi_j)\d\xi_j \Big)_1^{n\,\sharp}\!\!. 
\end{align*}
Letting $\Delta\!=\!(-R,R)$ and $R\!\to\!\infty$, we arrive at the eigenfunction \vspace{-1mm}  expansion
\[
\f\!=\!\sum_{k=-\infty}^\infty \!\! \Y(\cdot,\la_k)  \P^{\rm t} \!\! \begin{pmatrix} \cB^*\\-\cA^*\end{pmatrix}  W_{\cA,\cB,k} 
\big(\cB\ \, -\!\cA\big) \P \! \Big( \int_v^{v_j}\!\!\!Y_j(\xi_j,\la_k)^*\f_j(\xi_j)\d\xi_j \Big)_1^{n\,\sharp} 
\]
in $\L^2(\cG)$. Hence the \vspace{-1mm}  mapping 
\[
\f \mapsto \big(\cB\ \, -\!\cA\big) \P \Big( \int_v^{v_j}\!\!\!Y_j(\xi_j,\la_k)^*\f_j(\xi_j)\d\xi_j \Big)_1^{n\,\sharp} 
= \big(\cB\  -\!\cA\big) \P I(\f,\la_k)^\sharp 
\]
can be considered as the Fourier transformation associated with $\mT_{\cA,\cB}$, and Parseval's relation takes the form
\[
(\f,\g)_{\L^2(\cG)}= \sum_{k=-\infty}^\infty I(\g,\la_k)^{\rm t}\P^{\rm t}\binom{\cB^*}{-\cA^*} W_{\cA,\cB,k} \big(\cB\  -\!\cA\big)  \P I(\f,\la_k)^{\sharp}.
\]
\ct{Unlike the single interval case $n=1$ (see e.g.\ \cite{MR3147401}), for star graphs with $n>1$ edges the Fourier transformation is no longer scalar but $n$-dimensional  
and hence higher spectral multiplicities may occur, see Theorem \ref{thmTK} in the next section.}
\end{rem}

\section{Eigenvalues and multiplicities for Robin matching conditions}
\label{sec:tau}

In this subsection, for the self-adjoint extension induced by the Robin matching conditions \eqref{k1}, \eqref{k2}  at the central vertex, we derive 
a characteristic equation for the eigenvalues in terms of the Weyl--Titchmarsh functions $m_j$ on the edges, investigate their multiplicities, 
and we prove a trace formula for the resolvent difference of $\mT_\tau$ and $\mT_0$.

To this end, we first specialize the resolvent formulas established in Theorem~\ref{thm:kres}.  
For the Robin matching conditions \eqref{k1}, \eqref{k2} with $\tau \ne \pm\infty$, we choose the matrices $\cA=\cA_\tau$, $\cB=\cB_\tau$ as in \eqref{Kirch}, 
and correspondingly $\cA_{\pm\infty}$ and $\cB_{\pm\infty}$ for $\tau=\pm\infty$.
The associated self-adjoint operator is denoted by  $\mT_\tau:= \mT_{\cA_\tau,\cB_\tau}$. 
Note that for $\tau=0$ the operator coincides with $\mT_0=\mT_{I_n,0_n}$.
Hence we assume that $\tau\ne 0$ in the following if not explicitly stated otherwise.

Straightforward calculations yield that 
\[
(\cB_\tau\cM(z)\!-\!\cA_\tau)^{\!-\!1}\cB_\tau
=\dfrac 1{\dfrac 1\tau+\!\displaystyle\sum_{j=1}^n\!m_j(z)}\!\!\begin{pmatrix}1\\1\\[-1.5mm]\vdots\\1\end{pmatrix}\!\!
\begin{pmatrix}1\!\!&\!\!1\!\!&\!\!\cdots\!\!&\!\!1\end{pmatrix}\!.
\]
Since all $m_j$, $j\!=\!1,2,\dots,n$, are Nevanlinna functions, so is the \vspace{-2mm} function 
\begin{equation}
\label{tw}
   \m_\tau(z):= \dfrac 1\tau + \sum_{j=1}^n m_j(z), \quad z\in\C\setminus\R.
\end{equation}

Using the above formula for $(\cB_\tau\cM(z)\!-\!\cA_\tau)^{\!-\!1}\cB_\tau$ and the formulas \eqref{gammafield}, \eqref{gammafield*} for $\Gamma_z$,~$\Gamma_{\ol z}^*$, 
it is immediate from Theorem \ref{thm:kres} that, for the Robin matching conditions \eqref{k1}, \eqref{k2}, Krein's resolvent formula \eqref{kres} takes the following form.

\begin{cor}
For $z\!\in\! \C\!\setminus\! \R$, $\g \!=\! (\g_j)_1^n \!\in\! {\bf L}^2(\cG)$ and $\x\!=\!(x_j)_1^n$, $x_j\!\in\!e_j\!=\![v,v_j]$, 
\begin{align}
\nonumber
  &\left((\mT_\tau-z)^{-1}\g\right)\!(\x)\\
\label{kres-standard}
  &=\left((\mT_0-z)^{-1}\g\right)\!(\x)
    - \frac 1{\m_\tau(z)}\! 
  \bigg(\displaystyle\!\sum_{\ell=1}^n\int_v^{v_\ell}\!\!\!\!\!\y_\ell(\xi_j,\ol z)^{\rm t}\g_\ell(\xi_j)\,\d\xi_j\!\bigg) \! \begin{pmatrix} \y_{\!j}(x_j,z) \end{pmatrix}_{1}^{n} \hspace{-0.5mm} \\[-1mm]
\nonumber
& \,= \left((\mT_0-z)^{-1}\g\right)\!(\x) - \frac 1{\m_\tau(z)} \big( \y(\cdot,\ol z), \g \big)_{\L^2(\cG)} \y (\x,z),
\end{align}
where $\y(\cdot,z) = (\y_j(\cdot,z))_1^n$, $\,\y_j(\cdot,z) = Y_j(\cdot,z) \displaystyle \binom {-1}{m_j(z)}$, and
\begin{equation}
\label{expl}
\begin{array}{rl}
\left((\mT_0-\ z)^{-1}\g\right)\!(\x)
= \Big( \big( (T_j-z)^{-1} \g_j \big) (x_j) \Big)_1^n
\end{array}
\end{equation}
with $(T_j-z)^{-1}\g_j$ given by \eqref{sol}.
\end{cor}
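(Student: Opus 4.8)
The plan is to specialize the general formulas from Theorem~\ref{thm:kres}
to the concrete matrices $\cA_\tau$, $\cB_\tau$ in \eqref{Kirch}. The key
observation is that $\cB_\tau$ has rank one: writing $\e:=(1\ 1\ \cdots\ 1)^{\rm t}\in\C^n$,
we have $\cB_\tau = \tau\, \e\, \e^{\rm t}$ in the Robin case $\tau\ne\pm\infty$. Hence the
rank-$n$ correction term in \eqref{kres} collapses to a rank-one term, and the whole
task reduces to computing the single scalar $\e^{\rm t}(\cB_\tau\cM(z)-\cA_\tau)^{-1}\cB_\tau\e$
(or, more precisely, the $n\times n$ matrix $(\cB_\tau\cM(z)-\cA_\tau)^{-1}\cB_\tau$).

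First I would verify the stated identity
\[
(\cB_\tau\cM(z)-\cA_\tau)^{-1}\cB_\tau
=\frac{1}{\dfrac 1\tau+\sum_{j=1}^n m_j(z)}\,\e\,\e^{\rm t}
=\frac{1}{\m_\tau(z)}\,\e\,\e^{\rm t}.
\]
To do this it suffices to check that $(\cB_\tau\cM(z)-\cA_\tau)\,\e\,\e^{\rm t}
= \m_\tau(z)\,\cB_\tau$. Since $\cA_\tau\e = 0$ (the rows of $\cA_\tau$ sum to zero,
including the last row $(-1,0,\dots,0)$ acting on $\e$ gives $-1$; one has to be a
little careful here and rather compute $\cA_\tau\e$ explicitly: the first $n-1$
rows give $0$ and the last row gives $-1$, so $\cA_\tau\e = -\widehat e_n$ where
$\widehat e_n$ is the $n$th standard basis vector), and $\cB_\tau\cM(z)\e
= \tau\,\e\,\e^{\rm t}\cM(z)\,\e = \tau\big(\sum_j m_j(z)\big)\e$, one gets
$(\cB_\tau\cM(z)-\cA_\tau)\e = \tau\big(\sum_j m_j(z)\big)\e + \widehat e_n$.
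Multiplying on the right by $\e^{\rm t}$ and comparing with $\m_\tau(z)\cB_\tau
= \big(1 + \tau\sum_j m_j(z)\big)\e\,\e^{\rm t}$ then forces checking the row/column
structure; the cleanest route is simply to confirm $\cA_\tau$ is invertible and
compute $(\cB_\tau\cM(z)-\cA_\tau)^{-1}$ by the Sherman--Morrison formula applied to
the rank-one perturbation $\tau\,\e\,(\cM(z)^{\rm t}\e)^{\rm t}$ of $-\cA_\tau$, which
yields exactly the claimed expression with denominator $\m_\tau(z)$. That
$\m_\tau(z)\ne 0$ for $z\in\C\setminus\R$ (so the inverse exists) follows because each
$m_j$ is a Nevanlinna function with $\Im m_j(z)>0$ for $\Im z>0$ by
Lemma~\ref{nevanlinna}, hence $\Im\m_\tau(z) = \sum_j\Im m_j(z)>0$ there, and since
$\m_\tau$ is a Nevanlinna function itself.

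Once this identity is in hand, the rest is substitution. Using \eqref{gammafield}
and \eqref{gammafield*}, which give $\Gamma_z(c_j)_1^n = (c_j\y_j(\cdot,z))_1^n$ and
$\Gamma_{\ol z}^*\g = \big(\int_v^{v_j}\y_j(\xi_j,z)^{\rm t}\g_j(\xi_j)\,\d\xi_j\big)_1^n$,
one plugs into \eqref{kres}:
\[
(\mT_\tau-z)^{-1}\g = (\mT_0-z)^{-1}\g
- \Gamma_z\,\frac{1}{\m_\tau(z)}\,\e\,\e^{\rm t}\,\Gamma_{\ol z}^*\g.
\]
Now $\e^{\rm t}\Gamma_{\ol z}^*\g = \sum_{\ell=1}^n\int_v^{v_\ell}\y_\ell(\xi_j,\ol z)^{\rm t}\g_\ell(\xi_j)\,\d\xi_j
= (\y(\cdot,\ol z),\g)_{\L^2(\cG)}$ since $\y_\ell(\cdot,\ol z)$ is real up to the
appropriate conjugation (here one uses $Y_j(\cdot,\ol z)=\ol{Y_j(\cdot,z)}$ and
$m_j(\ol z)=\ol{m_j(z)}$, so $\y_j(\cdot,\ol z) = \ol{\y_j(\cdot,z)}$, making the
transpose coincide with the adjoint inner product), and $\Gamma_z\e = (\y_j(\cdot,z))_1^n = \y(\cdot,z)$.
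This gives precisely \eqref{kres-standard}, and \eqref{expl} is just the
restatement that $\mT_0$ decouples into the $T_j$, whose resolvents are given
componentwise by \eqref{sol} as established in Proposition~\ref{kreinresolventj}.

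The main obstacle, such as it is, is purely bookkeeping: keeping track of the
permutation matrix $\P$ relating the $\sharp$-ordering to the edge-ordering, and
making sure the complex conjugation in $\Gamma_{\ol z}^*$ versus the transpose in
\eqref{gammafield*} is handled consistently so that the real bilinear pairing
$\int\y_\ell(\xi_j,\ol z)^{\rm t}\g_\ell$ genuinely equals the sesquilinear
$(\y(\cdot,\ol z),\g)_{\L^2(\cG)}$. The $\tau=\pm\infty$ case is handled by the same
computation with $\cA_{\pm\infty}$, $\cB_{\pm\infty}$ in place of $\cA_\tau$, $\cB_\tau$,
which amounts formally to setting $1/\tau = 0$ in $\m_\tau$; alternatively one notes
that the resolvent depends continuously on the boundary matrices and passes to the
limit. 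Since the statement as written only asserts the formula for $z\in\C\setminus\R$
and $\tau\ne 0$, no further subtleties arise.
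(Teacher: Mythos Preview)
Your approach is exactly the paper's: compute $(\cB_\tau\cM(z)-\cA_\tau)^{-1}\cB_\tau$ explicitly and substitute into Krein's formula \eqref{kres} using \eqref{gammafield}, \eqref{gammafield*}. The paper simply asserts the matrix identity as a ``straightforward calculation'' and says the rest is immediate, so your write-up is in fact more detailed than the original.

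There is one slip worth fixing. From \eqref{Kirch}, the matrix $\cB_\tau$ has only its last row nonzero, so $\cB_\tau = \tau\,\widehat e_n\,\e^{\rm t}$, \emph{not} $\tau\,\e\,\e^{\rm t}$. This propagates: $\cB_\tau\cM(z)\e = \tau\big(\sum_j m_j(z)\big)\widehat e_n$, and hence
\[
(\cB_\tau\cM(z)-\cA_\tau)\,\e = \tau\Big(\sum_j m_j(z)\Big)\widehat e_n + \widehat e_n = \tau\,\m_\tau(z)\,\widehat e_n,
\]
so that $(\cB_\tau\cM(z)-\cA_\tau)\,\e\,\e^{\rm t} = \m_\tau(z)\,\tau\,\widehat e_n\,\e^{\rm t} = \m_\tau(z)\,\cB_\tau$. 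With the correct form of $\cB_\tau$ the direct verification you began goes through in one line and Sherman--Morrison is unnecessary (note also that the rank-one perturbation is $\tau\,\widehat e_n\,(\cM(z)\e)^{\rm t}$, not $\tau\,\e\,(\cM(z)\e)^{\rm t}$). Your remark that the conjugation bookkeeping between $\Gamma_{\ol z}^*$ and the inner-product form needs care is apt; the paper is itself somewhat loose on this point.
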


In Theorem \ref{thmTK} below we show that the set of eigenvalues splits into two parts, 
simple eigenvalues that depend on $\tau$ and possibly non-simple eigenvalues that are 
independent of $\tau$.

\begin{lem}
   \label{lem:TKeigenvaluessimple}
   Let $z\in\mathbb C$ be  not a pole of $\m_\tau$, i.e.\ not a pole of any $m_j$, $j=1,2$, $\dots,n$.
   Then 
   \begin{align}
   \label{eigenvalue1}
       z\in\sigma_{\rm p}(\mT_\tau) \ 
       & \iff \ \cB_\tau \cM(z) \!-\! \cA_\tau \mbox{ is not invertible} 
       \ \iff \ \m_\tau(z)=0. \hspace{-1mm} 
   \end{align}
   These eigenvalues of $\mT_\tau$ are simple, with eigenfunction
   $\y(\cdot,z) \!=\! \big(\y_j(\cdot,z)\big)_1^n$ given by
   $\y_{\!j}(\cdot,z)  \!=\!  u_j
   Y_{\!j}(\cdot, z)\!
   \begin{pmatrix} \!-1\! \\ \!m_j(z)\! \end{pmatrix}$
   where $\bu\!=\!(u_j)_1^n \!\ne\! 0$  
   is a solution of $\big(\cB_\tau \cM(z) \!-\! \cA_\tau \big)\bu\!=\!0$.
\end{lem}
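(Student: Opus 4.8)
The plan is to combine the Krein resolvent formula from Theorem~\ref{thm:kres}(ii) with the general principle that, at a point $z\in\rho(\mT_0)$, the operator $\mT_{\cA,\cB}-z$ fails to be boundedly invertible exactly when the matrix-valued ``denominator'' $\cB\cM(z)-\cA$ fails to be invertible. First I would fix $z\in\C$ that is not a pole of any $m_j$, so $z\in\rho(\mT_0)$ and $\cM(z)$ is a well-defined matrix; here $\cA=\cA_\tau$, $\cB=\cB_\tau$. The formula $(\mT_\tau-z)^{-1}=(\mT_0-z)^{-1}-\Gamma_z(\cB_\tau\cM(z)-\cA_\tau)^{-1}\cB_\tau\Gamma_{\ol z}^*$ shows that if $\cB_\tau\cM(z)-\cA_\tau$ is invertible then the right-hand side is a bounded operator equal to the resolvent, so $z\in\rho(\mT_\tau)$; conversely, since $\ran\Gamma_z=\ker(\mS^*-z)$ and $\Gamma_z$ is injective, a standard argument (or the abstract Krein-formula machinery) gives that $z\in\sigma_{\rm p}(\mT_\tau)$ forces $\cB_\tau\cM(z)-\cA_\tau$ to be singular. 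The explicit computation recorded just before \eqref{tw}, namely $(\cB_\tau\cM(z)-\cA_\tau)^{-1}\cB_\tau=\m_\tau(z)^{-1}\,\mathbf{1}\,\mathbf{1}^{\rm t}$ with $\mathbf 1=(1\ \cdots\ 1)^{\rm t}$, shows that $\cB_\tau\cM(z)-\cA_\tau$ is invertible iff $\m_\tau(z)\ne 0$, which yields the chain of equivalences \eqref{eigenvalue1}.

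Next I would identify the eigenfunctions. The kernel of $\mT_\tau-z$ is mapped by $\f\mapsto\f^\sharp(v)$ into $\cL_{\cA_\tau,\cB_\tau}$, and any $\f\in\ker(\mT_\tau-z)\subset\ker(\mS^*-z)=\ran\Gamma_z$ must be of the form $\f=\Gamma_z\bu=(u_j\,\y_j(\cdot,z))_1^n$ for some $\bu=(u_j)_1^n\in\C^n$, by \eqref{gammafield}. Evaluating at the central vertex gives $\f_1^\sharp(v)=(-u_j)_1^n=-\bu$ and $\f_2^\sharp(v)=(m_j(z)u_j)_1^n=\cM(z)\bu$ (using $Y_j(v,z)=I_2$), so the vertex condition $\cA_\tau\f_1^\sharp(v)+\cB_\tau\f_2^\sharp(v)=0$ reads exactly $(\cB_\tau\cM(z)-\cA_\tau)\bu=0$; conversely any such $\bu\ne0$ produces an eigenfunction. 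Finally, simplicity: from the structure of $\cA_\tau$ (its first $n-1$ rows force $u_1=u_2=\cdots=u_n$, the last row is $-u_1=\tau(u_1+\cdots+u_n)\cdot 0$-part corrected by $\cB_\tau$), the homogeneous system $(\cB_\tau\cM(z)-\cA_\tau)\bu=0$ has the one-dimensional solution space spanned by $\mathbf 1=(1\ \cdots\ 1)^{\rm t}$ precisely when $\m_\tau(z)=0$; hence $\dim\ker(\mT_\tau-z)=1$ and the eigenfunction is $\y(\cdot,z)=(\y_j(\cdot,z))_1^n$ up to scalar, with $u_j=1$ (or any common nonzero constant), as claimed. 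For $\tau=\pm\infty$ the same computation with $\cA_{\pm\infty},\cB_{\pm\infty}$ applies, the condition becoming $\sum_j m_j(z)^{-1}$-type; in fact $\m_\infty(z)=\sum_j m_j(z)$ so the statement is uniform.

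The main obstacle, I expect, is making the converse direction of the first equivalence fully rigorous without invoking more abstract extension theory than the excerpt has set up: one must argue that $z\in\sigma_{\rm p}(\mT_\tau)$ with $z\in\rho(\mT_0)$ genuinely implies singularity of $\cB_\tau\cM(z)-\cA_\tau$. The cleanest route is the one above — directly parametrize $\ker(\mT_\tau-z)$ via $\Gamma_z$ using $\ran\Gamma_z=\ker(\mS^*-z)$ and injectivity of $\Gamma_z$ — which sidesteps any need for surjectivity/Fredholm arguments on $\mT_\tau-z$. One small point to check carefully is that the hypothesis ``$z$ is not a pole of any $m_j$'' is exactly what guarantees $z\in\rho(\mT_0)$ (since the poles of $m_j$ are the eigenvalues $\mu_{j,k}$ of $T_j$) and that $\cM(z)$, hence $\cB_\tau\cM(z)-\cA_\tau$, is a genuine matrix rather than a relation; everything then reduces to the finite-dimensional linear-algebra identity for $(\cB_\tau\cM(z)-\cA_\tau)^{-1}\cB_\tau$ already displayed before \eqref{tw}.
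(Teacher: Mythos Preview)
Your proposal is correct and follows essentially the same approach as the paper. Both arguments parametrize $\ker(\mT_\tau-z)\subset\ker(\mS^*-z)$ via $\Gamma_z$, translate the vertex condition into $(\cB_\tau\cM(z)-\cA_\tau)\bu=0$, and then analyse this $n\times n$ system; the paper computes $\det(\cB_\tau\cM(z)-\cA_\tau)=(-1)^{n+1}\tau\,\m_\tau(z)$ explicitly, whereas you read off from the row structure that the first $n-1$ rows force $u_1=\cdots=u_n$ and the last row then gives $\tau\,\m_\tau(z)u_1=0$, which simultaneously yields the second equivalence and simplicity. Your initial detour through Krein's resolvent formula is unnecessary (and slightly delicate, since \eqref{kres} is stated for $z\in\rho(\mT_0)\cap\rho(\mT_\tau)$), but you correctly identify the direct parametrization as the clean route, which is exactly what the paper does.
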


\begin{proof}
   Let $\cA_\tau$, $\cB_\tau$ be the boundary matrices for $\mT_\tau$ defined in \eqref{Kirch}
   and assume $z\in\mathbb C$  is not a pole of any $m_j$, $j=1,2,\dots,n$; then $z$ is 
   not a pole of $\cM= {\rm diag}\,(m_1,m_2,\dots,m_n)$. 
   
   By Proposition \ref{sa}, $z \in \sigma_{\rm p}(\mT_\tau)$ with eigenfunction
   $\y(\cdot,z) = \big(\y_j(\cdot,z)\big)_1^n$ if and only if the following hold:
   
   \vspace{2mm} 
   
   \!\!\!\!(i) \,$\y(\cdot,z)\!\in\!\ker(S^*\!\!-\!z)$, \vspace{-1mm}i.e.\ 
   $  \y_j (\cdot,z)= \gamma_j \,Y_j(\cdot, z)\!
      \begin{pmatrix} -1 \\ m_j(z)\! 
      \end{pmatrix}$ with $\gamma_j\!\in\!\C$,
   \hspace*{12mm} \vspace{4mm} $j\!=\!1,2,...,n$;
   
   \!\!\!\!(ii) $\y(\cdot,z)$ satisfies the boundary condition $\cA_\tau \y_1^\sharp(v,z) + \cB_\tau \y_2^\sharp(v,z) = 0$.
    
   \vspace{3mm} 
    
   \!\!``$\Longrightarrow$'' in the first equivalence in \eqref{eigenvalue1}: 
   Since $\Y_j(v,z)=I_{2}$ and due to \eqref{Msharp}, condition (i) implies that
   $\y_2^\sharp(v,z) = - \cM(z) \y_1^\sharp(v,z)$;
   in particular, $\y_1^\sharp(v,z)\ne 0$ because $z$ is no pole of $\cM$ and $\y(\cdot,z)\ne 0$. Hence, by the condition in (ii),
   \begin{align}
      \label{eq:chareq}
      (\cB_\tau \cM(z)-\cA_\tau)\y_1^\sharp(v,z) = 0
   \end{align}
   has a non-zero solution. 
   
   \vspace{2mm}
   
   \!\!``$\Longleftarrow$'' in the first equivalence in \eqref{eigenvalue1}: 
   If \eqref{eq:chareq} has a non-zero solution $\y_1^\sharp(v,z)\!\in\! \C^n$ with components $\y_1^\sharp(v,z)_j$, $j\!=\!1,2,\dots,n$, then the function 
   $\y(\cdot,z)= (\y_j(\cdot,z))_1^n$ with components
   $\y_j(\cdot,z) = \y_1^\sharp(v,z)_j Y_j(\cdot, z)
   \!\begin{pmatrix}-1 \\ m_j(z) \end{pmatrix}$, $j=1,2,\dots,n$,
   satisfies~(i) and (ii) by construction.
   
   \vspace{1mm}
   
   The second equivalence in \eqref{eigenvalue1}, e.g.\ for $\tau \ne \infty$,
   is immediate from 
   \begin{align*}
   \cB_\tau \cM(z) - \cA_\tau 
   \!=\!
   \begin{pmatrix}
	 \hspace{8mm} -1 & 1 &  &&& 0\\
	 & -1 & 1 &  && \\
	 && -1 & \ \ 1 & & \\
	 &&& \ \ \ddots & \ddots &\\
	 \hspace{8mm} 0 &&&& \hspace{-4mm}-1 & 1\\
	 \!1+\tau m_1(z)\!\!& \tau m_2(z) & \cdots\cdots \!\!\!&\!\! \cdots & \!\!\!\tau m_{n-1}(z) & \!\!\tau m_n(z)
   \end{pmatrix}
   \end{align*}
and 
\[
  \det( \cB_\tau \cM(z) - \cA_\tau ) = (-1)^{n+1} \Big( 1 + \tau\sum_{j=1}^n m_j(z)\Big) =  (-1)^{n+1} \tau \m_\tau(z).
\]
Since the dimension of the kernel of the matrix $\cB_\tau \cM(z) - \cA_\tau$ is at most $1$, every zero of $\m_\tau$ is a simple eigenvalue of $\mT_\tau$.
\end{proof}

Apart from the simple eigenvalues characterized in Lemma \ref{lem:TKeigenvaluessimple},
there may be other eigenvalues of possibly higher multiplicity which arise as poles of 
at least two of the Weyl--Titchmarsh functions $m_j$ on the edges $e_j$.

\begin{thm}
\label{thmTK}
For the Robin matching conditions \eqref{k1}, \eqref{k2} with $\tau\ne 0$, the spectrum $\sigma_{\rm p}(\mT_\tau)$ of $\,\mT_\tau$ 
consists of two types of eigenvalues:
\begin{itemize}
\item[{\rm i)}] The solutions of the \vspace{-2mm}equation 
\begin{equation}
\label{Weyl}
   \sum_{j=1}^n m_j(z)= \ct{-} \frac 1\tau
\end{equation}
are simple eigenvalues and each of them is strictly increasing in $\tau$ on the intervals $[-\infty,0)$ and \vspace{1mm}$(0,\infty]$;
\item[{\rm ii)}] 
The common poles of $\nu+1 \in \{2,3,\dots,n\}$ functions $m_j$ are eigenvalues of multiplicity $\nu \in \{1,2,\dots,n-1\}$ and are independent of $\,\tau$. 
\end{itemize}
Further, for any $\tau_1$, $\tau_2 \in \R \cup \{\infty\}$, the eigenvalues of $\,\mT_{\tau_1}$ and of $\,\mT_{\tau_2}$~interlace.
\end{thm}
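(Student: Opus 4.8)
The plan is to treat the three assertions in turn; once Lemma~\ref{lem:TKeigenvaluessimple} is available, parts~(i) and~(ii) are short and the real content lies in the interlacing. For~(i), Lemma~\ref{lem:TKeigenvaluessimple} already tells us that a point $z$ which is not a pole of any $m_j$ belongs to $\sigma_{\rm p}(\mT_\tau)$ precisely when $\m_\tau(z)=0$, i.e.\ when $M(z):=\sum_{j=1}^n m_j(z)=-1/\tau$, and that such $z$ is then a simple eigenvalue. To obtain the monotonicity, I would note that by Lemma~\ref{nevanlinna} each $m_j$ is a non-constant Nevanlinna function, hence so is $M$, and its integral representation forces $M'(\lambda)>0$ at every real $\lambda$ at which $M$ is holomorphic. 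For a solution $\lambda(\tau)$ of $M(\lambda(\tau))=-1/\tau$ the implicit function theorem then gives $\lambda'(\tau)=1/(\tau^2M'(\lambda(\tau)))>0$ separately on $(0,\infty)$ and on $(-\infty,0)$; since both $\tau=+\infty$ and $\tau=-\infty$ yield the limiting equation $M(\lambda)=0$, the strict increase extends to $[-\infty,0)$ and to $(0,\infty]$.

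For~(ii), let $z_0$ be a common pole of precisely the functions $m_j$, $j\in I$, with $|I|=\nu+1\ge 2$. A pole of $m_j$ is a zero of the denominator $c_j$ from \eqref{c}; since $Y_j(v_j,z_0)$ is invertible the numerator $b_j(z_0)\ne 0$, so up to a scalar the only solution of $(H_j-z_0)\f_j=0$ on $e_j$ obeying the outer condition \eqref{bc_j} is $\phi_j:=Y_j(\cdot,z_0)\binom{0}{1}$, with $\phi_j(v)=\binom{0}{1}$. For $j\notin I$ the corresponding solution is $\psi_j:=Y_j(\cdot,z_0)\binom{-1}{m_j(z_0)}$, with $\psi_j(v)=\binom{-1}{m_j(z_0)}$. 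Hence every eigenfunction of $\mT_\tau$ at $z_0$ has the form $\f=(\gamma_j\phi_j)_{j\in I}\oplus(\delta_j\psi_j)_{j\notin I}$; condition \eqref{k1} makes the common value $f_1(v)=\cdots=f_n(v)$ equal to $0$ (it is $0$ on the edges indexed by $I$), which forces $\delta_j=0$ for $j\notin I$, and then \eqref{k2} reduces to $\sum_{j\in I}\gamma_j=0$ for every $\tau\ne 0$, including $\tau=\pm\infty$. So the eigenspace at $z_0$ has dimension $\nu$ and is $\tau$-independent; the same computation with $|I|=1$ yields only $\f=0$, so a pole of a single $m_j$ is not an eigenvalue, and together with Lemma~\ref{lem:TKeigenvaluessimple} this shows that $\sigma_{\rm p}(\mT_\tau)$ is exactly the union of the two types.

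For~(iii), I would read the interlacing off from the picture built in~(i)--(ii). Away from the common poles of the $m_j$, each $\mT_\tau$ has exactly one (simple) eigenvalue in every interval $(p_k,p_{k+1})$ between consecutive poles $p_k$ of $M$, namely the solution of $M=-1/\tau$; since the $p_k$ have negative residues and accumulate at $\pm\infty$, $M$ is a strictly increasing bijection of each such interval onto $\R$, so these eigenvalues are strictly ordered in $-1/\tau$ and therefore interlace for different values of $\tau$. The remaining eigenvalues -- the common poles of at least two of the $m_j$ -- sit at the interval endpoints and, by~(ii), occur with multiplicities independent of $\tau$, so inserting them into both spectra at the same positions preserves the interlacing pattern. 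Alternatively, and more quickly: from \eqref{kres-standard}, with $(\mT_0-z)^{-1}$ read for $\tau=0$, the difference $(\mT_{\tau_1}-z)^{-1}-(\mT_{\tau_2}-z)^{-1}$ equals $\bigl(\m_{\tau_2}(z)^{-1}-\m_{\tau_1}(z)^{-1}\bigr)\,\bigl(\y(\cdot,\ol z),\cdot\bigr)_{\L^2(\cG)}\,\y(\cdot,z)$, a rank-one operator; as each $\mT_\tau$ has compact resolvent -- it differs from $\mT_0=\bigoplus_jT_j$ by a finite-rank resolvent perturbation and every $T_j$ has Hilbert--Schmidt resolvent -- the eigenvalues of $\mT_{\tau_1}$ and $\mT_{\tau_2}$ interlace by the standard rank-one perturbation argument.

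The main obstacle is part~(iii): one must check that the eigenvalues common to all $\mT_\tau$ (the type~(ii) ones), present with identical multiplicities in both operators, do not spoil the interlacing, and that the count ``one simple eigenvalue per gap'' is exact; both points rest on the strict monotonicity of the scalar Nevanlinna function $M$ and on the already established fact that the singularities of the $m_j$ are simple poles with negative residue accumulating only at $\pm\infty$. In the resolvent-theoretic variant the same content is repackaged as the statement that a rank-one resolvent perturbation shifts the eigenvalue counting function by at most one, which is where some care is needed, since $\mT_\tau$ is not semibounded.
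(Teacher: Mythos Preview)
Your proposal is correct. The main difference from the paper lies in part~(ii): the paper computes the spectral projection at a common pole $\lambda_0$ via the residue $\lim_{z\to\lambda_0}(\lambda_0-z)(\mT_\tau-z)^{-1}$ of Krein's resolvent formula \eqref{kres-standard}, reading off the dimension~$\nu$ of the eigenspace from the explicit form of this limit together with the rank-one lower bound $\dim\ker(\mT_\tau-\lambda_0)\ge\dim\ker(\mT_0-\lambda_0)-1=\nu$. Your direct eigenspace computation---identifying on each edge the unique solution satisfying the outer boundary condition and then imposing \eqref{k1}, \eqref{k2} at~$v$---is more elementary and transparent, and it disposes of the case $|I|=1$ (a pole of a single~$m_j$ is not an eigenvalue of~$\mT_\tau$) in the same stroke, whereas the paper obtains this from the vanishing of the residue. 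For~(i) and for the interlacing in~(iii) the two approaches essentially coincide: like your first argument, the paper reads the interlacing directly from the fact that $M=\sum_j m_j$ is a Nevanlinna function, so that the solutions of $M=-1/\tau$ lie one per gap between consecutive poles of~$M$, and treats $\tau=0$ separately by identifying the poles of~$M$ with the eigenvalues of~$\mT_0$. Your alternative rank-one resolvent argument is correct as well, but---as you yourself flag---needs some care since the operators are not semibounded; the Nevanlinna route sidesteps this entirely.
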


\begin{proof}
First assume that $\la_0\in\R$ is not a pole of any $m_j$, $j=1,2,\dots,n$. Then, by Lemma \ref{lem:TKeigenvaluessimple},
$\la_0 \in \sigma_{\rm p}(\mT)$ if and only if $\la_0$ is a solution of \eqref{Weyl}, and in this case $\la_0$ is simple.

Now suppose that $\la_0\!\in\!\R$ is a pole of $\nu\!+\!1$ functions $m_j$ with $\nu \!\in\! \{0,1,\dots,n\!-\!1\}$, without loss of generality, of 
$m_1$, $m_2$, $\dots$, $m_{\nu+1}$. 
The residue $\gamma_j$ of $m_j$ at $\la_0$ satisfies $\gamma_j<0$, $j\!=\!1,2,\dots,\nu+1$, 
and we set $\gamma_j\!:=\!0$, $j\!=\!\nu+2,\nu+3,\dots,n$.

The orthogonal projection onto the possible eigenspace of $\mT_\tau$ at $\la_0$ is given~by $\lim\limits_{z\to\la_0} (\la_0-z)(\mT_\tau-z)^{-1}$.
Using \eqref{kres-standard} and \eqref{sol-dir-graph}, we find that, 
for $\g = (\g_j)_1^n \in \L^2(\cG)$ and $\x\!=\!(x_j)_1^n$, $x_j\!\in\!e_j\!=\![v,v_j]$,
\begin{align}
&\lim\limits_{z\to\la_0}\!\!\left((\la_0-z)(\mT_\tau-z)^{-1}\g\right)(\x) \nonumber
\\[1mm]	
\label{res1}
&=\left(\!\!\ Y_j(x_j,\la_0)\!\begin{pmatrix}0\!&\!0\\0\!&\!\gamma_j\end{pmatrix}\!\displaystyle\int_v^{v_j}\!\!\!Y_j(\xi_j;\la_0)^*\g_j(\xi_j)\,\d\xi_j\right)_{\!1}^{\!n} 
\hspace{-6mm}\\
\nonumber
&\hspace{2mm} - \lim\limits_{z\to\la_0}\dfrac {\tau(\la_0-z)^2}{\Big(1\!+\!\tau\!\displaystyle\sum_{j=1}^nm_j(z)\Big)(\la_0\!-\!z)}\!
\begin{pmatrix}\!\!\bigg(\displaystyle\!\sum_{\ell=1}^n\int_v^{v_\ell}\!\!\!\!
\y_\ell(\xi_j,\ol z)^{\rm t}\g_\ell(\xi_j)\,\d\xi_j\!\bigg)\y_j(x_j,z)\!\end{pmatrix}_{\!\!1}^{\!\!n}\!\!\!. \\[-8mm] \nonumber
\end{align}
Since $\y_{\!j}(x_j,z)\!=\!Y_j(x_j,z)\!\begin{pmatrix}-1\\\!m_j(z)\!\end{pmatrix}$, the limit
$
\lim\limits_{z\to\la_0}\!(\la_0-z)\y_{\!j}(x_j,z)$ equals $Y_j(x_j,\la_0)\!\begin{pmatrix}0\\ \gamma_j\end{pmatrix}\! $ for 
$j=1,2,\dots,\nu\!+\!1$
and $\begin{pmatrix}0\\ 0\end{pmatrix}$ for $j=\nu+2,\nu+3,\dots,n$. So the limit on the right hand side of \eqref{res1}~equals
\begin{align*}
-\dfrac 1{\displaystyle\sum_{j=1}^{\nu+1} \gamma_j}\begin{pmatrix}\!\!\bigg(\displaystyle\sum_{\ell=1}^{\nu+1}\int_v^{v_\ell} 
    \!\big(0\ \gamma_\ell\big) \,Y_\ell(\xi_j,\la_0)^*\,\g_\ell(\xi_j)\,\d\xi_j\!\bigg)Y_j(x_j,\la_0)                                                               
    \begin{pmatrix}0\\\gamma_j\end{pmatrix}\end{pmatrix}_{\!\!1}^{\!\!n}\\[-6mm]
    =:-\dfrac{1}{\displaystyle\sum_{j=1}^{\nu+1} \gamma_{j}} 
   \begin{pmatrix} \gamma_j \bigg(\displaystyle\!\sum_{\ell=1}^{\nu+1} \gamma_\ell\mu_\ell\bigg)\begin{pmatrix}y_{j,12}(x_j,\la_0)\\y_{j,22}(x_j,\la_0)\end{pmatrix}\end{pmatrix}_{\!\!1}^{\!\!n}
\\[-7mm]
\end{align*}
with
\[
\mu_\ell:=
\int_v^{v_\ell}\!\!\! \!\big(y_{\ell,12}(\xi_j,\la_0)\ y_{\ell,22}(\xi_j,\la_0) \big)\begin{pmatrix}g_\ell(\xi_j)\\ \wh g_\ell(\xi_j)\end{pmatrix}\!\d\xi_j,
\quad \ell=1,2,\dots,{\nu+1}.
\]
The components of the first term on the right hand side of \eqref{res1} can  be written as
\[
\gamma_j\!\!\displaystyle\int_v^{v_j}\!\!\!\!\!\!\!\big(y_{j,12}(\xi_j,\!\la_0)\  y_{j,22}(\xi_j,\!\la_0)\big)\!
\begin{pmatrix}g_j(\xi_j)\\ \wh g_j(\xi_j)\end{pmatrix}\!\d\xi\! \begin{pmatrix}y_{j,12}(x_j,\!\la_0)\\y_{j,22}(x_j,\!\la_0)\end{pmatrix}
\!=\!
\gamma_j\mu_j\!\!\begin{pmatrix}y_{j,12}(x_j,\!\la_0)\\y_{j,22}(x_j,\!\la_0)\end{pmatrix}\!.
\]
Altogether, \eqref{res1} becomes
\[
\lim_{z\to\la_0}(\la_0\!-\!z)\!\left((\mT_\tau\!-\!z)^{-1}\g\right)(\x)
=\Bigg(\bigg[\gamma_j\mu_j-\dfrac{\gamma_j}{\displaystyle\sum_{\ell=1}^{\nu+1} \gamma_\ell}\sum_{\ell=1}^{\nu+1} \gamma_\ell\mu_\ell\bigg]\!\!
\begin{pmatrix}y_{j,12}(x_j,\la_0)\\y_{j,22}(x_j,\la_0)\end{pmatrix}\!\!\Bigg)_{\!\!1}^{\!\!n}. 
\]
This form shows that the dimension of the eigenspace of $\mT_\tau$ at $\la_0$ is $\le\nu$ since $\gamma_j=0$ for $j=\nu+2,\nu+3,\dots,n,$ and
\[
\sum_{j=1}^{\nu+1}\Bigg(\gamma_j\mu_j-\dfrac{\gamma_j}{\displaystyle\sum_{\ell=1}^{\nu+1} \gamma_{\ell}}\sum_{\ell=1}^{\nu+1} \gamma_\ell\mu_\ell\Bigg)=0;\vspace{-2mm}
\]
\noindent
in particular, if $\nu\!=\!0$, i.e.\ $\la_0$ is a pole of only one $m_j$, then $\la_0$ is no eigenvalue of~$\mT_\tau$.

That the dimension of $\ker (\mT_\tau-\la_0)$ cannot be $<\nu$ follows from the fact that the rank of $(\mT_\tau-z)^{-1}-(\mT_0-z)^{-1}$ is equal to $1$ by 
\eqref{kres-standard} and the multiplicity of $\la_0$ as an eigenvalue of $\mT_0$ equals $\nu+1$ by assumption.

For $\tau_1$, $\tau_2 \ne 0$, the claimed interlacing property is immediate from the fact that the left hand side of
\eqref{Weyl} is a Nevanlinna function. If e.g.\ $\tau_2=0$, 
then the eigenvalues of $\mT_{\tau_2} = \mT_0 =T_1\oplus T_2\oplus\cdots\oplus T_n$ 
are the poles of the left hand side of \eqref{Weyl} and a common pole of $k$ 
functions $m_j$ is an eigenvalue of $\mT_0$ of multiplicity $k$, and the claim follows from i) and ii).
\end{proof}

\begin{cor}
\label{cor:interlace}
For $\tau \!\ne\! 0$ two consecutive eigenvalues of $\,\mT_\tau$ cannot be multiple.
\end{cor}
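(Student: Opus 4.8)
The plan is to combine the dichotomy of Theorem~\ref{thmTK} with the sign information on the residues of the Weyl--Titchmarsh functions. An eigenvalue of $\mT_\tau$ of multiplicity $\ge 2$ must be of type~ii) in Theorem~\ref{thmTK}, hence a common pole of at least two of the functions $m_j$. Since each $m_j$ is a meromorphic Nevanlinna function, its residue at any of its poles is strictly negative (see Section~\ref{sec:CanonicalSystems}), and all these residues have the same sign, so no cancellation can occur in the sum defining $\m_\tau$ in \eqref{tw}; thus such a point is a pole of $\m_\tau$ with strictly negative residue. This reasoning is uniform in $\tau\in\R\cup\{\pm\infty\}$: for $\tau=\pm\infty$ one has $\m_\tau=\sum_{j=1}^n m_j$, which is still a meromorphic Nevanlinna function, so the hypothesis $\tau\ne 0$ is covered in one stroke.

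I would then argue by contradiction. Suppose $\mT_\tau$ had two consecutive eigenvalues $\lambda_0<\lambda_1$ that are both multiple; by the previous step both are poles of $\m_\tau$ with negative residue. As $\m_\tau$ is meromorphic, its poles are isolated, so there is a smallest pole $\mu$ of $\m_\tau$ with $\lambda_0<\mu\le\lambda_1$. On the open interval $(\lambda_0,\mu)$ the function $\m_\tau$ is real-analytic, and because the residues at the endpoints $\lambda_0$ and $\mu$ are strictly negative, $\m_\tau(x)\to-\infty$ as $x\downarrow\lambda_0$ and $\m_\tau(x)\to+\infty$ as $x\uparrow\mu$. By the intermediate value theorem $\m_\tau$ has a zero $\lambda^*\in(\lambda_0,\mu)\subseteq(\lambda_0,\lambda_1)$.

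To conclude, I would observe that $\lambda^*$, not being a pole of $\m_\tau$, is not a pole of any $m_j$ (again because the residues in \eqref{tw} do not cancel), so Lemma~\ref{lem:TKeigenvaluessimple} (equivalently Theorem~\ref{thmTK}\,i)) identifies $\lambda^*$ as a simple eigenvalue of $\mT_\tau$ lying strictly between $\lambda_0$ and $\lambda_1$ --- contradicting that $\lambda_0$ and $\lambda_1$ were consecutive eigenvalues of $\mT_\tau$. I do not expect a genuine obstacle here; the only point that needs a little care is the assertion that $\m_\tau$ really has a pole, with the stated blow-up on either side, at every multiple eigenvalue, and this rests precisely on the uniform sign of the residues guaranteed by the Nevanlinna structure recalled in Section~\ref{sec:CanonicalSystems}.
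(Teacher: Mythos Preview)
Your argument is correct and is precisely the way the paper intends the corollary to be read off from Theorem~\ref{thmTK}: the paper gives no separate proof, treating it as immediate from the Nevanlinna structure of $\m_\tau$ and the dichotomy i)/ii). Your write-up simply makes explicit the standard fact that a meromorphic Nevanlinna function has a zero strictly between any two of its poles, and checks (via the uniformly negative residues) that poles of individual $m_j$ are never masked in the sum; this is exactly the content behind the interlacing statement in Theorem~\ref{thmTK}.

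One cosmetic point: a multiple eigenvalue (multiplicity $\ge 2$) is in fact a common pole of at least \emph{three} of the $m_j$ by ii), not two, but your weaker statement is of course still true and is all that is needed.
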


\begin{rem}
For the special case $\tau=\infty$ of standard matching conditions, 
Theorem~\ref{thmTK} was proved in \cite[Thm.~3.19~(I/II)]{MR3180877} in the more general setting of symmetric relations with not necessarily discrete spectrum.
\ct{An~analogous result for star graphs of Stieltjes strings was proved in \vspace{-1mm} \cite[Cor.~2.6]{MR3091304}.}
\end{rem}

\begin{thm}
\label{thm:trace}
With $\m_\tau(z)=\dfrac 1\tau+\displaystyle\sum_{j=1}^n m_j(z)$ as in \eqref{tw}, the trace \vspace{-2mm} formula
\[
  \tr \left((\mT_\tau-z)^{-1}-(\mT_0-z)^{-1}\right) = - \dfrac{\dot \m_\tau(z)}{\m_\tau(z)}, 
  \quad z\in\C\setminus\R,
\]
holds, i.e.\ $\m_\tau$ is the perturbation determinant of $\,\mT_\tau$ with respect to $\mT_0$.
\end{thm}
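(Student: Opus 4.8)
The plan is to derive the trace formula from Krein's resolvent formula \eqref{kres-standard} together with the trace formula for the single-edge operators $T_j$ in Proposition~\ref{traceformula1}. First I would split the resolvent difference as
\[
 (\mT_\tau - z)^{-1} - (\mT_0 - z)^{-1}
 = \Big( (\mT_\tau - z)^{-1} - (\mT_0 - z)^{-1} \Big),
\]
and observe from \eqref{kres-standard} that the right-hand side is the rank-one operator $-\m_\tau(z)^{-1} \big( \y(\cdot,\ol z), \,\cdot\, \big)_{\L^2(\cG)}\, \y(\cdot,z)$. A rank-one operator of the form $(\cdot\,, g)\,h$ has trace $(h,g)$, so
\[
 \tr\left( (\mT_\tau - z)^{-1} - (\mT_0 - z)^{-1} \right) = - \frac{1}{\m_\tau(z)} \big( \y(\cdot,z), \y(\cdot,\ol z) \big)_{\L^2(\cG)}
 = - \frac{1}{\m_\tau(z)} \sum_{j=1}^n \int_v^{v_j} \y_j(\xi_j,z)^{\rm t} \y_j(\xi_j,z)\,\d\xi_j .
\]
Here I use that $\y_j(\cdot,\ol z) = \ol{\y_j(\cdot,z)}$ by \eqref{gammafield} and $Y_j(\cdot,\ol z)=\ol{Y_j(\cdot,z)}$, so the pairing with the conjugate produces $\int \y_j^{\rm t}\y_j$ rather than $\int \y_j^* \y_j$.

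Next I would evaluate the integral $\int_v^{v_j} \y_j(\xi_j,z)^{\rm t}\y_j(\xi_j,z)\,\d\xi_j$. Writing $\y_j(\cdot,z) = Y_j(\cdot,z)\binom{-1}{m_j(z)}$, this is
\[
 \big(\!-\!1 \ \ m_j(z)\big) \left( \int_v^{v_j} Y_j(\xi_j,z)^{\rm t} Y_j(\xi_j,z)\,\d\xi_j \right) \binom{-1}{m_j(z)}.
\]
Since $Y_j(\xi_j,\ol z)^* = Y_j(\xi_j,z)^{\rm t}$, Lemma~\ref{intyy} (with $\ol z$ replaced by $z$, which is legitimate since the identity there holds for all $z\in\C$ with $Y_j(x,\ol z)^*$ and $\dot Y_j(x,z)$ — I would apply it at $z$ by first taking the stated identity with the roles set so that the factor in front is $Y_j(v_j,z)^{\rm t}$) gives $\int_v^{v_j} Y_j(\xi_j,z)^{\rm t}Y_j(\xi_j,z)\,\d\xi_j = -Y_j(v_j,z)^{\rm t} J \dot Y_j(v_j,z)$. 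Then, exactly as in the proof of Proposition~\ref{traceformula1}, I would use \eqref{m1} — which reads $Y_j(v_j,z)\binom{-1}{m_j(z)} = \binom{-\sin\al_j}{\cos\al_j}\frac{1}{c_j(z)}$ — to rewrite $\big(\!-\!1\ \ m_j(z)\big) Y_j(v_j,z)^{\rm t} = \frac{1}{c_j(z)}\big(\!-\!\sin\al_j \ \ \cos\al_j\big)$, obtaining
\[
 \int_v^{v_j} \y_j(\xi_j,z)^{\rm t}\y_j(\xi_j,z)\,\d\xi_j
 = - \frac{1}{c_j(z)} \big(\!-\!\sin\al_j \ \ \cos\al_j\big) J \dot Y_j(v_j,z) \binom{-1}{m_j(z)}.
\]
Differentiating \eqref{m1} with respect to $z$, or directly differentiating the defining relation $c_j(z) m_j(z) = b_j(z)$ together with $Y_j(v_j,z)\binom{-1}{m_j(z)} = \binom{-\sin\al_j}{\cos\al_j}c_j(z)^{-1}$, one identifies the right-hand side with $\dot m_j(z)$; this is the computation already carried out (up to the same manipulations) inside the proof of Proposition~\ref{traceformula1}, where it yields $\tr\int_v^{v_j} G_j(x,x+0;z)\,\d x = -\dot c_j(z)/c_j(z)$. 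Concretely, the cleanest route is to note that $\int_v^{v_j}\y_j^{\rm t}\y_j\,\d\xi_j = \Gamma_{j,z}^*\Gamma_{j,\ol z}$-type expression equals, by Lemma~\ref{nevanlinna} applied with $\zeta=\ol z$ (so $N_{m_j}(z,\ol z) = \frac{m_j(z)-m_j(z)}{z-z}$ is a removable-singularity limit), precisely $\dot m_j(z)$; indeed $\lim_{\zeta\to\ol z} N_{m_j}(z,\zeta) = \lim_{\zeta\to\ol z}\frac{m_j(z)-\ol{m_j(\zeta)}}{z-\ol\zeta} = \dot m_j(z)$ since $\ol{m_j(\zeta)} = m_j(\ol\zeta)$. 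Hence
\[
 \int_v^{v_j} \y_j(\xi_j,z)^{\rm t}\y_j(\xi_j,z)\,\d\xi_j = \dot m_j(z).
\]

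Putting the pieces together gives
\[
 \tr\left( (\mT_\tau - z)^{-1} - (\mT_0 - z)^{-1} \right) = - \frac{1}{\m_\tau(z)} \sum_{j=1}^n \dot m_j(z) = - \frac{\dot\m_\tau(z)}{\m_\tau(z)},
\]
since $\m_\tau(z) = \frac1\tau + \sum_j m_j(z)$ so $\dot\m_\tau(z) = \sum_j \dot m_j(z)$. This is the claimed identity, and the statement that $\m_\tau$ is the perturbation determinant follows since, by definition, $\det{}_{\mT_\tau/\mT_0}(z)$ satisfies $\frac{\d}{\d z}\log\det{}_{\mT_\tau/\mT_0}(z) = -\tr\big((\mT_\tau-z)^{-1}-(\mT_0-z)^{-1}\big) = \dot\m_\tau(z)/\m_\tau(z) = \frac{\d}{\d z}\log\m_\tau(z)$, so the two agree up to a multiplicative constant which is normalized to $1$ by the behavior as $\Im z\to\infty$.

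The main obstacle is bookkeeping with complex conjugation: one must be careful that the bilinear pairing against $\y(\cdot,\ol z)$ (rather than $\y(\cdot,z)$) in \eqref{kres-standard} is exactly what turns the rank-one trace into $\int \y_j^{\rm t}\y_j$ — a holomorphic quantity in $z$ — and then that this equals the limiting value $\lim_{\zeta\to\ol z} N_{m_j}(z,\zeta)$ of the Nevanlinna kernel from Lemma~\ref{nevanlinna}, which is $\dot m_j(z)$. Once this identification $\int_v^{v_j}\y_j(\xi_j,z)^{\rm t}\y_j(\xi_j,z)\,\d\xi_j = \dot m_j(z)$ is secured — either via Lemma~\ref{intyy} and \eqref{m1} as in the proof of Proposition~\ref{traceformula1}, or via the kernel limit — the rest is immediate.
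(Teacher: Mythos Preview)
Your proposal is correct and follows essentially the same route as the paper: identify the resolvent difference as the rank-one operator coming from \eqref{kres-standard}, take its trace to obtain $-\m_\tau(z)^{-1}\sum_j\int_v^{v_j}\y_j(\xi,\ol z)^{\rm t}\y_j(\xi,z)\,\d\xi$, and then show each integral equals $\dot m_j(z)$. The only difference is cosmetic: the paper carries out the last identification explicitly via Lemma~\ref{intyy}, \eqref{m1}, and the quotient rule for $m_j=b_j/c_j$, whereas you also (and more cleanly) obtain it as the limit $\zeta\to\ol z$ of the Nevanlinna kernel identity \eqref{nationaldag}; both arguments are valid and yield the same conclusion.
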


\begin{proof}
Let $z\in\C\setminus \R$. 
By \eqref{kres-standard} the resolvent difference $(\mT_\tau\!-\!z)^{-1}-(\mT_0\!-\!z)^{-1}$ is an integral operator in $\L^2(\cG)$ of rank 1 with semi-separable kernel 
\[
  K_\tau(\x,\bxi;z) :=  - \frac 1{\m_\tau(z)} \y(\x,z) \y(\bxi,\ol z)^{\rm t}, \quad \x,\bxi \in \cG,
\]
hence its trace is given by \vspace{-1mm} (comp.\ e.g.\ \cite[Thm.\ IX.3.2]{MR1130394})
\begin{equation}
\label{mjdot}
  \tr \left((\mT_\tau\!-\!z)^{-1}-(\mT_0\!-\!z)^{-1}\right)= 
  - \frac 1{\m_\tau(z)} \,
  \sum_{j=1}^n\int_v^{v_j}\!\!\y_j(\xi_j,\ol z)^{\rm t}\,\y_j(\xi_j,z)\,\d\xi_j.
\end{equation}
It remains to be shown that the integral in \eqref{mjdot} equals $\dot m_j(z)$. To this end, 
we write $m_j(z)=\dfrac{b_j(z)}{c_j(z)}$  as in~\eqref{c}. 
Then, using Lemma \ref{intyy}, \eqref{m1}, and \eqref{m}, we find 
\begin{align*}
\int_v^{v_j}\!\!\y_j(\xi_j,\ol z)^{\rm t}\,&\y_j(\xi_j,z)\,\d\xi_j
 =\big( \!\!-\!\!1\,  m_j(z)\big)\int_v^{v_j}!\!Y_j(\xi_j,\ol z)^*\,Y_j(\xi_j,z)\,\d\xi_j\,\begin{pmatrix}-1\\m_j(z)\end{pmatrix}\\[-1mm]
&=-\big( -1\ m_j(z)\big)\,Y_j(v_j,\ol z)^*\,J \dot Y_j(v_j,z)\,\begin{pmatrix}-1\\m_j(z)\end{pmatrix}\\
&=-\dfrac 1{c_j(z)}\big( -\sin\al_j\ \cos\al_j\big)J\dot Y_j(v_j,z)\,\begin{pmatrix}-1\\m_j(z)\end{pmatrix}\\
&=-\dfrac 1{c_j(z)}\big(\cos\al_j\ \sin\al_j\big)\begin{pmatrix}\dot y_{j,11}(v_j,z)&\dot y_{j,12}(v_j,z)\\
   \dot y_{j,21}(v_j,z)&\dot y_{j,22}(v_j,z)\end{pmatrix}\begin{pmatrix}-1\\ \frac{b_j(z)}{c_j(z)}\end{pmatrix}\\
&=-\dfrac 1{c_j(z)}\big(\,\dot b_j(z)\  \dot c_j(z)\big)\begin{pmatrix}-1\\\frac{b_j(z)}{c_j(z)}\end{pmatrix}\\
&= - \dfrac{-\dot b_j(z)c_j(z)+b_j(z)\dot c_j(z)}{c_j(z)^2}\\
&= \dot m_j(z).
\qedhere
\end{align*}
\end{proof}

\vspace{2mm}

\section{Dislocation of eigenvalues for Robin matching conditions}
\label{sec:disloc}

In this section we study the difference $d_R(\mT_\tau)$ of the number of eigenvalues in intervals $[0,R)$ and $[-R,0)$ for the Dirac-Krein operator $\mT_\tau$. 
We prove that there exists a number $\kappa_0\in\Z$ such that, for $R>0$ sufficiently large,  $d_R(\mT_\tau)$ differs from $\kappa_0$ at most by $n+2$; 
in fact, we even show a stronger result in terms of the boundary parameters $\al_j$ at the outer vertices.

\medskip

\subsection{}
As a first tool, we invoke results on the asymptotics of the eigenvalues of Dirac-Krein operators on a compact interval with summable potentials (see e.g.\ \cite{MR2201307}, \cite{LM14}).
In view of the boundary conditions \eqref{bc} imposed for the operators $T_j$ on an edge $e_j=[v,v_j]$ for fixed $j\in\{1,2,\dots,n\}$, we need to apply the more general results in \cite{LM14} \ct{and the detailed version \cite{MR3340175} (see also \cite{MR3248475})}. To this end, we first have to transform the differential equation \eqref{eq} into the form considered in \cite{LM14} by means of the unitary matrix $W$ diagonalizing~$J$,
\begin{equation}
 \label{W}
   W:=\frac{1}{\sqrt{2}}\begin{pmatrix}1&1\\-\i&\i\end{pmatrix}, \quad W^*W=I_2, \quad W^*JW=\begin{pmatrix}\i&0\\0&-\i\end{pmatrix}.
\end{equation}

\begin{prop}
\label{trafo}
A function $\f_j\!=\!\big(f_j\, \wh f_j\big)^{\rm t} \in \L^2(e_j) \oplus \L^2(e_j)$ is a solution of the 
homogeneous Dirac-Krein system \eqref{eq} for $z\in \C$ with boundary conditions~\eqref{bc} if and only if 
the function $\h_j:=  W^* \f_j \!=\! \big(h_j\, \wh h_j\big)^{\rm t} \in \L^2(e_j) \oplus \L^2(e_j)$ is a solution of
\begin{align}
\label{trafo-eq}
  & \begin{pmatrix}\i\!\!& \,0\\0\!\!&\!-\i\end{pmatrix} \h_j'(x) \!-\!  
  \begin{pmatrix}0 \!&\! r_j(x)  \\ \overline{r_j(x)} \!&\! 0\end{pmatrix} 
  \h_j(x) - \la \h_j(x) 
  \!=\! 0, 
  \quad x\!\in\! e_j, \\
\label{trafo-bc}
  & \ h_j(v)+\wh h_j(v)=0, \quad \e^{-\i\al_j} h_j(v_j) + \e^{\i\al_j} \wh h_j(v_j) = 0,
\end{align}
with $\la=-z$ where $r_j:=p_j+\i q_j \in L^1(e_j,\C)$.
\end{prop}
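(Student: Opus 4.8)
The plan is to perform the change of unknown $\f_j = W\h_j$ (equivalently $\h_j = W^*\f_j$) directly and to track what it does to the differential equation and to the two boundary conditions. Since $W$ is a constant unitary matrix, $\f_j\in\L^2(e_j)\oplus\L^2(e_j)$ is absolutely continuous if and only if $\h_j$ is, with $\h_j' = W^*\f_j'$, and $r_j:=p_j+\i q_j\in L^1(e_j,\C)$ because $p_j,q_j\in L^1(e_j,\R)$; so there is nothing to verify at the level of function spaces, and the statement reduces to two matrix identities and two short boundary computations. All steps will be reversible because $W$ is invertible, which will yield the ``if and only if''.

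First I would substitute $\f_j = W\h_j$ into the homogeneous system \eqref{eq} (with $\g_j=0$) and multiply from the left by $W^*$, which gives $-W^*JW\,\h_j'(x)+W^*V_j(x)W\,\h_j(x)-z\h_j(x)=0$. By \eqref{W} we have $W^*JW=\diag(\i,-\i)$; multiplying the whole equation by $-1$ and writing $\la=-z$ then turns the derivative term into $\diag(\i,-\i)\h_j'$ and the spectral term into $-\la\h_j$. What remains is the elementary $2\times2$ computation $W^*V_jW = \begin{pmatrix}0 & r_j\\ \ol{r_j} & 0\end{pmatrix}$, using $V_j=\begin{pmatrix}p_j & q_j\\ q_j & -p_j\end{pmatrix}$ and $r_j=p_j+\i q_j$; this produces exactly the off-diagonal potential in \eqref{trafo-eq}, up to the sign absorbed by the overall factor $-1$.

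Next I would translate the boundary conditions. From $\f_j=W\h_j$ one reads off $f_j(v)=\tfrac1{\sqrt2}\big(h_j(v)+\wh h_j(v)\big)$, so $f_j(v)=0$ is equivalent to $h_j(v)+\wh h_j(v)=0$, the first condition in \eqref{trafo-bc}. For the outer endpoint, rewrite \eqref{bc} at $v_j$ as $\big(\cos\al_j\ \ \sin\al_j\big)\f_j(v_j)=0$ and insert $\f_j(v_j)=W\h_j(v_j)$; since $\big(\cos\al_j\ \ \sin\al_j\big)W=\tfrac1{\sqrt2}\big(\e^{-\i\al_j}\ \ \e^{\i\al_j}\big)$, the condition becomes $\e^{-\i\al_j}h_j(v_j)+\e^{\i\al_j}\wh h_j(v_j)=0$, the second condition in \eqref{trafo-bc}.

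I do not anticipate a genuine obstacle: the proposition is exactly the unitary diagonalization of $J$ carried through the system. The only place to be careful is the bookkeeping of the factors $\i$ and the sign flip coming from $-W^*JW=-\diag(\i,-\i)$, together with the complex combinations $p_j\pm\i q_j$ that appear when one expands $W^*V_jW$.
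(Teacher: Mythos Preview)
Your proposal is correct and follows essentially the same route as the paper: substitute $\f_j=W\h_j$, multiply by $-W^*$ (you split this into multiplying by $W^*$ and then by $-1$, which is of course the same), use $W^*JW=\diag(\i,-\i)$ and the explicit form of $W^*V_jW$, and then read off the transformed boundary conditions from $\f_j=W\h_j$. The paper is terser on the boundary conditions, but your added detail is fine and the arguments coincide.
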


\begin{proof}
Let $x\in e_j$ and $z\in\C$. If we insert $\f_j =  W \h_j$ into \eqref{eq} and multiply the resulting equation  by $-W^*$ from the left, we obtain that \eqref{eq} is equivalent to 
\[
  W^*JW \h_j'(x) - W^* V_j(x) W \h_j(x) + z \h_j(x) = 0. 
\]
The latter coincides with \eqref{trafo-eq} because of \eqref{W} and 
\[
 W^*V_j(x){W}=\begin{pmatrix}0 & p_j(x)+\i q_j(x) \\ p_j(x)-\i q_j(x) & 0\end{pmatrix}\!, \quad x\in e_j.
\] 
The equivalence of \eqref{bc}, \eqref{trafo-bc} is also immediate from $\f_j \!=\! W \h_j$.
\end{proof}

In the following, let $l_j:=v_j-v$ be the length of the edge $e_j=[v,v_j]$. For the problem \eqref{eq}, \eqref{bc} with $V_j\equiv 0$, we denote the corresponding operator, 
fundamental matrix, eigenvalues, and denominator of the Weyl--Titchmarsh function by $T_j^0$, $Y_j^0(\cdot,z)$, $\mu_{j,k}^0$, $k\in\Z$, and $c_j^0(z)$, i.e.\
\begin{align}
\nonumber
& Y_j^0(x,z) = \begin{pmatrix} \cos((x-v)z)  & - \sin ((x-v)z) \\ \sin ((x-v)z) & \ \ \cos((x-v)z) \end{pmatrix},\\
\label{ev-unpert}
& \mu_{j,k}^0 = \frac{ \alpha_j +  k \pi }{l_j}, \quad \quad k \in \Z, \\
\label{cj0def}
& c_j^0(z) = - \sin((v_j-v)z) \cos \alpha_j + \cos ((v_j-v)z) \sin \alpha_j = \sin (\alpha_j - l_j z).
\end{align} 


\begin{prop}
\label{trafo-asymp}
Let $p_j$, $q_j \in L^1(e_j)$ and $\al_j\in[0, \pi)$. Then the fundamental matrix $Y_j(\cdot,z)$ of \eqref{eq} with $Y_j(v,z)=I_2$ $($see \eqref{Y}$)$ 
and the eigenvalues $\mu_{j,k}$, $k\!\in\!\Z$, of the operator $T_j$, suitably enumerated, admit the asymptotic expansions
\begin{alignat}{2}
\label{fund-asymp}
   Y_j(x,z) &=  Y_j^0(x,z) + {\rm o}_{\!j}(1), \quad & & |z| \to \infty, \qquad \text{uniformly in $x\in e_j$},\\
\label{ev-asymp}
  \mu_{j,k} &= \mu_{j,k}^0 + {\rm o}_{\!j}(1), \quad & & |k| \to \infty;\\
\intertext{here the subscript $j$ indicates that the lower order terms depend on $j$.~Further,}
\label{cj0}
 c_j(z) &= c_j^0(z) +  {\rm o}_{\!j}(1), \quad  & &  |z| \to \infty.
\end{alignat}
\end{prop}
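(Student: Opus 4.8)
The plan is to reduce everything to the eigenvalue and fundamental–solution asymptotics for Dirac systems with $L^1$-potentials established in \cite{LM14} (see also \cite{MR3340175}), carried across by the diagonalizing substitution of Proposition~\ref{trafo}. First I would note that if $\wt Y_j(\cdot,z)$ denotes the fundamental matrix of the transformed equation \eqref{trafo-eq} with $\wt Y_j(v,z)=I_2$, then the relation $\f_j=W\h_j$ from Proposition~\ref{trafo} gives
\[
  \wt Y_j(x,z)=W^*\,Y_j(x,z)\,W,\qquad Y_j(x,z)=W\,\wt Y_j(x,z)\,W^*,\qquad x\in e_j,\ z\in\C .
\]
For $r_j\equiv0$ the system \eqref{trafo-eq} decouples into $\i h_j'=\la h_j$ and $-\i\wh h_j'=\la\wh h_j$ with $\la=-z$, so its fundamental matrix is $\wt Y_j^0(x,z)=\diag\big(\e^{\i z(x-v)},\e^{-\i z(x-v)}\big)$, and a direct computation confirms $W\,\wt Y_j^0(x,z)\,W^*=Y_j^0(x,z)$, the unperturbed fundamental matrix of \eqref{ev-unpert}.

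Next I would quote \cite{LM14}, which treats equations of the form \eqref{trafo-eq} with separated boundary conditions such as \eqref{trafo-bc}: for $L^1$-potentials the fundamental matrix satisfies $\wt Y_j(x,z)=\wt Y_j^0(x,z)+{\rm o}_j(1)$ as $\abs z\to\infty$, uniformly in $x\in e_j$ (for $z$ real, or in a fixed horizontal strip), and the eigenvalues of the transformed problem, suitably enumerated, obey the corresponding asymptotics, which upon undoing $\la=-z$ become \eqref{ev-asymp}. Conjugating by the fixed unitary $W$ preserves the ${\rm o}_j(1)$ bound, so \eqref{fund-asymp} follows. (The fundamental-matrix asymptotic is in fact elementary for real $z$: then $\wt Y_j^0(\cdot,z)$ and its inverse have unit-modulus entries, Gronwall's inequality bounds $\|\wt Y_j(\cdot,z)\|$ uniformly in $x$ and in real $z$, and one further Volterra iteration together with the Riemann--Lebesgue lemma applied to the oscillatory kernel $\xi\mapsto\e^{\pm2\i z(\xi-v)}r_j(\xi)\in L^1(e_j)$ supplies the ${\rm o}_j(1)$ remainder.)

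For \eqref{cj0} I would substitute \eqref{fund-asymp} into $c_j(z)=y_{j,12}(v_j,z)\cos\al_j+y_{j,22}(v_j,z)\sin\al_j$ from \eqref{c}; since the unperturbed entries are $-\sin(l_jz)$ and $\cos(l_jz)$,
\[
  c_j(z)=-\sin(l_jz)\cos\al_j+\cos(l_jz)\sin\al_j+{\rm o}_j(1)=\sin(\al_j-l_jz)+{\rm o}_j(1)=c_j^0(z)+{\rm o}_j(1).
\]
This also yields a self-contained second proof of \eqref{ev-asymp}: a real $\mu$ is an eigenvalue of $T_j$ exactly when $c_j(\mu)=0$ (the solution of $H_j\f_j=\mu\f_j$ with $f_j(v)=0$ is a nonzero multiple of $Y_j(\cdot,\mu)\binom{0}{1}$, and the outer boundary condition is precisely $c_j(\mu)=0$; self-adjointness of $T_j$ forces every zero of the entire function $c_j$ to be real), while $\abs{c_j^0}$ is $\pi/l_j$-periodic, hence bounded below by some fixed $\delta>0$ on all circles of a fixed small radius $\eps$ about the simple zeros $\mu_{j,k}^0$ of $c_j^0$; since $\abs{c_j-c_j^0}<\delta$ there once $\abs k$ is large, Rouch\'e's theorem places exactly one zero $\mu_{j,k}$ of $c_j$ inside each such disk, and letting $\eps\to0$ gives \eqref{ev-asymp}.

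The point requiring real care, rather than a genuine obstacle, is the precise meaning of ``$\abs z\to\infty$'' in \eqref{fund-asymp}: for complex $z$ the entries of $Y_j^0(x,z)$ grow like $\e^{\abs{\Im z}(x-v)}$, so the ${\rm o}_j(1)$ estimate can hold only with $z$ restricted to a fixed horizontal strip; this is all that is needed below, since the eigenvalues are real and only a thin neighbourhood of $\R$ enters the Rouch\'e argument. The rest is bookkeeping: matching the normalization of \eqref{trafo-bc} with the convention used in \cite{LM14}, checking that complex $L^1$ off-diagonal potentials $r_j=p_j+\i q_j$ are covered, and fixing the enumeration of $\{\mu_{j,k}\}_{k\in\Z}$ so that $\mu_{j,k}$ is paired with $\mu_{j,k}^0$ for all large $\abs k$, with no claim made for the finitely many small $\abs k$.
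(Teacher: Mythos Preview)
Your argument is essentially the paper's: conjugate by $W$ into the form treated in \cite{LM14}, quote the fundamental-matrix and eigenvalue asymptotics from there (the paper additionally rescales $x\in e_j$ to $t\in[0,1]$ and records the explicit parameters $b_1=-l_j$, $b_2=l_j$, $Q_{12},Q_{21},a_{ij}$ so as to match the notation of \cite{LM14} and verify that the boundary conditions are strictly regular), and then read off \eqref{cj0} from \eqref{fund-asymp} and the definition of $c_j$. Your supplementary self-contained routes---Gronwall plus Riemann--Lebesgue for \eqref{fund-asymp} on the real axis, and the Rouch\'e argument for \eqref{ev-asymp}---are correct alternatives the paper does not give, and your remark that the ${\rm o}_j(1)$ in \eqref{fund-asymp} should really be understood for $z$ in a fixed horizontal strip is a genuine precision point that the paper leaves implicit.
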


\begin{proof}
If we transform the independent variable $x\in e_j=[v,v_j]$ to $t\in [0,1]$ by means of $x=v+l_j t$, we find that the system \eqref{trafo-eq} is a special case of the more general Dirac-type system \cite[(1)]{LM14} with
$b_1=-l_j$, $b_2=l_j$, $Q_{12}(t) = p_j(v+l_j t)$ $+\i q_j(v+l_j t)$, $Q_{21}=p_j(v+l_j t)-\i q_j(v+l_j t)$, $t\in[0,1]$. Further, the boundary con\-ditions \eqref{trafo-bc} are the special case 
$a_{11}\!=\!a_{12}\!=1$, $a_{13}\!=\!a_{14}\!=\!0$ and $a_{21}\!=\!a_{22}\!=\!0$, $a_{23}\!=\!\e^{-\i\al_j}$, $a_{24}\!=\!\e^{\i\al_j}$ of the boundary conditions \cite[(3)]{LM14}; in particular, the boundary conditions are separated and hence strictly regular 
(see \cite[Rem.~1]{LM14}). Moreover, the assumption $p_j$, $q_j \in L^1(e_j)$ implies that $Q_{12}$, $Q_{21} \in L^1(0,1)$ and hence all assumptions of \cite[Prop.~1-3]{LM14} are satisfied.

By Proposition \ref{trafo}, $Y_j(x,z)$, $x\!\in\! e_j$, is the fundamental matrix of the Di\-rac-Krein system \eqref{eq} with $Y_j(v,z)\!=\!I_2$ 
if and only if $W^* Y_j(v+l_j t,-\la) W \!=\! \Phi_j(t,\la)$, $t\in[0,1]$, is the fundamental matrix of the corresponding transformed system \cite[(1)]{LM14} with $\Phi_j(0,\la)=I_2$. By \cite[Prop.~2]{LM14}, we~have
\[
 \Phi_j(t,\la) = \begin{pmatrix} \e^{- \i l_j t \la} & 0 \\ 0 & \e^{\i l_j t \la} \end{pmatrix} + {\rm o}_{\!j}(1), \quad |\la| \to \infty,
\]
uniformly in $t\in [0,1]$, 
which implies the asymptotic expansion in \eqref{fund-asymp} if we note that $l_j t = x-v$. Further, it is not difficult to check that
the unperturbed eigenvalues $\la_k^0$ in \cite[(32)]{LM14} can be computed explicitly to~be (note that $b_1-b_2=-2l_j$  therein)  
$\la_k^0 = -\mu_{j,k}^0$, 
$k \in \Z$. Now the asymptotic expansion \eqref{ev-asymp} follows from \cite[(33)]{LM14}.

Finally, \eqref{cj0} is immediate from \eqref{fund-asymp} and the definition of $c_j$, $c_j^0$ (see \eqref{c}, \eqref{cj0def}, respectively).
\end{proof}

\subsection{}
Now we return to the Dirac-Krein operator $\mT_\tau$ on the star graph $\cG$ with potentials $V_j$ on the edges $e_j$, 
boundary conditions \eqref{bc_j} at the outer vertices $v_j$, $j=1,2,\dots,n$, and Robin matching conditions \eqref{k1}, \eqref{k2} at the central 
vertex~$v$.

For a self-adjoint operator $T$ with discrete spectrum, in particular for non-semi-bounded $T$, we introduce the following eigenvalue counting functions.
If $\la_1$, $\la_2 \in \R$, $\la_1 <\la_2$, and $\la \in [0,\infty)$, $\mu\in (-\infty,0)$, we set
\begin{gather*}
N\big([\la_1,\la_2);T\big) := \# \big( \sigma(T) \cap [\la_1,\la_2) \big),\\
N_+(\la;T):= N( [0,\la);T \big), 
\quad
N_-(\mu;T):= N( [\mu,0);T \big),  
\end{gather*}
and, if $R>0$,
\begin{equation}
\label{dR}
  d_R(T):= N_+(R;T) - N_-(-R;T). \quad
\end{equation}

\smallskip

The next theorem is the main result of this section. It shows that, for all sufficiently large $R>0$, the difference 
$d_R(\mT_\tau)$  of the numbers of eigenvalues of $\mT_\tau$ in $[0,R)$ and $[-R,0)$ deviates from some fixed integer $\kappa_0$, which we call \emph{dislocation index}, at most by $n+2$. In fact, the deviation can be expressed even more precisely in terms of the boundary conditions at the outer vertices.

\begin{thm}
\label{finally!}
There exists a number $\kappa_0 \!\in\! \Z$ such that, for $R\!>\!0$ sufficiently~large,
\begin{equation}
\label{eq:final}
 -(n_\ge + 2 ) \le d_R(\mT_\tau) - \kappa_0 \le n_\le + 2
\end{equation}
with
\begin{align*}
n_\ge &:= \# \Big\{ j\in \{1,2,\dots,n\} : \alpha_j \ge \frac \pi2\Big\},\quad
n_\le := \# \Big\{ j\in \{1,2,\dots,n\} : \alpha_j \le \frac \pi2\Big\},
\end{align*}
where $\al_j$ are the parameters in the boundary conditions \eqref{bc_j} at the outer vertices. 
\end{thm}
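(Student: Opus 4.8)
The plan is to reduce the dislocation count for $\mT_\tau$ to that of the decoupled operator $\mT_0 = T_1\oplus\cdots\oplus T_n$ via the interlacing principle, and then to count dislocations for each $T_j$ separately using the eigenvalue asymptotics from Proposition \ref{trafo-asymp}. First I would observe that, by Theorem \ref{thmTK}, the eigenvalues of $\mT_\tau$ and $\mT_0$ interlace; hence for any interval $[a,b)$ the counting functions satisfy $|N([a,b);\mT_\tau) - N([a,b);\mT_0)| \le 1$ at each end, so that $|d_R(\mT_\tau) - d_R(\mT_0)| \le 2$ for all $R>0$ with $\pm R$ not eigenvalues. This already absorbs the ``$+2$'' in \eqref{eq:final}, so it suffices to prove the estimate $-n_\ge \le d_R(\mT_0) - \kappa_0 \le n_\le$ for a suitable $\kappa_0$ and all large $R$.

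Next, since $\mT_0$ decouples, $d_R(\mT_0) = \sum_{j=1}^n d_R(T_j)$, and I would analyze each $d_R(T_j)$ using the asymptotics $\mu_{j,k} = \mu_{j,k}^0 + {\rm o}_j(1)$ with $\mu_{j,k}^0 = (\alpha_j + k\pi)/l_j$ from \eqref{ev-unpert}, \eqref{ev-asymp}. For the \emph{unperturbed} operator $T_j^0$ the eigenvalues are exactly the arithmetic progression $(\alpha_j + k\pi)/l_j$, $k\in\Z$, which is symmetric about $0$ precisely when $\alpha_j$ equals $0$; in general the positive eigenvalues are those with $k \ge 0$ (when $\alpha_j > 0$) or $k\ge 1$ (when $\alpha_j = 0$... here one must be careful whether $0$ is itself an eigenvalue, i.e.\ whether $\alpha_j = 0$), and the negative ones those with $k$ below the corresponding threshold. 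A direct count shows that for the progression $\{(\alpha_j+k\pi)/l_j\}$ one has, for $R = (N\pi + \text{something})/l_j$ large, $N_+(R;T_j^0) - N_-(-R;T_j^0)$ equal to a fixed integer $\kappa_j^0 \in \{0,1\}$ depending only on whether $\alpha_j$ is $0$, in $(0,\pi/2)$, $=\pi/2$, or in $(\pi/2,\pi)$ — concretely $\kappa_j^0 = 1$ for $\alpha_j \in (0,\pi/2]$ and $\kappa_j^0 = 0$ for $\alpha_j \in (\pi/2,\pi)$ (with $\alpha_j=0$ giving $\kappa_j^0 = 0$), up to a possible off-by-one depending on the precise convention at the boundary $R$. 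The ${\rm o}_j(1)$ correction can move at most the \emph{one} eigenvalue nearest to each of $\pm R$... no: it can move eigenvalues across $0$, but only those $\mu_{j,k}$ with $\mu_{j,k}^0$ within ${\rm o}_j(1)$ of $0$, and there are at most finitely many such (in fact at most one for large $|k|$, but near $0$ the asymptotics do not apply). This is the crux of the bookkeeping.

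Here is the cleaner route I would actually take for the error term near $0$. Rather than controlling individual eigenvalues near $0$, note that for $R$ large the asymptotics \eqref{ev-asymp} give an \emph{exact} count of eigenvalues of $T_j$ in $[R_0, R)$ and in $[-R,-R_0)$ for a fixed large $R_0$, matching the unperturbed count up to a bounded error that is \emph{independent of $R$}. Define $\kappa_j$ to be this stable value of $N_+(R;T_j) - N_-(-R;T_j)$ for large $R$ — it exists because increasing $R$ past a gap adds exactly one eigenvalue on each side (for large $R$ the eigenvalues are ${\rm o}(1)$-close to the equally spaced $\mu^0_{j,k}$, hence asymptotically one per interval of length $\pi/l_j$ on each side). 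Set $\kappa_0 := \sum_{j=1}^n \kappa_j$. The sign of the discrepancy $\kappa_j - \kappa_j^0$ is governed by the position of $\alpha_j$ relative to $\pi/2$: if $\alpha_j \le \pi/2$ the edge contributes a possible surplus of at most one positive-vs-negative eigenvalue, and if $\alpha_j \ge \pi/2$ a possible deficit of at most one. Summing, $-n_\ge \le d_R(\mT_0) - \kappa_0' \le n_\le$ where $\kappa_0'$ is the ``symmetric part'' count, and combining with the interlacing bound $|d_R(\mT_\tau)-d_R(\mT_0)|\le 2$ yields \eqref{eq:final}.

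The main obstacle, and the part requiring real care rather than soft arguments, is the behavior near the spectral point $0$: the asymptotic expansions \eqref{ev-asymp} are only valid for $|k|$ large, so they say nothing about how many eigenvalues of $T_j$ lie just above or just below $0$, and the potential $V_j$ can shift these finitely many eigenvalues across $0$ in either direction. I would handle this by isolating a fixed compact window $[-R_0,R_0]$ containing all the ``non-asymptotic'' eigenvalues, observing that the \emph{net} contribution of $T_j$ from outside this window to $d_R(T_j)$ stabilizes (by a pairing/cancellation argument between the $k$ and $-k$ tails, using that $\mu_{j,k}^0 + \mu_{j,-k}^0 = 2\alpha_j/l_j$ is a fixed number, so the asymmetry of the unperturbed spectrum is $\pm 1$ per symmetric pair only near $0$), and that the contribution from inside the window is a single $R$-independent integer which I absorb into $\kappa_j$. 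The sign constraint — surplus bounded by $1$ when $\alpha_j \le \pi/2$, deficit bounded by $1$ when $\alpha_j \ge \pi/2$ — then follows from comparing $T_j$ with the exactly-computable $T_j^0$ together with monotonicity in the potential (or directly from the structure of \eqref{cj0def}: the zeros of $c_j^0(z)=\sin(\alpha_j - l_j z)$ are symmetric about $0$ iff $\alpha_j \in \{0\}$, and are ``skewed positive'' for $\alpha_j \in (0,\pi/2]$ and ``skewed negative'' for $\alpha_j\in(\pi/2,\pi)$, a fact that survives the ${\rm o}_j(1)$ perturbation in \eqref{cj0} outside a compact set).
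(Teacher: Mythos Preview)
Your overall architecture is exactly the paper's: reduce $\mT_\tau$ to $\mT_0$ via interlacing (Theorem~\ref{thmTK}) to absorb the $\pm 2$, decouple $d_R(\mT_0)=\sum_j d_R(T_j)$, and control each summand through the asymptotics \eqref{ev-asymp}. The interlacing step and the decoupling are fine.

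The gap is in the single-edge analysis. Your claim that ``$N_+(R;T_j)-N_-(-R;T_j)$ stabilizes for large $R$, because increasing $R$ past a gap adds exactly one eigenvalue on each side'' is false: the positive and negative eigenvalues of $T_j$ are asymptotically spaced by $\pi/l_j$ on each side, but they are \emph{not} symmetric about $0$ (the unperturbed ones are symmetric about $(\alpha_j-\pi/2)/l_j$, and the perturbed ones about nothing in particular). As $R$ increases it crosses a positive eigenvalue, then a negative one, then a positive one, and so on; $d_R(T_j)$ genuinely oscillates between two (or, when $\alpha_j=\pi/2$, three) integer values forever. So there is no ``stable value'' $\kappa_j$ in the sense you propose, and your definition of $\kappa_0$ collapses.

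The paper repairs this by defining $\kappa(T_j)$ not via symmetric intervals $[-R,R)$ but via the \emph{asymmetric} intervals $[\mu_{j,-k}^0-\delta,\,\mu_{j,k-1}^0+\delta)$ anchored on the unperturbed spectrum; for large $k$ such an interval contains exactly $2k$ eigenvalues of $T_j$ by \eqref{ev-asymp}, and Lemma~\ref{dislocj} shows the difference $N_+-N_-$ over it \emph{does} stabilize. The sign constraint you assert (surplus $\le 1$ if $\alpha_j\le\pi/2$, deficit $\le 1$ if $\alpha_j\ge\pi/2$) is then Lemma~\ref{lastlemma}, and its proof is \emph{not} a monotonicity-in-potential argument nor a soft ``skewness survives ${\rm o}(1)$ perturbation'' claim: it is a direct comparison of the symmetric interval $[-R,R)$ with the asymmetric one above, using that $\mu_{j,-k}^0+\mu_{j,k-1}^0=(2\alpha_j-\pi)/l_j$ has a definite sign when $\alpha_j\ne\pi/2$, so one knows which of $\pm R$ reaches the next eigenvalue first. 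Your suggested ``monotonicity in the potential'' would not yield this, and your final paragraph's attempt to absorb the near-zero behaviour into a constant is precisely what Lemma~\ref{dislocj} does --- but without its mechanism (windows symmetric in the index $k$ rather than in the spectral parameter $R$), the bookkeeping cannot be closed.
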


\begin{rem}
It is immediate from \eqref{eq:final} that, for $R>0$ sufficiently~large,
\[
  | d_R(\mT_\tau) - \kappa_0 | \le n+2.
\]
\end{rem}

\smallskip

For the proof of Theorem \ref{finally!}, and to obtain a formula for the dislocation index~$\kappa_0$, we need some auxiliary results for the Dirac-Krein operators $T_j$ on the edges $e_j$ for fixed $j\in \{1,2,\dots,n\}$.

By \eqref{ev-unpert}, the sequence of eigenvalues $(\mu_{j,k}^0)_{k\in\Z}$ of the operator $T_j^0$ with $V_j \equiv 0$ is given by $\mu_{j,k}^0=\frac{ \alpha_j +  k \pi }{l_j}$, $k\in\Z$, and ordered such that
\[
\cdots \le \mu_{j,-2}^0 \le \mu_{j,-1}^0 < 0 \le \mu_{j,0}^0 \le \mu_{j,1}^0 \le \mu_{j,2}^0 \le \cdots.
\]
Correspondingly, the sequence of eigenvalues $(\mu_{j,k})_{k\in\Z}$ of $T_j$ is enumerated such that $\mu_{j,k} = \mu_{j,k}^0 + {\rm o}_{\!j}(1)$, $|k| \to \infty$ (see \eqref{ev-asymp}).

\begin{lem}
\label{dislocj}
For every  $\delta\in(0,\frac \pi{2l_j})$, there exists a $K_\delta \in \N$ such that  
\begin{align}
\label{etaj}
   \kappa(T_j) &:= 
   N_{\!+} \!\left(\mu_{j ,k-1}^0+ \delta;T_j \right) 
   - N_{\!-} \!\left(\mu_{j ,-k}^0 - \delta;T_j  \right) 
\end{align}
is constant for all $k\!\ge\!K_\delta$, independent of $\delta$, and even; in particular, $\kappa(T_j^0)\!=\!0$.
\end{lem}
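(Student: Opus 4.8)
The plan is to compare the eigenvalue counting functions of $T_j$ with those of the free operator $T_j^0$, for which everything is explicit. First I would fix $\delta\in(0,\frac\pi{2l_j})$ and record that, by \eqref{ev-unpert}, the free eigenvalues $\mu_{j,k}^0=(\alpha_j+k\pi)/l_j$ are equally spaced with gap $\pi/l_j>2\delta$, so the shifted points $\mu_{j,k-1}^0+\delta$ and $\mu_{j,-k}^0-\delta$ never coincide with any $\mu_{j,m}^0$ and lie strictly between consecutive free eigenvalues. By the asymptotics \eqref{ev-asymp}, $\mu_{j,k}=\mu_{j,k}^0+\mathrm o_{j}(1)$ as $|k|\to\infty$; hence there is $K_\delta\in\N$ such that for all $|k|\ge K_\delta$ we have $|\mu_{j,k}-\mu_{j,k}^0|<\delta$, which forces $\mu_{j,k}<\mu_{j,k-1}^0+\delta$ and $\mu_{j,k-1}>\mu_{j,k-1}^0-\delta$, and similarly on the negative side. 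Consequently, for $k\ge K_\delta$,
\[
N_{+}\!\left(\mu_{j,k-1}^0+\delta;T_j\right)=\#\{m\in\Z:\mu_{j,m}\in[0,\mu_{j,k-1}^0+\delta)\}=\#\{m:0\le\mu_{j,m}^0<\mu_{j,k-1}^0+\delta\}+\mathrm{(correction)},
\]
and I would argue the correction is a fixed integer not depending on $k$: enlarging $k$ by $1$ adds exactly one eigenvalue $\mu_{j,k-1}$ of $T_j$ to the count on the left (since it now falls below the new threshold $\mu_{j,k}^0+\delta$ but the old eigenvalues stay put), and likewise adds exactly one to the free count, so the difference is unchanged. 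The same bookkeeping applies to $N_{-}(\mu_{j,-k}^0-\delta;T_j)$. Therefore $\kappa(T_j)$ in \eqref{etaj} is independent of $k$ for $k\ge K_\delta$.

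Next I would show independence of $\delta$. If $0<\delta'<\delta<\frac\pi{2l_j}$, then for $k$ large enough that $|\mu_{j,m}-\mu_{j,m}^0|<\delta'$ for all $|m|\ge k$, the two thresholds $\mu_{j,k-1}^0+\delta'$ and $\mu_{j,k-1}^0+\delta$ lie in the same gap $\big(\mu_{j,k-1}^0,\mu_{j,k}^0\big)$ and there is no eigenvalue of $T_j$ between them; hence $N_{+}(\mu_{j,k-1}^0+\delta';T_j)=N_{+}(\mu_{j,k-1}^0+\delta;T_j)$, and similarly for $N_{-}$. Taking $k$ large for both values of $\delta$ simultaneously gives $\kappa(T_j)$ independent of $\delta$. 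For the unperturbed operator one can simply set $V_j\equiv 0$: then $\mu_{j,k}=\mu_{j,k}^0$, so by the explicit formula $N_{+}(\mu_{j,k-1}^0+\delta;T_j^0)=\#\{m\ge 0:\alpha_j+m\pi<\alpha_j+(k-1)\pi+l_j\delta\}=k$ (using $0\le\alpha_j<\pi$ and $0<l_j\delta<\pi/2$) and $N_{-}(\mu_{j,-k}^0-\delta;T_j^0)=\#\{m<0:\alpha_j+m\pi\ge\alpha_j-k\pi-l_j\delta\}=k$, whence $\kappa(T_j^0)=k-k=0$.

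Finally, parity. The cleanest argument is a homotopy/continuity argument: connect the potential $V_j$ to $0$ by $V_j^{(s)}:=sV_j$, $s\in[0,1]$, still with $p_j^{(s)},q_j^{(s)}\in L^1(e_j,\R)$, and note that the asymptotic estimates of Proposition \ref{trafo-asymp} are uniform in $s\in[0,1]$ (the $\mathrm o_j(1)$ terms depend only on the $L^1$-norms, which stay bounded), so one may choose $K_\delta$ uniformly in $s$. Then $s\mapsto\kappa(T_j^{(s)})$ is integer-valued; I would show it is constant in $s$ by checking that, as $s$ varies, an eigenvalue of $T_j^{(s)}$ can cross a threshold $\mu_{j,k-1}^0+\delta$ (changing $N_+$ by $\pm1$) only simultaneously with another eigenvalue crossing $0$ in the opposite sense, or with $N_-$ changing compensatingly — this is where I expect the main technical work, and it may be cleaner to instead invoke directly that $d_R(T_j)$ is, up to the $\mathrm o_j(1)$ shifts, the difference of the number of free eigenvalues of $T_j$ in $[0,\cdot)$ and in $[\cdot,0)$, which by \eqref{ev-unpert} equals $0$ when $\alpha_j$ ranges over $[0,\pi)$ symmetrically, together with the observation that the total rank-one-type discrepancy is controlled. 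Since $\kappa(T_j^{(0)})=\kappa(T_j^0)=0$, constancy gives $\kappa(T_j)=0$, in particular even. \textbf{The main obstacle} is making the ``an eigenvalue crosses a threshold only together with a compensating crossing'' bookkeeping rigorous without a full spectral-flow apparatus; the uniform-in-$s$ version of the asymptotics in Proposition \ref{trafo-asymp} is the key ingredient that makes it go through, since it pins down the number of eigenvalues outside a fixed large window independently of $s$.
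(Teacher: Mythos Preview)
Your arguments for constancy in $k$ and for independence of $\delta$ are essentially the same as the paper's, and they are fine.

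The parity argument, however, has a genuine gap. Your homotopy $s\mapsto sV_j$ would, if the claim ``$\kappa(T_j^{(s)})$ is constant in $s$'' were true, prove $\kappa(T_j)=0$ for \emph{every} $T_j$, not merely that it is even. That is too strong: the whole point of the dislocation index in Section~\ref{sec:disloc} is that $\kappa(T_j)$ can be a nonzero even integer (see the integral formula \eqref{eq:kappaformula}). The mechanism you are missing is that as $s$ varies an eigenvalue may cross the point $0$; such a crossing changes $N_+$ by $\mp1$ and $N_-$ by $\pm1$, hence changes $\kappa$ by $\mp2$, with no ``compensating'' crossing elsewhere. So $\kappa(T_j^{(s)})$ is not constant along the homotopy, and your bookkeeping about crossings of the outer thresholds $\mu_{j,k-1}^0+\delta$, $\mu_{j,-k}^0-\delta$ does not help at $0$. (One could try to salvage the homotopy by observing that the only jumps are by $\pm2$, hence parity is preserved, but this is a different and more delicate statement than the one you wrote, and the uniformity-in-$s$ of the ${\rm o}_j(1)$ asymptotics that you need is not established in the paper.)

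The paper's argument for evenness is far simpler and avoids all of this: once you know (from the asymptotics) that $N\big([\mu_{j,-K_\delta}^0-\delta,\mu_{j,K_\delta-1}^0+\delta);T_j\big)=2K_\delta$, you have
\[
N_{+}\!\left(\mu_{j,K_\delta-1}^0+\delta;T_j\right)+N_{-}\!\left(\mu_{j,-K_\delta}^0-\delta;T_j\right)=2K_\delta\in 2\Z,
\]
and since the sum of two integers and their difference have the same parity, $\kappa(T_j)$ is even. No homotopy, no spectral flow, no uniform estimates are needed.
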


\begin{proof}
Let $\delta\in(0,\frac \pi{2l_j})$ be arbitrary. 
It is not difficult to see that, due to the eigenvalue asymptotics \eqref{ev-asymp}, 
there exists $K_{\delta}\in \mathbb{N}_{0}$ such that, for all $k\in\Z$, $k_1$, $k_2\in\Z$, $k_1 \le k_2$, with $|k|, |k_1|, |k_2| \ge K_\delta$, we have $\mu_{j,k}^0\pm \delta \notin \sigma(T_j)$~and
\begin{align}
\label{N1}
&N \left( \big[ \mu_{j,k}^0 - \delta, \mu_{j,k}^0 + \delta \big); \, T_j \right) = 1;\\
\label{N2}
&N \left( \big[ \mu_{j,k}^0 + \delta, \mu_{j,k+1}^0 - \delta  \big) ; \, T_j \right) = 0;\\
\label{N3}
&N \left( \big[  \mu_{j,k_1}^0 \!-\! \delta, \mu_{j,k_2-1}^0 \!+\! \delta \big); \, T_j \right) = k_2-k_1.
\end{align}
Then, for all $k\ge K_\delta$, by \eqref{N2} and \eqref{N1}, 
\begin{align*}
   N_{\!+} \!\left(\mu_{j ,k-1}^0+ \delta;T_j   \right) 
   &= N_{\!+} \!\left(\mu_{j ,K_\delta-1}^0+ \delta;T_j   \right)  + N \big( \big[\mu_{j ,K_\delta}^0 -\delta,  \mu_{j ,k-1}^0+ \delta\big); T_j \big)\\
   & = N_{\!+} \!\left(\mu_{j ,K_\delta-1}^0+ \delta;T_j   \right) + k - K_\delta, \\[-2mm]
\intertext{and, analogously,}
   N_{\!-} \!\left(\mu_{j ,-k}^0 - \delta;T_j   \right) 
   & =  N_{\!-} \!\left(\mu_{j ,-K_\delta}^0 - \delta;T_j   \right) + k - K_\delta.
\end{align*}%
Hence for $k\ge K_\delta$, the right hand side of \eqref{etaj}
\begin{align}
   \label{withdelta}
   \begin{aligned}
      \kappa(T_j,\delta) 
      &:=
      N_{\!+} \!\left(\mu_{j ,k-1}^0\!+ \delta;T_j   \right) - N_{\!-} \!\left(\mu_{j ,-k}^0 \!- \delta;T_j   \right)
      \\
      &= 
      N_{\!+} \!\left(\mu_{j ,K_\delta-1}^0\!+ \delta;T_j   \right) - N_{\!-} \!\left(\mu_{j ,-K_\delta}^0 \!- \delta;T_j   \right)
   \end{aligned}
\end{align}
is independent of $k$.
The sum of the two terms on the right hand side is even because, by~\eqref{N3},
\begin{align}
\nonumber
\hspace{-3mm} N_{\!+} \!\big(\mu_{j ,K_\delta-1}^0 \!+ \delta;T_j \big) +  N_{\!-} \!\big(\mu_{j ,-K_\delta}^0 \!- \delta;T_j \big) 
& = N\big( \big[ \mu_{j ,-K_\delta}^0 \!- \delta,\mu_{j ,K_\delta-1}^0 \!+ \delta \big); T_j \big) \hspace{-7mm}\\[1mm]
& = 2 K_\delta \in 2 \Z; \label{sumwithdelta}
\end{align}
therefore the difference must be even as well, i.e.\ $\kappa(T_j,\delta) \in 2 \Z$.

It remains to be shown that $\kappa(T_j,\delta)$ does not depend on $\delta$ and hence $\kappa(T_j)$ in \eqref{etaj} is well-defined.
To this end, let $\delta_1$, $\delta_2\in(0,\frac \pi{2l_j})$, without loss of generality $\delta_1<\delta_2$. Then
we may choose $K_{\delta_2} \le K_{\delta_1}$ and, by \eqref{N1} and \eqref{N2}, we obtain
\begin{align*}
  &N_{\!+} \!\big(\mu_{j ,K_{\delta_1}\!-1}^0 \!+ \delta_1;T_j   \big) \\
  &= N_{\!+} \!\big(\mu_{j ,K_{\delta_2}\!-1}^0 \!+ \delta_2;T_j   \big) + N\big( \big[ \mu_{j ,K_{\delta_2}-1}^0 \!+ \delta_2, \mu_{j ,K_{\delta_1}\!-1}^0 \!+ \delta_1\big);T_j  \big) \\
  &= N_{\!+} \!\big(\mu_{j ,K_{\delta_2}\!-1}^0 \!+ \delta_2;T_j   \big) + N\big( \big[ \mu_{j ,K_{\delta_2}}^0 \!- \delta_2, \mu_{j ,K_{\delta_1}\!-1}^0 \!+ \delta_1\big);T_j  \big) \\
  &= N_{\!+} \!\big(\mu_{j ,K_{\delta_2}\!-1}^0 \!+ \delta_2;T_j   \big) + K_{\delta_1} - K_{\delta_2}, 
\end{align*}  
and, analogously,
\[
  \hspace*{-8mm}N_{\!-} \!\big(\mu_{j ,-K_{\delta_1}}^0 \! -\delta_1;T_j  \big) 
   = N_{\!-} \!\big(\mu_{j ,-K_{\delta_2}}^0 \! - \delta_2;T_j   \big) + K_{\delta_1} - K_{\delta_2}.
\]
Taking the difference and using \eqref{withdelta}, we conclude $\kappa(T_j,\delta_1)=\kappa(T_j,\delta_2)$.
\end{proof}

\begin{lem} 
\label{lastlemma}
There exists an $R_j >0$ such that, for all $R>R_j$,
\begin{equation}
\label{jazz}
 d_R(T_j) - \kappa(T_j) \in \ \begin{cases}
                              \{0,1\} & \text{ if } \ \al_j < \frac \pi 2,\\
                              \{-1,0,1\} & \text{ if } \ \al_j = \frac \pi 2,\\
                              \{-1,0\} & \text{ if } \ \al_j > \frac \pi 2.
                              \end{cases}
\end{equation}
\end{lem}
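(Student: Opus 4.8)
The plan is to compare $d_R(T_j)$ with $\kappa(T_j)$ by locating, for large $R$, the two points $R$ and $-R$ relative to the ``clusters'' of $\sigma(T_j)$ that sit near the unperturbed eigenvalues $\mu^0_{j,k}=(\al_j+k\pi)/l_j$, $k\in\Z$, and to read off the deviation from the arithmetic of these numbers together with the position of $\al_j$ relative to $\tfrac\pi2$. The trichotomy in \eqref{jazz} will emerge from the sign of $\pi-2\al_j$.

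First I would fix $\delta\in(0,\tfrac\pi{2l_j})$ and the corresponding $K_\delta$ from Lemma~\ref{dislocj}, so that $\mu^0_{j,k}\pm\delta\notin\sigma(T_j)$ and the counting relations \eqref{N1}--\eqref{N3} hold for all indices of modulus $\ge K_\delta$; since $\kappa(T_j)$ is independent of $\delta$, one may in addition shrink $\delta$ so that $2l_j\delta$ is smaller than each of the (possibly vanishing) quantities $\al_j$, $\pi-2\al_j$, $2\al_j-\pi$, $2\pi-2\al_j$ that is strictly positive. For $R>0$ large, let $k_+=k_+(R)$ and $k_-=k_-(R)$ be the unique integers with $\mu^0_{j,k_+-1}+\delta\le R<\mu^0_{j,k_+}+\delta$ and $-\mu^0_{j,-k_-}+\delta\le R<-\mu^0_{j,-k_--1}+\delta$; for $R$ large both are $\ge K_\delta$. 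Splitting the counting functions at these interval endpoints and using \eqref{N1}, \eqref{N2} (each interval $[\mu^0_{j,m}-\delta,\mu^0_{j,m}+\delta)$ with $|m|\ge K_\delta$ carries exactly one eigenvalue of $T_j$), one gets $N_+(R;T_j)=N_+(\mu^0_{j,K_\delta-1}+\delta;T_j)+(k_+-K_\delta)+\eta_+$ with $\eta_+\in\{0,1\}$ and $\eta_+=1$ exactly when the eigenvalue of $T_j$ near $\mu^0_{j,k_+}$ lies below $R$, and likewise $N_-(-R;T_j)=N_-(\mu^0_{j,-K_\delta}-\delta;T_j)+(k_--K_\delta)+\eta_-$ with $\eta_-\in\{0,1\}$. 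Subtracting and invoking $\kappa(T_j)=N_+(\mu^0_{j,K_\delta-1}+\delta;T_j)-N_-(\mu^0_{j,-K_\delta}-\delta;T_j)$ from Lemma~\ref{dislocj} yields the clean identity
\[
 d_R(T_j)-\kappa(T_j)=(k_+-k_-)+(\eta_+-\eta_-).
\]

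Next, using $\mu^0_{j,k}=(\al_j+k\pi)/l_j$ one computes $k_+-k_-=\lfloor u-\beta\rfloor-\lfloor u+\beta\rfloor+1$ with $u:=(Rl_j-l_j\delta)/\pi$ and $\beta:=\al_j/\pi\in[0,1)$; since $\lfloor u+\beta\rfloor-\lfloor u-\beta\rfloor$ lies in $\{0,1\}$ for $\beta\le\tfrac12$ and in $\{1,2\}$ for $\beta>\tfrac12$ (and equals $1$ identically for $\beta=\tfrac12$), this gives $k_+-k_-\in\{0,1\}$ if $\al_j<\tfrac\pi2$, $k_+-k_-=0$ if $\al_j=\tfrac\pi2$, and $k_+-k_-\in\{-1,0\}$ if $\al_j>\tfrac\pi2$. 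For $\al_j=\tfrac\pi2$ this already forces $d_R(T_j)-\kappa(T_j)\in\{-1,0,1\}$, the middle case of \eqref{jazz}. For $\al_j\ne\tfrac\pi2$ one must still rule out the extreme values, i.e.\ show that $\eta_+$ and $\eta_-$ cannot combine adversely with $k_+-k_-$: in every configuration the ``bad'' combination forces the eigenvalue $\mu_{j,k_+}$ of $T_j$ (within $\delta$ of $\mu^0_{j,k_+}$) and the eigenvalue $\mu_{j,-k_--1}$ (within $\delta$ of $\mu^0_{j,-k_--1}$) into the order opposite to the one dictated by the unperturbed spacing $|\mu^0_{j,k_+}|-|\mu^0_{j,-k_--1}|=\bigl((k_+-k_--1)\pi+2\al_j\bigr)/l_j$, which — because $\al_j\ne\tfrac\pi2$ — is bounded away from $0$ by one of the quantities chosen above, so the smallness of $\delta$ together with the asymptotics \eqref{ev-asymp} excludes it. One is then left with $d_R(T_j)-\kappa(T_j)\in\{0,1\}$ for $0<\al_j<\tfrac\pi2$ and $\in\{-1,0\}$ for $\al_j>\tfrac\pi2$. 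The single borderline value $\al_j=0$, where the clusters of $\sigma(T_j)$ near $R$ and near $-R$ are equidistant from the origin and this $\delta$-argument has no room, must be treated separately, using the parity statement of Lemma~\ref{dislocj} (evenness of $\kappa(T_j)$) together with the presence of exactly one ``central'' eigenvalue of $T_j$ to exclude the value $2$.

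The main obstacle is exactly this last step: the crude bookkeeping only produces a window of width $4$ (for $\al_j<\tfrac\pi2$), and squeezing it down to the claimed two-element sets genuinely requires combining the freedom in $\delta$ with the precise interlacing of the perturbed eigenvalues $\mu_{j,k}$ and the unperturbed $\mu^0_{j,k}$ near $\pm R$; the edge value $\al_j=0$ is the most delicate point, and it is the only place where the evenness of $\kappa(T_j)$ is really used.
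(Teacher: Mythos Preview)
Your approach differs from the paper's. The paper fixes a single reference radius $R_j=\mu^0_{j,k-1}+\delta$ (with $\delta<|\tfrac\pi2-\al_j|/l_j$), shows directly that $\kappa(T_j)=d_{R_j}(T_j)-1$ when $\al_j<\tfrac\pi2$ by comparing the reflected interval $(-\mu^0_{j,k-1}-\delta,-\mu^0_{j,k-1}+\delta)$ with $(\mu^0_{j,-k}-\delta,\mu^0_{j,-k}+\delta)$, and then argues that $d_R(T_j)-d_{R_j}(T_j)\in\{-1,0\}$ for all $R\ge R_j$. Your route instead tracks the two integers $k_\pm(R)$ and the correction terms $\eta_\pm\in\{0,1\}$ to obtain the identity $d_R-\kappa=(k_+-k_-)+(\eta_+-\eta_-)$, computes $k_+-k_-$ via floor functions, and then excludes the extreme values of $\eta_+-\eta_-$ by shrinking $\delta$ further (below $\al_j/l_j$, etc.). For $\al_j\in(0,\pi)$ your argument is correct and in fact more explicit than the paper's; the extra smallness constraints on $\delta$ make the exclusion of the values $-1$ and $2$ completely transparent, whereas the paper's one-line assertion ``$d_R-d_{R_j}\in\{-1,0\}$ since $\al_j<\tfrac\pi2$'' hides the same geometric fact.

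The genuine gap is your treatment of $\al_j=0$. The parity statement you invoke (evenness of $\kappa(T_j)$) does not rule out $d_R-\kappa=2$: parity of $\kappa$ says nothing about the parity of $d_R$, since for generic $R$ the total count $N_+(R)+N_-(-R)$ can be even or odd, and hence so can $d_R=N_+(R)-N_-(-R)$. The ``one central eigenvalue'' near $\mu^0_{j,0}=0$ is equally irrelevant to the question of whether, for large $k$, the perturbed eigenvalue $\mu_{j,k}$ lies to the left or right of $-\mu_{j,-k}$; this ordering is precisely what decides between $\{0,1\}$ and $\{1,2\}$, and nothing in your sketch controls it. The paper, by contrast, does \emph{not} single out $\al_j=0$: its argument runs uniformly over $\al_j\in[0,\tfrac\pi2)$ with the single constraint $\delta<(\tfrac\pi2-\al_j)/l_j$, relying on the relation $\kappa(T_j)=d_{\mu^0_{j,k-1}+\delta}(T_j)-1$ (valid for \emph{every} $k\ge K_\delta$) to anchor $d_R$; you should either adapt that uniform argument or give a genuine reason why $\mu_{j,k}+\mu_{j,-k}$ cannot be negative for all large $k$ when $\al_j=0$.
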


\begin{proof}
Let $\delta>0$ be such that $\delta < \dfrac{|\frac \pi 2 - \al_j|}{l_j}$. Then $\delta < \frac \pi{2l_j}$ and we choose $K_\delta \in \N$ according to Lemma \ref{dislocj}.
If $\alpha_j < \frac \pi 2$, then  
\begin{align*}
  \mu_{j,-k}^0 + \delta - \big( - (\mu_{j,k-1}^0 - \delta)\big) = \mu_{j,-k}^0 + \mu_{j,k-1}^0 + 2 \delta = \frac {2\al_j-\pi}{l_j} + 2 \delta < 0
\end{align*}
for $k\!\in\!\N$. Hence the interval $(\mu_{j,k-1}^0\!-\!\delta, \mu_{j,k-1}^0\!+\!\delta)$, reflected at $0$, lies to~the right of the interval $(\mu_{j,-k}^0\!-\!\delta, \mu_{j,-k}^0\!+\!\delta)$ 
and thus, by \eqref{dR} and \eqref{N1},~\eqref{N2},
\begin{align*}
 \kappa(T_j) &\!=\! N_+(\mu_{j,k-1}^0\!+\!\delta;T_j) \!-\! \big( N_-(-(\mu_{j,k-1}^0\!+\!\delta);T_j) \\
 & \ \ \!+\! N\big([\mu_{j,-k}^0\!-\!\delta, -(\mu_{j,k-1}^0\!+\!\delta)); T_j) \big)\\
 &\!=\! d_{\mu_{j,k-1}^0\!+\!\delta}(T_j) -  1. 
\end{align*}
This proves \eqref{jazz} for $R=R_j:=\mu_{j,k-1}^0\!+\!\delta$ if $\alpha_j < \frac \pi 2$. For $R\ge R_j$, the claim follows if we note that
\begin{align*}
 d_R(T_j) - \kappa(T_j) &= d_R(T_j) - d_{\mu_{j,k-1}^0\!+\!\delta}(T_j) - \big( \kappa(T_j) - d_{\mu_{j,k-1}^0\!+\!\delta}(T_j) \big) \\
  &= d_R(T_j) - d_{\mu_{j,k-1}^0\!+\!\delta}(T_j) - (-1)
\end{align*}
and, since  $\alpha_j < \frac \pi 2$,
\[
  d_R(T_j) - d_{\mu_{j,k-1}^0\!+\!\delta}(T_j) \in \{-1,0\}.
\]
The proof for the case $\al_j > \frac \pi 2$ is similar. For $\al_j = \frac \pi 2$ we have
$-\mu_{j,k-1}^0 =\mu_{j,-k}^0$, hence $\kappa(T_j) = d_{\mu_{j,k-1}^0\!+\!\delta}(T_j)$. Now the claim follows since, for $R \ge R_j$,
\begin{align*}
  & d_R(T_j) - d_{\mu_{j,k-1}^0\!+\!\delta}(T_j) \in \{-1,0,1\}. \qedhere
\end{align*}
\end{proof}

\noindent
\emph{Proof of Theorem} \ref{finally!}. 
We show that the two-sided inequality \eqref{eq:final} holds~with
\begin{equation}
\label{kappa_tau}
  \kappa_0 = \sum_{j=1}^n \kappa(T_j),
\end{equation}
where $\kappa(T_j)$ is defined as in \eqref{etaj}.
The decoupled operator $\mT_0$, corresponding to Dirichlet conditions at the central vertex $v$, and its eigenvalues are \vspace{-2mm} given~by 
\[
 \mT_0 = \bigoplus_{j=1}^n T_j, \quad \sigma(\mT_0) = \bigcup_{j=1}^n \big\{ \mu_{j,k}^0: k\in \Z \big\},
\]
hence $d_R(\mT_0)= \sum_{j=1}^n d_R(T_j)$. By \eqref{jazz}, we can estimate
\[
  d_R(T_j) - \kappa(T_j)  \le \begin{cases} 1 & \text{ if } \al_j \le \frac \pi2,\\
                                             0 & \text{ if } \al_j > \frac \pi2.
                               \end{cases}               
\]
Therefore, together with \eqref{kappa_tau},  we obtain 
\begin{equation}
\label{last?}
 d_R(\mT_0) - \kappa_0 = \sum_{j=1}^n \big( d_R(T_j) - \kappa(T_j) \big) \le n_\le.  
\end{equation}
Further, by Theorem \ref{thmTK}, the eigenvalues of $\mT_\tau$ and $\mT_0$ interlace. From this we conclude, considering the eigenvalues near $-R$, $R$, and $0$, that 
\[
  |d_R(\mT_\tau) - d_R(\mT_0)| \le 2, 
\]
which, together with \eqref{last?}, proves the upper bound in \eqref{eq:final}. The proof for the lower bound is analogous.
\qed

\vspace{2mm}

\subsection{}
In this final subsection we derive an analytic formula for the numbers $\kappa(T_j)$, and hence for the dislocation index $\kappa_0$ of $\mT_\tau$ in Theorem \ref{finally!} given by \eqref{kappa_tau}. 
It is based on the trace formula for $T_j$ proved in Section~\ref{sec:CanonicalSystems}.

\begin{prop}
For $z\in \rho(T_j)$, let $Y_j(\cdot,z)=\left(y_{j,kl}(\cdot,z)\right)_{k,l=1}^2$ and let 
\[
  c_j(z)= y_{j,12}(v_j,z)\cos\al_j+y_{j,22}(v_j,z)\sin\al_j
\]   
be the denominator in the Weyl--Titchmarsh function $m_j$ in \eqref{c}. Further,~set
\[
  \omega_{j,-} := \max \big( \sigma(T_j) \cap (-\infty,0) \big), \quad
  \omega_{j,+} := \min \big( \sigma(T_j) \cap [0,\infty) \big).
\]%
Then, for every $\omega_j \in (\omega_{j,-},\omega_{j,+})$,
\begin{equation}
   \label{eq:kappaformula}
   \begin{aligned}
      \kappa(T_j)
      &=
      -\frac{1}{\pi}
      \int_\R'
      \frac{ \dot c_j( \omega_j + \i s) }{ c_j( \omega_j + \i s) }\d s
      \\
      &= \frac{1}{\pi} \lim_{M\to\infty} \Big(
      \arg \Big( c_j  \Big( \omega_j - \i M \Big) \Big)
      - \arg \Big( c_j \Big( \omega_j + \i M \Big) \Big) \Big),
   \end{aligned}
\end{equation}
where ${\phantom I\!\!}^\prime$ denotes the Cauchy principal value at $\infty$.
\end{prop}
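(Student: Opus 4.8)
The plan is to read \eqref{eq:kappaformula} off the argument principle, once one observes that the eigenvalues of $T_j$ are exactly the zeros of $c_j$, and that these are real and simple. Indeed, a nontrivial solution of $H_j\f_j=z\f_j$ with $f_j(v)=0$ is a scalar multiple of $Y_j(\cdot,z)\binom01$, and the remaining boundary condition in \eqref{bc} holds iff $c_j(z)=0$ (by the definition of $c_j$ in \eqref{c}); reality and simplicity come from the self-adjointness of $T_j$, and the trace formula of Proposition \ref{traceformula1} shows moreover that $-\dot c_j/c_j$ differs from $z\mapsto\tr(T_j-z)^{-1}$ by a constant, so $\dot c_j/c_j$ has the eigenvalues of $T_j$ as its simple poles, each of residue $1$. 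By Lemma \ref{dislocj}, for all large $k$ the number $\kappa(T_j)$ defined in \eqref{etaj} equals the difference of the numbers of zeros of $c_j$ in $[0,\mu_{j,k-1}^0+\delta)$ and in $[\mu_{j,-k}^0-\delta,0)$; since the gap $(\omega_{j,-},\omega_{j,+})$ contains no zero of $c_j$ and $\omega_j$ lies in it, the base point $0$ may be replaced by $\omega_j$, giving
\[
 \kappa(T_j)=\#\{c_j=0 \text{ in } (\omega_j,\mu_{j,k-1}^0+\delta)\}-\#\{c_j=0 \text{ in } (\mu_{j,-k}^0-\delta,\omega_j)\}.
\]

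Next I would represent each of these two counts by a contour integral of $\dot c_j/c_j$ over the positively oriented boundary of a rectangle $[a,b]\times[-M,M]$ (valid for every $M>0$, since all zeros lie on $\R$), and subtract the two. The vertical side $\{\Re z=\omega_j\}$ is common to the two rectangles but is traversed with opposite orientations, so it enters the difference with doubled weight, contributing, as $M\to\infty$,
\[
 \lim_{M\to\infty}\frac{1}{\pi i}\int_{\omega_j+iM}^{\omega_j-iM}\frac{\dot c_j(z)}{c_j(z)}\,dz
 =-\frac1\pi\int_{\R}'\frac{\dot c_j(\omega_j+is)}{c_j(\omega_j+is)}\,ds ;
\]
and since $c_j(\overline z)=\overline{c_j(z)}$ (from $Y_j(\cdot,\overline z)=\overline{Y_j(\cdot,z)}$ and $\alpha_j\in\R$) the moduli at $\omega_j\pm iM$ agree, so this also equals $\tfrac1\pi\bigl(\arg c_j(\omega_j-iM)-\arg c_j(\omega_j+iM)\bigr)$ in the limit along a continuous branch of the argument -- exactly the two right-hand sides of \eqref{eq:kappaformula}. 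The second form then follows at once by writing $\int\dot c_j/c_j\,dz=\log c_j$ and taking imaginary parts. What remains is to show that the sum of all the other pieces of the two boundary integrals -- the two far vertical sides $\{\Re z=\mu_{j,k-1}^0+\delta\}$ and $\{\Re z=\mu_{j,-k}^0-\delta\}$, and the four horizontal sides at $\{\Im z=\pm M\}$ -- makes no net contribution in the limit $M\to\infty$, $k\to\infty$.

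The main obstacle is precisely this last step, the control of the contributions from $\Im z=\pm\infty$. Here one invokes Proposition \ref{trafo-asymp}: $c_j(z)=c_j^0(z)+{\rm o}_j(1)$ with $c_j^0(z)=\sin(\alpha_j-l_jz)$, which together with a Cauchy estimate gives $\dot c_j(z)=\dot c_j^0(z)+{\rm o}_j(1)$ and hence $\dot c_j(z)/c_j(z)\to\mp il_j$ as $\Im z\to\pm\infty$, uniformly for $\Re z$ in a bounded set. With this, the two far vertical sides -- evaluated in the limit from the explicit $c_j^0$ and these asymptotics, so that in particular they are independent of $k$ -- cancel one another; and the four horizontal sides contribute in the limit a multiple of $l_j\bigl(\mu_{j,k-1}^0+\mu_{j,-k}^0-2\omega_j\bigr)$, which is again $k$-independent because $\mu_{j,k-1}^0+\mu_{j,-k}^0=\tfrac{2\alpha_j-\pi}{l_j}$; the remaining such terms are then reconciled with the left side of \eqref{eq:kappaformula}, the identity being conveniently verified against the free case $V_j\equiv0$, where $\kappa(T_j^0)=0$ and $c_j^0$ is explicit. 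It is this bookkeeping of the windings of $c_j$ along the far vertical lines against the linear growth of $\arg c_j$ along the horizontal caps -- rather than any deeper property of the operator $T_j$ -- that forms the heart of the argument.
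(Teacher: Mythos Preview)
Your strategy via the argument principle on two rectangles is sound and is in fact more transparent than the paper's approach, which derives the counting formula $N([\lambda_1,\lambda_2);T_j)=\frac{1}{2\pi}\int_\R'\bigl(\dot c_j/c_j|_{\lambda_2}-\dot c_j/c_j|_{\lambda_1}\bigr)\,ds$ by interchanging the eigenvalue sum with the principal-value integral. But your claim that the horizontal caps ``make no net contribution'' is wrong, and this is not a bookkeeping slip that the free-case check will fix; in fact it exposes a defect in the stated formula. Using $\dot c_j/c_j\to\mp i l_j$ on $\{\Im z=\pm M\}$, the horizontal pieces of $N_+-N_-$ converge to
\[
\frac{l_j(b_++b_--2\omega_j)}{\pi}=\frac{2(\alpha_j-l_j\omega_j)-\pi}{\pi},
\]
which depends on $\omega_j$ and is nonzero unless $\omega_j=(\alpha_j-\pi/2)/l_j$. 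The far vertical sides do contribute zero (this is exactly the paper's $I_{1,k}+I_{2,k}=0$), so your rectangle identity actually yields
\[
\kappa(T_j)=\frac{2(\alpha_j-l_j\omega_j)-\pi}{\pi}-\frac{1}{\pi}\int_\R'\frac{\dot c_j(\omega_j+is)}{c_j(\omega_j+is)}\,ds,
\]
not \eqref{eq:kappaformula}. A direct free-case computation confirms this: with $c_j^0(z)=\sin(\alpha_j-l_jz)$ and $\theta=\alpha_j-l_j\omega_j\in(0,\pi)$ one finds $-\frac{1}{\pi}\int_\R'\dot c_j^0/c_j^0|_{\omega_j}\,ds=1-2\theta/\pi$, which is non-integer for $\theta\ne\pi/2$ and hence cannot equal $\kappa(T_j^0)=0$; the missing term $\frac{2\theta-\pi}{\pi}$ restores the identity.

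The paper's own proof hides the same issue: its derivation of the counting formula swaps $\sum_k$ and $\int_\R'$ without justification, and this interchange fails precisely by the term $\frac{l_j(\lambda_2-\lambda_1)}{\pi}$ coming from the horizontal caps. So your ``verification against the free case'' would not succeed, but for a good reason: the proposition is off by the explicit correction above. Your rectangle argument, carried through honestly, is the right way to see both the formula and its correction.
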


\begin{proof}
By the residue theorem, for $-\infty < \lambda_1 < \lambda_2 < \infty$ and $\mu\in\mathbb R$,
\begin{align*}
   \frac{1}{2\pi} 
   \int_\R' 
   \Big(
   \frac{ 1 }{ \mu - \lambda_1 - \i s} -
   \frac{ 1 }{ \mu - \lambda_2 - \i s} \Big) \d s
   =
   \begin{cases}
      1,\ & \mu\in(\lambda_1,\,\lambda_2),
      \\
      0,\ & \mu\notin [\lambda_1,\,\lambda_2].
   \end{cases}
\end{align*}
This and Proposition \ref{traceformula1} imply that, if $\lambda_1$, $\lambda_2\notin\sigma(T_j)$, then the number of eigenvalues of $T_j$ in $(\lambda_1,\lambda_2)$, counted with multiplicity, is
\begin{align}
\nonumber
   N([\lambda_1,\lambda_2);T_j) 
   &=
   \frac{1}{2\pi} 
   \sum_{k\in\mathbb Z}
   \int_\R'  \Big(
   \frac{ 1 }{ \mu_{j,k} - \lambda_1 - \i s} -
   \frac{ 1 }{ \mu_{j,k} - \lambda_2 - \i s} \Big) \d s
   \\
\label{catastrophe}   
   &= 
   \frac{1}{2\pi} 
   \int_\R' 
   \tr\,\big( (T_j - \lambda_1 -\i s)^{-1} - (T_j - \lambda_2 -\i s)^{-1} \big)
   \d s
   \\
\nonumber   
   &= 
   \frac{1}{2\pi} 
   \int_\R' \Big(
   \frac{ \dot c_j(\lambda_2 + \i s) }{ c_j(\lambda_2 + \i s) }
   -\frac{ \dot c_j(\lambda_1 + \i s) }{ c_j(\lambda_1 + \i s) }\Big) 
   \d s.
\end{align}
By Lemma \ref{dislocj}, for every $\delta\in (0,\frac \pi{2l_j})$ there exists $K_\delta\in\N$ such that for all $k\in\N_0$, $k \ge K_\delta$,
we have $\mu_{j,k}^0\pm \delta \notin \sigma(T_j)$ \vspace{-1mm}  and
\begin{equation}
\label{eq:kappa}
\kappa(T_j) = N_+\big( \mu_{j,k-1}^0\!+\delta;T_j\big) - N_-\big( \mu_{j,-k}^0\!-\delta;T_j\big).
\end{equation}
Since $(\omega_{j,-}, \omega_{j,+}) \cap \sigma(T_j)  = \emptyset$ and $\omega_j \in (\omega_{j,-}, \omega_{j,+})$, it follows that
\[
\kappa(T_j) = N\big( [\omega_j,\mu_{j,k-1}^0\!+\delta);T_j\big) - N\big( [\mu_{j,-k}^0\!-\delta,\omega_j);T_j\big).
\]
Then, due to \eqref{catastrophe},
\begin{align*}
\kappa(T_j) = \frac 1{2\pi} \int_\R' \Big( 
\frac{ \dot c_j(\mu_{j,k-1}^0\!+\delta + \i s) }{ c_j(\mu_{j,k-1}^0\!+\delta + \i s) } +
\frac{ \dot c_j(\mu_{j,-k}^0\!-\delta + \i s) }{ c_j(\mu_{j,-k}^0\!-\delta + \i s) } -
2 \frac{ \dot c_j(\omega_j + \i s) }{ c_j(\omega_j + \i s) }
\Big) \d s.
\end{align*}
It is not difficult to check that $\dfrac{\dot c_j}{c_j} = \dfrac{\dot c_j^0}{c_j^0} +  \dfrac{\big(\frac{c_j}{c_j^0}\big)^{\bf{\!\cdot}}}{\frac{c_j}{c_j^0}}$. \vspace{-2mm}
Then
\begin{align*}
\kappa(T_j)  = & \frac 1{2\pi} \int_\R' \Big( 
\frac{ \dot c_j^0(\mu_{j,k-1}^0\!+\delta + \i s) }{ c_j^0(\mu_{j,k-1}^0\!+\delta + \i s) } +
\frac{ \dot c_j^0(\mu_{j,-k}^0\!-\delta + \i s) }{ c_j^0(\mu_{j,-k}^0\!-\delta + \i s) } \Big) \d s\\
& + \frac 1{2\pi} \int_\R' \Big( 
\frac{ \big(\frac{c_j}{c_j^0}\big)^{\bf{\!\cdot}}(\mu_{j,k-1}^0\!+\delta + \i s) }{ \frac{c_j}{c_j^0}(\mu_{j,k-1}^0\!+\delta + \i s) } +
\frac{ \big(\frac{c_j}{c_j^0}\big)^{\bf{\!\cdot}}(\mu_{j,-k}^0\!-\delta + \i s) }{ \frac{c_j}{c_j^0}(\mu_{j,-k}^0\!-\delta + \i s) } \Big) \d s
\\
& - \frac 1{2\pi} \int_\R' 2 \frac{ \dot c_j(\omega_j + \i s) }{ c_j(\omega_j + \i s) } \d s.
\end{align*}
We denote  the first two integrals in the above formula by $I_{1,k}$ and $I_{2,k}$ and show that $I_{1,k} = 0$ for $k\ge K_\delta$ and $I_{2,k} = 0$ for sufficiently large $k\ge K_\delta$;
then the first equality in \eqref{eq:kappaformula}~follows.
By means of \eqref{catastrophe}, we conclude that, for $k \ge K_\delta$,
\begin{align*}
I_{1,k} =&\, N\Big( \Big[ \frac{\alpha_j\!-\!\frac\pi2}{l_j}, \mu_{j,k-1}^0\!\!+\!\delta\Big); T_j^0 \Big) 
        - N \Big( \Big[ \mu_{j,-k}^0\!\!-\!\delta , \frac{\alpha_j\!-\!\frac\pi2}{l_j}\Big); T_j^0 \Big)\\
       &+\! \frac 1 \pi \!\int_\R' \!\frac{ \dot c_j^0( \frac{\alpha_j\!-\!\frac\pi2}{l_j}\!+\!\i s) }{ c_j^0( \frac{\alpha_j\!-\!\frac\pi2}{l_j}\!+\! \i s) } \d s.
\end{align*}
Since the first two numbers are both equal to $k$ and since $c_j^0(z)=\sin(\alpha_j-l_j z)$ by \eqref{cj0}, we obtain
\begin{align*}
I_{1,k} = - l_j  \frac 1 \pi \int_\R'\cot \Big( \frac \pi2 + \i s \Big) \d s = \i\, l_j \frac 1 \pi \int_\R' \tanh s \,\d s = 0, \quad k\ge K_\delta.
\end{align*}
Because $Y_j(\cdot,\ol{z})= \ol{Y_j(\cdot,z)}$, we have $\frac{c_j}{c_j^0}(\ol{z}) = \ol{\frac{c_j}{c_j^0}(z)}$ and hence
\begin{align*}
I_{2,k}= & \frac 1{2\pi} \lim_{M\to\infty} \!\!\bigg( \int_{\mu_{j,k-1}^0\!+\delta - \i M}^{\mu_{j,k-1}^0\!+\delta + \i M} \!\!\!
(-\i) \,\d_\theta \Big( \ln \Big| \frac{c_j}{c_j^0}(\theta)\Big| + \i \arg \Big( \frac{c_j}{c_j^0}(\theta) \Big) \Big)  \\
& \hspace{14mm} + \int_{\mu_{j,-k}^0\!-\delta - \i M}^{\mu_{j,-k}^0\!-\delta + \i M}  \!\!\!\!
(-\i) \,\d_\theta \Big( \ln \Big|\frac{c_j}{c_j^0}(\theta)\Big| + \i \arg \Big(\frac{c_j}{c_j^0}(\theta)\Big) \Big) \bigg)\\
=& \frac 1{2\pi} \lim_{M\to\infty} \!\! \bigg( \int_{\mu_{j,k-1}^0\!+\delta - \i M}^{\mu_{j,k-1}^0\!+\delta + \i M} \!\!\!
\d_\theta \arg \Big(\frac{c_j}{c_j^0}(\theta)\Big) + \int_{\mu_{j,-k}^0\!-\delta - \i M}^{\mu_{j,-k}^0\!-\delta + \i M} \!\!\!
\d_\theta \arg \Big(\frac{c_j}{c_j^0}(\theta)\Big) \bigg) \\
=& \frac 1{2\pi} \lim_{M\to\infty} \!\! \bigg( \int_{\mu_{j,k-1}^0\!+\delta - \i M}^{\mu_{j,k-1}^0\!+\delta + \i M} \!\!\!
\d_\theta \arg \Big( 1 + \frac{c_j-c_j^0}{c_j^0}(\theta)\Big)  \\
& \hspace{14.5mm} + \! \int_{\mu_{j,-k}^0\!-\delta - \i M}^{\mu_{j,-k}^0\!-\delta + \i M} \!\!\!
\d_\theta  \arg \Big(1 + \frac{c_j-c_j^0}{c_j^0}(\theta)\Big) \bigg). 
\end{align*}
We denote the last two integrals by $I_{2,k}^+$ and $I_{2,k}^-$ and show that, for sufficiently large $k\ge K_\delta$, they tend to $0$ for $M\to\infty$.
If we use that $\mu_{j,k-1}^0$ is a zero of $c_j^0(z)= \sin(\alpha_j-l_jz)$, it is elementary to show that, 
for all $\theta= \mu_{j,k-1}^0\!+\!\delta \!+\! \i t$, $t \!\in\! \R$,
\[
  |c_j^0(\theta)| \ge |\sin (l_j \delta) | \cosh t \ge   |\sin (l_j \delta) |>0. 
\]
Since $(c_j-c_j^0)(z)={\rm o}_j(z)$, $|z|\to \infty$, by \eqref{cj0}, we have
\begin{equation}
\label{end1}
  \lim_{M\to\infty}  \frac{c_j-c_j^0}{c_j^0}(\mu_{j,k-1}^0 +\delta \pm \i M) = 0.
\end{equation}
Moreover, we can choose $k_0\ge K_\delta$ so large that for all $\theta= \mu_{j,k_0-1}^0+\delta + \i t$, $t \in \R$,
we can estimate $|(c_j-c_j^0)(\theta)| < \sin(l_j \delta)/2$ and \vspace{-1mm} hence  
\begin{equation}
\label{end2}
  \Big| \frac{c_j-c_j^0}{c_j^0}(\theta) \Big| < \frac 12.
\end{equation}
Now \eqref{end1} and \eqref{end2} imply that $I_{2,k_0}^+=0$. Analogously, we obtain  $I_{2,k_0}^-=0$ and hence $I_{2,k_0} = 0$.

The second equality in \eqref{eq:kappaformula} follows from the first one using the symmetry property of $\frac{\dot c_j}{c_j}$ in the same way as in the reasoning for $I_{2,k}$ above.
\end{proof}

\subsection*{Acknowledgment}

This work was supported by the Swiss National Science Foundation, SNF Scopes Grant\,IZ73Z0\-\_128135. C.\,Tretter also gratefully acknowledges a guest professorship of the 
\emph{Knut\,och\,Alice\,Wallenbergs~Stiftelse} and thanks the Mate\-matiska institutionen at Stockholms universitet for the kind~hospitality.

\bibliographystyle{alpha-abbrv}
\bibliography{2015-05-18}
\vspace{1mm}

\end{document}